\newtheorem{theorem}{Theorem}[section]
\newtheorem{corollary}{Corollary}[section]
\newtheorem{lemma}{Lemma}[section]
\newtheorem{proposition}{Proposition}[section]
\newtheorem{remark}{Remark}[section]
\newtheorem{assumption}{Assumption}[section]
\newenvironment{proof}[1][Proof]{\textbf{#1.} }{\ \rule{0.5em}{0.5em}}
\newcommand{\esssup}{{\mathrm{esssup}}}
\def\P{{\cal P}}
\newcommand{\bp}{\bar \pi_n}
\newcommand{\ident}{{\mathchoice {\rm 1\mskip-4mu l} {\rm 1\mskip-4mu l}
{\rm 1\mskip-4.5mu l} {\rm 1\mskip-5mu l}}}
\numberwithin{equation}{section}
\newcommand{\X}{{\cal X}}
\newcommand{\B}{{\cal B}}
\newcommand{\e}{\varepsilon}
\def\a{\alpha}
\newcommand{\ba}{\bar{\alpha}}
\newcommand{\x}{{\bf x}}
\newcommand{\p}{{\bar \pi}}
\newcommand{\supp}{\rm{supp\, }}
\newcommand{\F}{{\cal F}}
\title
{An evolution model with uncountably many alleles}
\author{Daniela Bertacchi\\
Dipartimento di Matematica e Applicazioni,\\
Universit\`a di Milano--Bicocca,\\
via Cozzi 53, 20125 Milano, Italy.\\
daniela.bertacchi\@@unimib.it
\and
J\"uri Lember \footnote{Estonian Research Council grant PRG865}\\
Institute of Mathematics and Statistics,\\
University of Tartu,\\
J. Liiv 2, 50409 Tartu, Estonia.\\
juri.lember\@@ut.ee
\and
Fabio Zucca \\
Dipartimento di Matematica,\\
Politecnico di Milano,\\
Piazza Leonardo da Vinci 32, 20133 Milano, Italy.\\
fabio.zucca\@@polimi.it}
\date{}
\begin{document}

\maketitle

\begin{abstract}
We study a class of evolution models, where the breeding process
 involves an arbitrary exchangeable process, allowing for mutations to appear.
The population size $n$ is fixed, hence after breeding, selection is applied. Individuals are
characterized by their genome, picked inside a set $\X$ (which may be uncountable), and there is a fitness
associated to each genome. Being less fit implies a higher chance of being discarded in the
selection process. 
The stationary distribution of the process 
%which takes into account the types of alive individuals, 
can be described and studied.
We are interested in the asymptotic behavior of this stationary distribution as 
%the population size
 $n$ goes to infinity.
Choosing a parameter $\lambda>0$ to tune the scaling of the fitness when $n$ grows, we prove limiting theorems
both for the case when the breeding process does not depend on $n$, and for the case when it is given by a Dirichlet process prior.
In both cases, the limit exhibits phase transitions depending on the parameter $\lambda$.
\end{abstract}

\textbf{Keywords:} Moran model, Dirichlet process, large population limit, weak convergence

\section{Introduction}
%---------
\paragraph{The model and setup.} We study the (uncountable) infinite alleles evolution model,
where the breeding and mutation is governed by an $\X$-valued
infinitely exchangeable process $\xi_1,\xi_2,\ldots$, we shall refer to $\xi$ as the breeding process. Throughout the paper $\X$
stands for the Polish (i.e. complete and separable metric) space of all alleles, the elements of $\X$ are
called (geno)types in sequel. By the de Finetti-Hewitt-Savage theorem,
the process $\xi$ is in one-to-one correspondence with a
probability measure $\pi$ on the Borel $\sigma$-algebra of all
probability measures $\P$ equipped with the topology of weak
convergence on $\X$, because for every $n\in \mathbb{N}$, and
$A_i\in \B(\X)$, $i=1,\ldots,n$,
\begin{equation}\label{xi}
{\bf P}(\xi_1\in A_1,\ldots, \xi_n\in A_n)=\int_{\P}\prod_{i=1}^n
q(A_i)\pi(dq),
\end{equation}
where $\B(\X)$ stands for Borel $\sigma$-algebra of $\X$ (see, e.g. \cite[Ch 3]{BNP2} or  \cite[Theorem 1]{cf:Regaz00}). Hence we
identify the process $\xi$ with $\pi$ (see Subsection \ref{sec:pre} for details).
The measure $\pi$ will be referred to as the prior measure. In evolution models theory, the most important and commonly
used prior is the law of Dirichlet process $DP(m,\ba)$, where $\ba$
is a (typically non-atomic) probability measure on $\X$ and $m>0$ is
so-called concentration or precision parameter. With this prior the
breeding process is the following: for every $n\geq 1$, $A\in
\B(\X)$, and population $x_1,\ldots,x_n$
$$P\big(\xi_{n+1}\in A|\xi_1=x_1,\ldots,\xi_n=x_n\big)={m\over
m+n}\ba(A)+{n\over m+n}\cdot {1\over n}\sum_{i=1}^n\delta_{x_i}.$$
If $x^*_1,\ldots,x_k^*$ are the distinct values of $x_1,\ldots,x_n$
with $n_1,\ldots,n_k$ being their frequencies, the conditional
distribution above can be interpreted as follows:
$$\xi_{n+1}|\xi_1=x_1,\ldots,\xi_n=x_n \sim \left\{
                                              \begin{array}{ll}
                                                \ba, & \hbox{with probability ${m\over m+n}$;} \\
                                                \delta_{x^*_j}, & \hbox{with probability ${n_j\over m+n}$\quad $j=1,\ldots,k$.}
                                              \end{array}
                                            \right.$$
(see, e.g. \cite{BNP3,BNP2}). This interpretation allows to obtain the sequence
$\xi_1,\xi_2,\ldots$ by a very simple procedure, known as the
generalized Polya urn scheme. The scheme is very easy to implement
making Dirichlet process priors popular in applications. When $\ba$
is non-atomic, then with probability $m/(m+n)$  the random variable
$\xi_{n+1}$ takes a new value that is not previously seen in
$x_1,\ldots,x_n$ -- a mutation. Hence the ratio $m/(m+n)$ can be
interpreted as the mutation probability. In the literature of
evolution models, the  Polya urn scheme with non-atomic $\ba$ is
often referred to as Hoppe urn (with $m$ typically denoted by
$\theta$), the only difference between
the two urns
is that in the
Polya urn the mutations are labeled and $\ba$ specifies their
origin. In particular, the celebrated Ewens sampling formula, along
with
its consequences, holds under Polya urn scheme, and therefore the
Dirichlet process prior is central in evolution theory.
\\\\
We start with a fixed population size $n$. The process $\xi$ models the breeding: given the
population $x_1,\ldots,x_n$, the new genotype $x_{n+1}$ is bred from the
 conditional
distribution of $\xi_{n+1}$ given $\xi_i=x_i$, $i=1,\ldots,n$, denoted as
$P_{\xi}(\cdot|x_1,\ldots,x_n)$.
Observe that since the order in the population is
arbitrary, exchangeability is a natural assumption about the
breeding process $\xi$. After a new individual with
genotype $x_{n+1}$ is born, it either replaces an already existing
member of the population, or it is discarded, and the population remains
unchanged. The probability that $x_{n+1}$ is kept in the population
depends on the fitnesses of all population members. So, in what
follows, let $w:\X\to \mathbb{R}^+$ be a bounded continuous strictly
positive function, assigning a fitness to every type.  The bigger
$w(x_{n+1})$, the more likely that a newborn member replaces an
already existing one in the population. There are several selection
schemes possible. In
\cite{article1}, the following schemes were introduced.\\\\
%-----------
{\bf Single tournament selection:}
\begin{enumerate}
\item Sample $ x_{n+1} \sim  P_\xi( \cdot \mid x_1, \ldots, x_n ) $
\item Sample $i$ randomly from $\{ 1, \ldots, n\}$
\item With probability $\frac{w(x_{n+1})}{w(x_i) + w(x_{n+1})}$ replace $x_i$ with $x_{n+1}$ and discard $x_i$, otherwise discard $x_{n+1}$.
\end{enumerate}
%----------------
{\bf Inverse fitness selection:}
\begin{enumerate}
\item Sample $ x_{n+1} \sim  P_\xi( \cdot \mid x_1, \ldots, x_n ) $
\item Sample $i$ from  $\{1,\ldots,n+1\}$ with probabilities proportional to $\{ \frac{1}{w(x_1)}, \ldots , \frac{1}{w(x_{n+1})}\}$
\item If $i<n+1$, then replace $x_i$ by $x_{n+1}$.
\end{enumerate}
Both
selection schemes define a Markov kernel on $\X^n$. Lemma
\ref{lemma:rev} below proves that both kernels satisfy the detailed
balance equation with stationary measure
\begin{equation}\label{pn} P_n(A):={1\over Z_n}\int_A \prod_{j=1}^n
w(x_j)P^n_{\xi}(d\x),\quad A\in \B(\X^n),
\end{equation}
where $P^n_{\xi}$ is the law of $(\xi_1,\ldots,\xi_n)$,
$\x=(x_1,\ldots,x_n)\in \X^n$ and $Z_n$ is the normalizing constant.
Hence the stationary (or limit) distribution of the genotypes in
$n$-elemental population has clear and explicit closed form,
depending solely on
$w$ and $\pi$.\\\\
%---------------------------------
The measure $P_n$ in \eqref{pn} is the main object of interest. The article focuses on
 the limit of $P_n$ when the population size $n$ grows
and the fitness function $w_n$ and prior $\pi_n$ both might depend on $n$. As
such, the question is incorrect, because $P_n$ is defined on
different domains $\X^n$. To overcome that problem, we consider two
approaches in parallel:
\begin{itemize}
                         \item The first approach is to consider the triangular array of random
variables
$$(X_{1,n},\ldots,X_{n,n})\sim P_n,\quad n=1,2,\ldots $$
and ask: is there a limit stochastic process $X_1,X_2,\ldots$ such that
for every $m$ and for every $m$-tuple of integers
$t_1,t_2,\ldots,t_m$, it holds (as $n\to \infty$) that
\begin{equation}\label{fi-con}
(X_{t_1,n},X_{t_2,n},\ldots,X_{t_m,n})\Rightarrow
(X_{t_1},X_{t_2},\ldots,X_{t_m}).\end{equation}
 If such a limit
process exists, it can be considered as an approximation of the
population $(X_{1,n},\ldots,X_{n,n})$ for big $n$. Theorem
\ref{thmP} provides the main
technical tool for proving the convergence \eqref{fi-con} and,
hence, the existence of the limit
process.
%------------------
                         \item The second approach is to transfer the measures $P_n$
into the measures $Q_n$ on $\P$. For that we define the mapping $g$ that
maps a vector $\x$ to its empirical measure:
\begin{equation}\label{defg}
g:\X^n\to \P,\quad g(\x)={1\over n}(\delta_{x_1}+\cdots
+\delta_{x_n})
\end{equation}
and we define  $Q_n$ as $P_ng^{-1}$, i.e. the pushforward measure
\begin{equation}\label{eq:Qn}
Q_n(E):=P_n\big(g^{-1}(E)\big),\quad E\in \B(\P).
\end{equation}
In other words, $Q_n$ is the distribution of $g(X_1,\ldots,X_n)$,
where $(X_1,\ldots,X_n)\sim P_n$. Since $P_n$ is invariant with
respect to permutations, i.e. the $n$-dimensional random vector
having distribution as $P_n$ is exchangeable, we see that $P_n$ can
be uniquely restored from $Q_n$, so in a sense they are the same. In
Subsection \ref{sec:PandQ}, we shall argue that $g$ is measurable, i.e.
$Q_n$ is well defined. Since the measures $Q_n$ are defined on the
same domain $\P$, the question now is the existence of a limit measure
$Q^*$ such that $Q_n\Rightarrow Q^*$  (the weak convergence). The
$Q_n$-counterpart of Theorem \ref{thmP} is Theorem \ref{thmQ} that
provides necessary conditions  in terms of $w_n$ and $\pi_n$ for
existence of $Q^*$. Since the proof of Theorem \ref{thmQ} is based
on large deviation inequality, an additional assumption that $\X$ is
compact is imposed.
                       \end{itemize}
%--------------
%--------------
\paragraph{The phase transition results.} For the first convergence
results in Section \ref{sec:pi} we consider the case where the prior is arbitrary and independent of $n$, $\pi_n=:\pi$.
As weight functions, we take
\begin{equation}\label{wn}
w_n(x)=\exp\left[-{\phi(x)\over n^{\lambda}}\right],
\end{equation}
 where $\lambda\geq 0$
and the function $\phi(x)$ is nonnegative, continuous  and bounded.
The parameter $\lambda$ controls how fast the differences between
fitness functions $w_n$ vary when $n$ increases. The case
$\lambda=0$ corresponds to the case $w_n=w$ for every $n$.
%\textcolor{blue}{
Observe that $P_n$ in \eqref{pn} is invariant with respect to multiplying $w$ by a positive constant, hence when $w$ is bounded from above, there is no loss of generality in taking it to be bounded by 1 as \eqref{wn} implies.
%}
Let us note that $\lambda=1$ can be encountered in the literature, at least in the two-allele model
($|\X|=2$). Indeed often, in that case, fitness is taken as $1$ and $(1+s)$ where
$s\cdot n\to \gamma$, so $(1+s)\approx \exp[\gamma/n]$, which corresponds to
$\lambda=1$. We shall see from the limiting results as $n$ goes to infinity,
that $\lambda=1$ is, in a sense, the right scaling. \\\\
%---------
Theorems \ref{thm1} and \ref{thm12} are the main phase transition
theorems for arbitrary $\pi$. The results of both theorems
can summarized as follows:
\begin{description}
  \item[The case $\lambda>1$:] Then \eqref{fi-con} holds with the limit
  process being equal to the breeding process $\xi$ and $Q_n\Rightarrow \pi$. This means that
  when $\lambda>1$, then the differences between
  fittnesses vanish so quickly that  the selection has
  no influence in the limit.
%------------------
  \item[The case $\lambda=1$:] Then \eqref{fi-con} holds with the limit
  process being an infinitely exchangeable process with prior
  measure ${\bar \pi}$ (specified in Theorem \ref{thm1}) that depends on $\phi$ and $\pi$ and is
  different from $\pi$. Then also $Q_n\Rightarrow \p.$
%------------------------------------------
  \item[The case $\lambda\in [0,1)$:] In this case we impose an additional mild
  assumption on $\phi$ (that in particular guarantees the uniquess of the minimum $x_o$), and
  we also assume that the support of $\pi$ contains $\delta_{x_o}$.
  Then
  \eqref{fi-con} holds with the limit
  process being degenerate with one possible path $x_o,x_o,\cdots$
  and $Q_n\Rightarrow \delta_{q^*}$, where $q^*=\delta_{x_o}$. This
  means that when $\lambda<1$ then the selection is so strong that breeding has no influence in the limit and only the
  fittest type $x_o$ (that maximizes $w_n$ for every $n$) survives.
\end{description}
In Section \ref{sec:dir} we consider the case when the prior measure is
the law of DP($m_n,\ba$), but the fitness function is still \eqref{wn}.
We let the concentration parameter $m_n$ depend on the
same parameter $\lambda$ as follows: $m_n=cn^{1-\lambda}$, where
$c>0$. When $\lambda=1$, then $m_n=c$, and therefore $\pi$  is
independent of $n$, hence this case is the  case
considered above. However, the case $\lambda\in [0,1)$ needs special
treatment. Observe that when $\lambda=0$, then $w_n=w$ and the
mutation probability $m_n/(n+m_n)=c/(1+c)$ is independent of the
population size
$n$ and this makes the case $\lambda=0$ appealing. \\\\
%----------
The results with Dirichlet process prior, Theorems  \ref{thm1a} and \ref{thm1b} can be summarized as follows,
the additional assumptions are that  $\X$ is compact, $\ba$ has full support  and $x_o$ is the unique minimizer of $\phi$.
\begin{description}
  \item[The case $\lambda=0$:] Then \eqref{fi-con} holds with the limit
  process being an iid process with  $X_i\sim r^*$ and  $Q_n\Rightarrow \delta_{r^*}$. The measure $r^*$ depends on the inequality
\begin{equation}\label{marta0}
\int_\X {{w}(x_o)\over w(x_o)-w(x)}\ba(dx)\geq {1+c\over c}.
\end{equation}
When \eqref{marta0} holds, then $r^*$ has  density $r^{*}(x)$
with respect to $\ba$:
$$r^{*}(x)={c w(x)\over \theta(1+c)-w(x)},$$
where
%\textcolor{red}{\\$\theta>0$ is a parameter. CHANGED BECAUSE if you say parameter it seems that it is something you can change, while once $w, \ba, c$ are fixed, so is $\theta$\\}
%\textcolor{blue}{
$\theta>0$ depends on $w$, $\ba$ and $c$ (see Lemma \ref{lemma1b}).
 Observe that the density is with respect to $\ba$-measure, so  when $\ba$ has an atom, then  $r^*$ (i.e. the limit population) has the same atom, but its mass is
re-weighted. But when \eqref{marta0} fails, then
$r^{*}$ has an absolutely continuous part (with respect to $\ba$) with density
$$r_a^{*}(x)={cw(x)\over w(x_o)-w(x)}$$
but also an atom at $x_o$. When \eqref{marta0} fails, then $\ba$ cannot have an atom at $x_o$, but $w$ is peaked so heavily in neighborhood of $x_o$ that in the limit measure an atom appears. Thus, when \eqref{marta0} fails, then in the limit population there is a fixed proportion of individuals with the fittest type $x_o$.

%---------------
  \item[The case $\lambda\in (0,1)$:] Then \eqref{fi-con} holds with the limit
  process being an iid process with  $X_i\sim q^*$ and $Q_n\Rightarrow q^*$. The measure $q^*$ depends on the inequality
  \begin{equation}\label{marta20}
\int_\X {1\over \phi(x)-\phi(x_o)}\ba (dx)\geq {1 \over c}.
\end{equation}
When \eqref{marta20} holds, then $q^*$ has density $q^{*}(x)$
with respect to $\ba$:
$$q^{*}(x)={c\over \phi(x)-c-\theta},$$
where $\theta>0$ is a parameter. When \eqref{marta20} fails, then  $q^*$ has an absolutely continuous part
(with respect to $\ba$) with density
$$q_a^{*}(x)={c\over \phi(x)-\phi(x_o)}$$
but also an atom at $x_o$. Thus, when \eqref{marta20} fails, hence $\ba(x_o)=0$, in the limit an atom at $x_o$ is created.
\end{description}
What is remarkable that in the last case the limit measure is independent of $\lambda\in (0,1)$.
%---------------------
\paragraph{The case of finite $\X$ and the relation with previous work.}
The literature on mathematical population genetics is vast and focuses on various aspects of the evolution of
traits within a population (see \cite{cf:ewens} and references therein). A common feature of these models is the fact that
individuals, characterized by their genome (or ``type'', ``trait''), breed and die. Mutations may occur and a fitness be associated to each type. The population can be modelled as having a varying or a fixed size. In the first case the process is usually a birth-death process and one can focus either on the equilibrium when time grows, or on the trajectories. For instance the asymptotic distribution of fitnesses
is studied by \cite{cf:Schi, cf:ben-ari, cf:BLZ} in an evolution scheme where a random number of least fit individuals die at each generation, while \cite{cf:BZA, cf:IL} study the effect of random/deterministic events on this distribution;
\cite{cf:BCMel} and \cite{cf:BSW21} study the convergence of the evolutionary process as the population size goes to infinity.
The population is assumed to have a constant size in classical models such as the Wright-Fisher and the Moran models, but
even with this assumptions there are still many theoretical challenges and applications (see for instance
\cite{cf:Durrett-mayberry, cf:Schw1, cf:Schw2}).\\
In our model, the population has constant size $n$, breeding is conditional sampling from a very general exchangeable process
and selection takes place at death (where less fit individuals are more likely to be removed). We are interested in the asymptotic
behaviour, as the size of the population increases, of the stationary distribution of types.\\\\
%----------
The case of finitely many types $|\X|=K<\infty$ was treated in \cite{article1}. Then $\P$ is just a simplex and when $\pi$ is the Dirichlet distribution ${\rm Dir}(\a_1,\ldots,\a_K)$, then the breeding process can be considered
as a version well known Moran model  without selection (see \cite[Sec 2.1]{article1}).  When the selection (either single tournament or inverse fitness) is added, then we end up with a version of Moran model with breeding and selection. The stationary measure $P_n$ in this case ($\pi$ equals to Dirichlet distribution) in terms of allele counts is as follows
\begin{equation}\label{cnts2}
P_{n}(n_1,\ldots,n_K)={1\over Z_n}{n!\over n_1!\cdots n_K!}{(\alpha_1)_{n_1}\cdots (\alpha_K)_{n_K}\over (|\alpha|)_n}w^{n_1}(1)\cdots w^{n_K}(K),
\end{equation}
where $(\a)_{n}=\alpha(\a+1)\cdots(a+n-1)$, $|\a|=\a_1+\cdots+\a_K$ and $n_k\geq 0$ stands for the number of type $k$ in $(x_1,\ldots,x_n)$, thus $n_1+\cdots+n_K=n$. Since $P_n$ is exchangeable, it can be equally presented in terms of counts and in the literature it is typically done so. As pointed out in \cite{article1} there are many other versions of Moran models leading the same stationary distribution (the parameters $\a$ are obtained then from mutation probabilities). Hence the case $\pi={\rm Dir}(\a_1,\ldots,\a_K)$ is important special case also in finite allele model.
\\\\
Although one may argue that in reality the set of types $\X$ is always finite, we emphasize that its cardinality is much larger than the
population size itself. Thus, since we are considering limits when the population size grows to infinity, it is reasonable to assume that
$\X$ is infinite, which poses some difficulties in the treating.
For finite $\X$, the limit theorems of this paper hold true as well (they are just a special case), but the proofs are much simpler. Hence in a sense the current article can be considered as a generalization of  \cite{article1}, but the generalization is far from being trivial. For general $\X$, a new machinery needs to be built, and it is the purpose of the current paper. The difference between general and finite $\X$ is well illustrated by the Dirichlet process results (Theorems \ref{thm1a} and \ref{thm1b}). In finite case, since $\alpha_k>0$ for every $k=1,\ldots,K$, the equalities  \eqref{marta0} and \eqref{marta20} both hold. In the finite case the limit probability measures
$r^*$ and $q^*$ are elements of simplex (thus $K$-dimensional vectors) as follows:
$$r^*(k)={w(k)\alpha_k\over \theta(1+|\a|)-w(k)},\quad q^*(k)={\alpha_k\over\phi(k)+|\a|-\theta},\quad k=1,\ldots,K$$
where in both formulas $\theta$ is a parameter. Since in the
 finite case $c=|\a|$ and $\ba_k={\alpha_k\over |\a|}$, we see that these measures are indeed the same as given by Theorems \ref{thm1a} and \ref{thm1b}. However, quite surprisingly for general $\X$ the additional atom appears. This is something one cannot predict based solely on the results of \cite{article1}. Also the proofs  Theorems \ref{thm1} and \ref{thm12} for general $\X$ are essentially different from the ones in the case of finite $\X$, they are based on large deviation result and therefore the additional assumption of compactness is needed.  We also would like to stress out that all limits in \cite{article1} as  well as all limits in the in the current paper are obtained without diffusion approximation. For example, when $\X$ is finite, $\pi$ is Dirichlet distribution and
 $\lambda=1$, the the limit density -- sometimes called as Wright's formula -- can be found in the literature
(for references, see \cite[Sec 3.4]{article1}), typically connected to the diffusion approximation. However,  the proof in \cite{article1} uses fairly simple mathematics and no diffusion approximation, the generalization to general $\X$ (Theorem \ref{thm1} in the current article) uses more involved mathematics, but again no diffusion approximation.
%-----------
\paragraph{Outline of the paper.} In Section \ref{sec:pre}, the model and the main objects of the article,
% -- %the model, the measures $P_n$ and $Q_n$,
are formally defined. In Subsection \ref{subsec:det}, the detailed balance equation is proven showing that $P_n$ is indeed the stationary measure of the model (with population size $n$).
In Subsection \ref{sec:PandQ} we give  an alternative representation of $P_n$ and define the measure $Q_n$.
In Section \ref{sec:limit}, the limit process (infinite population) and the sense of convergence are defined. The main results of Section \ref{sec:limit} are Theorems \ref{thmP} and \ref{thmQ}. The first of them proves the existence of the limit process under rather general assumptions and the second theorem shows there also exists a limit measure $Q$ such that $Q_n \Rightarrow Q$ (under the additional assumption that $\X$ is compact). These theorems are the basis of the  paper. Section \ref{sec:pi} is devoted to the case when the prior measure $\pi$ is arbitrary but independent of $n$. The main results of that section are Theorems \ref{thm1} and \ref{thm12}; these two theorems together  give the first  phase transition result as described in Introduction. In Section \ref{sec:dir}, the Dirichlet process prior is considered. The main results are Theorem \ref{thm1a} and \ref{thm1b} which provide phase transition results for that case. The proofs of these theorems are rather technical and therefore they are presented in Appendix.
%-------------------
\section{Preliminaries}\label{sec:pre}
%-----------
%\subsection{Preliminaries}\label{sec:pre}
Recall that ${\cal P }$ stands for the set of all probability measures on Borel $\sigma$-algebra ${\cal B}(\X)$. In what follows, we shall denote the elements of ${\cal P}$ by
 $q$. For any integrable function $f$ on $\X$, we shall denote by
 $$\langle f,q \rangle:=\int_\X f(x)q(dx).$$
 The set ${\cal P }$ is equipped with Prokhorov metric and so this is a complete separable metric
 space as well, see (\cite[p. 72]{Billingsley}). Prokhorov metric
 metrizes the weak convergence of probability measures, denoted by $q_n\Rightarrow q$ in the sequel, and the Borel
 $\sigma$-algebra   ${\cal B}({\cal P })$
 is such that for any $A\subset {\cal B}(\X)$, the mapping $q\mapsto
 q(A)$ is ${\cal B}({\cal P })$-measurable  (see e.g.
% \cite[Prop 3.1]{BNP},  
\cite[Prop A.5]{BNP3}). Also for any continuous bounded function $f$ on
 $\X$, the function $q\mapsto \langle f,q \rangle$ is continuous and hence ${\cal B}({\cal P })$-measurable as
 well. In what follows, we shall also see $n$-fold product measures
$q^n$ on $\B(\X^n)$. Since $q\mapsto q(A)$ is measurable for every
$A\in \B(\X)$, then also $q\mapsto q^n(A)$ is measurable for every
$A\in \B(\X^n)$. This follows from Dynkin's $\pi-\lambda$ theorem:
clearly for any cylinder $A=A_1\times \cdots \times A_n$ the mapping
$q\mapsto q^n(A)=q(A_1)\cdots q(A_n)$ is measurable (as a product of
measurable functions). The set $\Lambda=\{A\in \B(\X^n): q\mapsto
q^n(A) \text{  is measurable  }\}$ is a $\lambda$-system (i.e.
contains $\X^n$ and closed with respect to complements and disjoint
unions) containing all cylinders. Since $\X$ is Polish, the
cylinders generate $\B(\X^n)$, and so by Dynkin's $\pi-\lambda$
theorem $\B(\X^n)\subset \Lambda$.\\\\
%---------------
To see the one-to-one correspondence between breeding process $\xi$ and measures $\pi$, observe that for every measure $\pi$, there is a process $\xi$ satisfying equation~\eqref{xi}. Indeed, for
any $A\in \B(\X^n)$,
the map $q\mapsto q^n(A)$ is integrable then
${P}_{\xi}^n(A)=\int_\X
 q^n(A)\pi(dq)$ exists and the family $\{{P}_{\xi}^n\}$ satisfies
Kolmogorov's consistency conditions. The claim follows from
\cite[Theorems 12.7 and 15.26]{cf:Alip}. The other direction -- to every  exchangeable $\xi$ there corresponds a measure $\pi$ -- follows from Finetti-Hewitt-Savage representation. Recall that
$P_{\xi}(\cdot|\x)$ stands for the
 conditional distribution of $\xi_{n+1}$. The existence of a regular version for the conditional probability in Polish spaces is a consequence of
\cite[Theorem 7.8]{cf:PP1966}. Indeed it is enough that the $\sigma$-algebra contains a sub-$\sigma$-algebra which is separable (generated by a countable collection of sets) and the probability measure is compact approximable. Both conditions hold for a probability measure on the Borel $\sigma$-algebra of a Polish metric space: the first one is trivial and the second one follows from
\cite[Theorem 11.20]{cf:Alip}.
%-----------------------------------------------------
%-------------
\subsection{Detailed balance equation}\label{subsec:det}
\paragraph{Kernels.} Recall the two selection schemes: single tournament and inverse fitness. Both define a Markov chain with uncountable state space $\X^n$.
When $\X$ is finite, as in \cite{article1}, then the corresponding
transition matrix is easy to define. We now define the corresponding
transition kernels for both schemes. Recall that $w$ is a strictly positive, bounded and continuous fitness function on $\X$.\\\\
%---------
Let, for every $A\in \B(\X^n)$, $\x\in \X^n$ and $k=1,\ldots,n$
$$A_k(\x):=\{x\in \X: (x_1,\ldots, x_{k-1},x,x_{k+1},\ldots, x_n)\in
A\}.$$
%-----------
Observe that when $A=A_1\times \cdots \times A_n$ is a cylindrical
set, then
$$A_k(\x)=\left\{
            \begin{array}{ll}
              A_k, & \hbox{when $x_j\in A_j$, for every $j\in \{1,\ldots,k-1,k+1,\ldots,n\}$;} \\
              \emptyset, & \hbox{else.}
            \end{array}
          \right.
$$
%-------------
The transition kernel corresponding to the single tournament selection
is
\begin{align*}
P(\x,A)=&{1\over n}\sum_{k=1}^n P_k(\x,A),\quad {\rm where}\\
P_k(\x,A)
%=P_k(\x,A_k(\x))&
:=&\int_{A_k(\x)}{w(x)\over w(x)+w(x_k)}
P_{\xi}(dx|\x)+ b_k(\x)\delta_{x_k}(A_k(\x)),\\
b_k(\x):=&\int_{\X}{w(x_k)\over w(x)+w(x_k)}P_{\xi}(dx|\x).
\end{align*}
Here $b_k(\x)$ is the probability that a newborn individual $x_{n+1}$
looses the tournament to $x_k$. Hence, the first term of $P_k(\x,A)$ is
the probability that $x_{n+1}$ wins over $x_k$ and is born in
$A_k(\x)$; the second term is the probability that $x_{n+1}$ looses
the tournament to $x_k$. Clearly $P_k(\x,\cdot)$ a probability
measure on $\B(\X^n)$. The weight $n^{-1}$ represents the fact that
all individuals in population have equal probability to be picked
for the tournament. We observe that, applying Dynkin Theorem, one can prove that
$x\mapsto P_k(\x,A)$ is
measurable for every $A\in \B(\X^n)$.\\\\
%--------------
The transition kernel corresponding to the inverse fitness selection
is
\[
\begin{split}
 \widetilde P(\x, A)&=\sum_{k=1}^n \widetilde P_k(\x,A)+
  c(\x)\delta_\x(A), \quad \text{where}\\
 \widetilde P_k(\x,A) &:=
 \int_{A_k(\x)}
 \frac{1}{\sum_{j=1}^{n+1} w(x_k)/w(x_j)}
P_\xi(dx_{n+1}| \x)\\
  c(\x) &:= \int_{\mathcal{X}}
  \frac{1}{\sum_{j=1}^{n+1} w(x_{n+1})/w(x_j)}
P_\xi(dx_{n+1}| \x).
   \end{split}
\]
Here $c(\x)$ is the probability that $x_{n+1}$ is chosen and so nothing is changed, the first term in  $\widetilde P_k(\x,A)$ is the probability that $x_k$ is chosen and newborn $x_{n+1}$ is in $A_k(\x)$.
%--------------
\paragraph{Reversibility.}
The following lemma shows
the $P_n$, defined in \eqref{pn}, is the stationary measure for both  single tournament and inverse fitness kernel,
and the stationary process is reversible.
%----------
\begin{lemma}\label{lemma:rev} Let $P(\x,A)$ be the transition kernel corresponding
to the single tournament selection (resp.~to the inverse fitness selection). Then, for every $B,A\in \B(\X^n)$,
it holds
\begin{equation}\label{dbe}
\int_B P(\x,A)P_n(d\x)=\int_A P(\x,B)P_n(d\x).
\end{equation}
\end{lemma}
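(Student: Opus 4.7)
The plan is to verify the detailed balance equation piece by piece, exploiting the exchangeability of the underlying breeding process $\xi$. Write $\mu_{n+1}$ for the joint law of $(\xi_1,\ldots,\xi_{n+1})$, so that by de Finetti
\[
P_\xi(dx_{n+1}\mid\x)\,P^n_\xi(d\x)=\mu_{n+1}(dx_1,\ldots,dx_{n+1}),
\]
and $\mu_{n+1}$ is invariant under any permutation of its $n+1$ coordinates. For the single tournament kernel it suffices to prove \eqref{dbe} separately for each $P_k$; summing on $k$ and dividing by $n$ gives the statement for $P$. Similarly, for the inverse fitness kernel I will handle each $\widetilde P_k$ and then the self-loop term separately.

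For the main (off-diagonal) piece of $P_k$, I would rewrite
\[
\int_B P_k(\x,A)\,P_n(d\x)=\frac1{Z_n}\int_{\X^{n+1}} \mathbf{1}_B(\x)\,\mathbf{1}_A(\x^{(k\to x_{n+1})})\,\frac{w(x_{n+1})\prod_{j=1}^n w(x_j)}{w(x_{n+1})+w(x_k)}\,\mu_{n+1}(d\x,dx_{n+1}),
\]
where $\x^{(k\to u)}$ denotes $\x$ with its $k$-th coordinate replaced by $u$. The weight factor $w(x_{n+1})\prod_{j=1}^n w(x_j)=w(x_{n+1})w(x_k)\prod_{j\neq k}w(x_j)$ and the denominator $w(x_{n+1})+w(x_k)$ are both symmetric in the pair $(x_k,x_{n+1})$, and $\mu_{n+1}$ is exchangeable, so applying the transposition $x_k\leftrightarrow x_{n+1}$ inside the integral leaves everything invariant except the indicators, which swap into $\mathbf{1}_A(\x)\,\mathbf{1}_{B_k(\x)}(x_{n+1})$. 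Collapsing this back via $\mu_{n+1}=P_\xi(\cdot|\x)P^n_\xi(d\x)$ yields exactly $\int_A P_k(\x,B)P_n(d\x)$. The self-loop term $b_k(\x)\delta_{x_k}(A_k(\x))=b_k(\x)\mathbf{1}_A(\x)$ contributes $\int_{A\cap B} b_k(\x)P_n(d\x)$ on both sides and is trivially symmetric.

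For the inverse fitness kernel the same strategy applies. The off-diagonal term $\widetilde P_k(\x,A)$ carries the factor $\frac{1/w(x_k)}{\sum_{j=1}^{n+1}1/w(x_j)}$; combined with $\prod_{j=1}^n w(x_j)$ from $P_n$, this becomes $\frac{\prod_{j\neq k,\,j\leq n} w(x_j)}{\sum_{j=1}^{n+1} 1/w(x_j)}$. The denominator is symmetric in all $n+1$ coordinates, and the numerator does not involve either $x_k$ or $x_{n+1}$ at all, so the weight is invariant under the swap $x_k\leftrightarrow x_{n+1}$; the indicators swap as before, and exchangeability of $\mu_{n+1}$ delivers the corresponding identity. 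The self-loop term $c(\x)\delta_\x(A)$ again contributes a symmetric $\int_{A\cap B}c(\x)P_n(d\x)$ on both sides.

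The only non-routine point is bookkeeping the swap cleanly, together with justifying Fubini for the rewriting of $\int_B P_k(\x,A)P_n(d\x)$ as an integral against $\mu_{n+1}$; this reduces to the measurability of $\x\mapsto P_k(\x,A)$ and $\x\mapsto\widetilde P_k(\x,A)$ (already noted in the excerpt via Dynkin's $\pi$--$\lambda$ theorem) and to boundedness of the integrands, which follows from $w$ being bounded and bounded away from $0$. No integrability issue arises and no extra assumption on $\pi$ is needed. In particular $P_n$ is stationary: taking $B=\X^n$ in \eqref{dbe} gives $\int P(\x,A)P_n(d\x)=P_n(A)$ for either kernel.
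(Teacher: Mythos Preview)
Your proposal is correct and follows essentially the same route as the paper: split into the $k$-indexed pieces, pass from $P_\xi(\cdot\mid\x)P^n_\xi(d\x)$ to the exchangeable joint law $P^{n+1}_\xi$, observe that the weight is symmetric in $(x_k,x_{n+1})$, and apply the transposition $x_k\leftrightarrow x_{n+1}$; the self-loop terms contribute the symmetric $\int_{A\cap B}(\cdots)P_n$ on both sides. The only cosmetic difference is that the paper first reduces to cylinder sets $A=A_1\times\cdots\times A_n$, $B=B_1\times\cdots\times B_n$ and carries out the swap via the explicit sets $BA_k$, $AB_k$, whereas you work directly with indicators of arbitrary measurable $A,B$; your formulation is slightly cleaner but the argument is the same. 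One minor remark: you invoke ``$w$ bounded away from $0$'' for integrability, but this is not assumed in the paper and is not needed---the relevant integrands $\frac{w(x_{n+1})}{w(x_k)+w(x_{n+1})}$ and $\big(\sum_j w(x_k)/w(x_j)\big)^{-1}$ are bounded by $1$ regardless, so boundedness of $w$ alone suffices.
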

%----
\begin{proof} It suffices to prove \eqref{dbe} if $A$ and $B$ are
both cylinders: $A=A_1\times \cdots \times A_n$, $B=B_1\times \cdots
\times B_n$.\\
Let us consider the single tournament selection kernel. For any fixed $k$
\begin{align*}
&\int_B P_k(\x,A%_k(\x)
)P_n(d\x)=\\
&\frac{1}{Z_n}\int_B \int_{A_k(\x)}{w(x_{n+1})\over
w(x_{n+1})+w(x_k)}P_{\xi}(dx_{n+1}|\x)w(x_1)\cdots w(x_n)P^n_{\xi}(d\x)+\int_{A\cap B} b_k(\x)P_n(d\x)=\\
%---------
&\frac{1}{Z_n}\int_B \int_{A_k(\x)}{w(x_1)\cdots w(x_n)w(x_{n+1})\over
w(x_{n+1})+w(x_k)}P^{n+1}_{\xi}(d\x,dx_{n+1})+\int_{A\cap B} b_k(\x)P_n(d\x)=(\$).
\end{align*}
Now, if we define
\begin{align*}
BA_k&:=\{\x \in \mathcal{X}^{n+1}\colon (x_1, \ldots, x_n) \in B, (x_1, \ldots,x_{k-1}, x_{n+1},x_{k+1},\ldots, x_n) \in A\}\\
AB_k&:=\{\x \in \mathcal{X}^{n+1}\colon (x_1, \ldots, x_n) \in A, (x_1, \ldots,x_{k-1}, x_{n+1},x_{k+1},\ldots, x_n) \in B\},
\end{align*}
we have that one set can be obtained from the other by swapping $x_k$ and $x_{n+1}$. Whence
\begin{align*}
%----------------------
&\int_B \int_{A_k(\x)}\frac{w(x_1)\cdots w(x_{n+1})}{
w(x_{n+1})+w(x_k)}P^{n+1}_{\xi}(d\x,dx_{n+1})=\\
&\int_{BA_k} \frac{w(x_1)\cdots w(x_{n+1})}{
w(x_{n+1})+w(x_k)}P^{n+1}_{\xi}(dx_1, \ldots,dx_k, \ldots,dx_{n+1})=\\
&\int_{BA_k} \frac{w(x_1)\cdots w(x_k) \cdots w(x_{n+1})}{
w(x_{n+1})+w(x_k)}P^{n+1}_{\xi}(dx_1, \ldots,dx_{n+1}, \ldots dx_k)=\\
&\int_{AB_k} \frac{w(x_1)\cdots w(x_{n+1}) \cdots w(x_{k})}{
w(x_{k})+w(x_{n+1})}P^{n+1}_{\xi}(dx_1, \ldots,dx_{k}, \ldots dx_{n+1})=\\
&\int_A\int_{B_k(\x)}{w(x_1)\cdots w(x_{n+1})\over
w(x_{n+1})+w(x_k)}P^{n+1}_{\xi}(d\x,dx_{n+1}).
\end{align*}
Although the sets $BA_k$ and $BA_k$ are, in general, different, the last equality holds because the function as well as the measure is invariant with respect to change $x_{n+1}$ and $x_k$.
Thus,
\[
\begin{split}
 (\$)&=
\frac{1}{Z_n}\int_A \int_{B_k(\x)}{w(x_1)\cdots w(x_{n+1})\over
w(x_{n+1})+w(x_k)}P^{n+1}_{\xi}(d\x,dx_{n+1})+\int_{A\cap B} b_k(\x)P_n(d\x)=\int_A P_k(\x,B%_k(\x)
)P_n(d\x)\\
%---------
\end{split}
\]
and this concludes the first part of the proof.\\\\
For the inverse fitness kernel, we proceed similarly. Clearly,
$$\int_B \widetilde P(\x,A)P_n(d\x)=\sum_{k=1}^n\int_B\widetilde P_k(\x,A)P_n(d\x)+\int_{B\cap A} c(\x)P_n(d\x).$$
Let now $k=1,\ldots,n$  be fixed and, as previously, we obtain
\begin{align*}
&\int_B \widetilde P_k(\x,A)P_n(d\x)=\\
&\frac{1}{Z_n} \int_B
\int_{ A_k(\x)}
 \frac{w(x_1) \cdots w(x_n) }{\sum_{j=1}^{n+1} w(x_k)/w(x_j)}
P_\xi(dx_{n+1}| \x)P^n_\xi(d\x)=\\
&\frac{1}{Z_n} \int_{BA_k}
\frac{w(x_k)w(x_{n+1})}{\prod_{j=1}^{n+1} w(x_j)}\Big({\sum_{j=1}^{n+1} {1\over w(x_j)}}\Big)^{-1}
P^{n+1}_\xi(dx_1, \ldots , dx_{n+1})=\\
&\frac{1}{Z_n} \int_{BA_k}
\frac{w(x_k)w(x_{n+1})}{\prod_{j=1}^{n+1} w(x_j)}\Big({\sum_{j=1}^{n+1} {1\over w(x_j)}}\Big)^{-1}
P^{n+1}_\xi(dx_1, \ldots, dx_{n+1})=\\
& \int_A \widetilde P_k(\x,B)P_n(d\x).
\end{align*}
\end{proof}
%-------------------------------------------------
\subsection{The measures $P_n$ and $Q_n$}\label{sec:PandQ} In the previous section we saw that
the measure $P_n$ defined as in \eqref{pn} is a stationary measure
for different selection schemes. The main objective of the current
article is to study the asymptotic behavior of $P_n$ as the
population size $n$ increases. To be more general, we shall assume
that the  fitness functions $w_n$ and the prior measures $\pi_n$
depend on $n$ hence, for every $n$, the measure $P_n$ on $\B(\X^n)$
is the following
\begin{equation}\label{pn1}
P_n(A)={1\over Z_n}\int_A \prod_{j=1}^n
w_n(x_j)P^n_{\xi}(d\x)={1\over Z_n}\int_{\P} \int_A \prod_{j=1}^n
w_n(x_j)q(dx_j)\pi_n(dq).
\end{equation}
%%%%%%%%%% It can be removed in the final version
The second equality in \eqref{pn1} follows from the fact that,
for every nonnegative measurable $f: \X^n\to \mathbb{R}^+$, it holds
$$\int_{\X^n} f(\x)P_{\xi}^n(d\x)=\int_{\X^n}\int_{\P} f(\x)q(dx_1)\cdots q(dx_n)\pi_n(dq).$$
Indeed, if $f=\ident_A$, then
$$P_{\xi}^n(A)=\int_{\P}q^n(A)\pi_n(dq)=\int_{\P}\int_{\X^n} f(\x)q(dx_1)\cdots q(dx_n)\pi_n(dq).$$
By linearity, the same holds for simple functions and then extends to nonnegative measurable functions by using the monotone convergence theorem and Fubini-Tonelli's theorem.
% We have
% $f(\x)=\ident_A(\x) w(x_1)\cdots w(x_n)$ that is non-negative function;
% thus it suffices to show the equality for non-negative $f$.]
%%%%%%%%%% end
\\\\
Thus, if $A=A_1\times \cdots \times A_n$, then
\begin{equation}\label{pndef}
P_n(A)={1\over Z_n}\int_{\P} \Big(\prod_{j=1}^n \int_{A_j}
w_n(x)q(dx)\Big)\pi_n(dq),\end{equation} and now it is easy to see
that
\begin{equation}\label{zn}
Z_n=\int_\P \langle w_n,q\rangle^n \pi_n(dq).\end{equation}
%------------
\paragraph{An alternative representation of $P_n$.} It turns out that it is convenient to represent the measure $P_n$
slightly differently as follows. For every $q\in \P$, we define a
probability measure $r_{q,n}$ on $\B(\X)$:
\begin{equation}\label{eq:rqn}
r_{q,n}(A):={\int_A w_n(x)q(dx)\over \langle w_n,q \rangle},\quad A \in \B(\X).
\end{equation}
The mapping $q\mapsto \langle w_n,q \rangle$ is  continuous,  and it
can also be shown that for any fixed $A\in \B(\X)$ the mapping
$q\mapsto \int_A w_n(x)q(dx)$ is measurable.
%%%%%%%%%% It can be removed in the final version
Indeed, $q \mapsto q(A)$ is measurable for all $A \in \B(\X)$, hence by linearity $q \mapsto \int_\X f(x)q(dx)$ is measurable for all measurable simple function $f$; for a generic
nonnegative measurable function $f$, the result follows by taking the usual limit argument $f_n\uparrow f$ where $\{f_n\}_{n}$ are simple functions.
%%%%%%%%%%% edn
And so for
any $A$, the mapping $q\mapsto r_{q,n}(A)$ is measurable as well. By
$\pi-\lambda$ argument, for any $A\in \B(\X^n)$, $q\mapsto
r^n_{q,n}(A)$ is measurable, where $r^n_{q,n}$ stands for $n$-fold
product measure.\\\\
%------
Given a probability measure $\pi_n$ on ${\cal B}(\P)$, we define
another probability measure $\bp$ on ${\cal B}(\P)$ as follows
\begin{equation}\label{eq:barpin}
\bp(E):={1\over Z_n}\int_E \langle w_n,q \rangle^n \pi_n(dq),\quad E\in {\cal B}(\P) .
\end{equation}
Here $Z_n$ is the normalizing constant, thus $Z_n$ is as in \eqref{zn}.
Now, the measure $P_n$ can be alternatively defined as follows
\begin{equation}\label{pn2}
P_n(A)=\int_{\P}r^n_{q,n}(A)\bp(dq),\quad A\in \B(\X^n).
\end{equation}
To see that the equality \eqref{pn2} holds, observe that the right
hand side of \eqref{pn2} defines a probability measure that  for any
measurable cylinder $A=A_1\times \cdots \times A_n$ reads
\begin{equation*}
\int_{\P}r^n_{q,n}(A)\bp(dq)=\int_\P \prod_{i=1}^n
r_{q,n}(A_i)\bp(dq)={1\over Z_n}\int_\P
 \Big(\prod_{i=1}^n \int_{A_i}w_n(x)q(dx) \Big)\pi_n(dq)=P_n(A),\end{equation*}
where the last equality holds by \eqref{pndef}. Therefore these
measures coincide on cylinders, hence also on $\B(\X^n)$.\\\\
%-------------
We can go one step further, and consider the mapping
\begin{equation}\label{rn-map}
r_n: \P \mapsto \P,\quad r_n(q):=r_{q,n}.\end{equation} For every
$n$, the map $r_n$ is continuous: let
$f$ be a bounded continuous function on $\X$. Note that $w_n$ and $f\cdot w_n$ and bounded and continuous for every $n$.
Whence,
% \[
\begin{multline*}
 \int_\X f(x) r_{q_m,n}(dx)=\frac{1}{\langle w_n, q_m \rangle } \int_\X f(x) w_n(x) q_m(dx)
 \\
 \stackrel{m \to \infty}{\longrightarrow}
 \frac{1}{\langle w_n, q \rangle } \int_\X f(x) w_n(x) q(dx)= \int_\X f(x) r_{q,n}(dx).
 \end{multline*}
% \]
% $q_m\Rightarrow q$, and
% consider an $r_{q,n}$ continuous set $A\subset \B(\X)$. Since $w_n$
% is strictly positive, it follows that $r_{q,n}(\partial A)=0$
% implies that $\int_{\partial A}w_n(x)q(dx)=0$, hence $q(\partial
% A)=0$, and so $A$ is $q$-continuous set. Then the discontinuity set
% of the function $w_n\ident_A$ has $q$-measure 0, hence as $m\to \infty$,
% $$\int _A w_n(x)q_m(dx)\to \int _A w_n(x)q(dx),$$
% so that $r_{q_m,n}(A)\to r_{q,n}(A)$, implying that
Thus $r_n(q_m)=r_{q_m,n}\Rightarrow r_{q,n}=r_n(q)$, whence
$r_n$ is  measurable. Now
define the pushforward measure $\nu_n$ on $\B(\P)$ as follows
$\nu_n(E):=\bp\big(r_n^{-1}(E)\big)$. Thus by change of variable
formula
\begin{equation}\label{nun-def}
P_n(A)=\int_{\P}r^n_{q,n}(A)\bp(dq)=\int_{\P}q^n(A)\nu_n(dq),\quad A\in \B(\X^n).\end{equation}
%---------------------------
\paragraph{The measure $Q_n$.} Recall the mapping $g$ defined in \eqref{defg} and the measure $Q_n$ defined in \eqref{eq:Qn}.
The mapping $g$  is many-to-one, because all permutation of a vector
$\x$ have the same $g(\x)$. Observe that for any function $f$ on
$\X^n$, it holds: $n^{-1}\sum_{i=1}^n f(x_i)=\langle f,
g(\x)\rangle$. This observation helps us to see that the mapping $g$
is continuous. Let $\x^m\to \x$ be a convergent sequence in $\X^n$.
Since the convergence in $\X^n$ is equivalent to pointwise
convergence, it follows that as $m\to \infty$,
for any continuous and bounded function on $\X^n$, i.e for any $f\in
C_b(\X^n)$, it holds
$$\langle f,g(\x^m)\rangle = n^{-1}\sum_{i=1}^n f(x^m_i)\to  n^{-1}\sum_{i=1}^n f(x_i)=\langle f,
g(\x)\rangle.$$ So the convergence $\x^m\to \x$ implies
$g(\x^m)\Rightarrow g(\x)$, hence $g$ is continuous and  measurable.
 The advantage of $Q_n$ over $P_n$
is that, for every $n$, the measure $Q_n$ is defined on the same
domain $\B(\P)$, and so one can study the convergence on $Q_n$ in
the usual sense of weak convergence of probability measures. When
$\X$ is finite, then the measure $Q_n$ can be constructed
explicitly, see \cite{article1}.
%----------------
\section{Limit process and limit measure $Q^*$}\label{sec:limit}
\subsection{The limit process}
We now turn to the asymptotics of $P_n$ as $n$ grows. Recall that we aim to show the existence of a limit process $X_1,X_2,\ldots$ so that
\eqref{fi-con} holds, where $(X_{1,n},\ldots,X_{n,n})\sim P_n$.
Observe that \eqref{fi-con} is equivalent to the following: for any $m\in
\mathbb{N}$,
\begin{equation}\label{fi-con2}
(X_{1,n},\ldots,X_{m,n})\Rightarrow (X_1,\ldots,X_m).
\end{equation}
Indeed, from \eqref{fi-con2}, it follows that \eqref{fi-con} holds
when $t_1<t_2<\ldots<t_m$ and the weak convergence of random vectors
implies that of the permutations. According to \eqref{pn2}, for
every $A_i\in \B(\X)$, $i=1,\ldots,m$, it holds
$${\bf P}(X_{1,n}\in A_1,\ldots,X_{m,n}\in A_m)=\int_\P \prod_{i=1}^m
r_{q,n}(A_i)\bp(dq)=:P_n(A_1\times \cdots \times A_m).$$
%---------------------
By the canonical representation, the existence of a stochastic process
is equivalent to the existence of a probability measure $P^*$ on
$(\X^{\infty},\Sigma)$, where $\Sigma$ is the product
$\sigma$-algebra. The measure $P^*$ can be considered as the
distribution of $X$. Let $\mathcal{C}(A_1\times \cdots \times A_m):=\{(x_i)\in
\X^{\infty}: x_1\in A_1,\ldots,x_m\in A_m\}$ be any
measurable cylinder, where $A_i\in \B(\X)$, $i=1,\ldots,m$. With
slight abuse of notation, we shall denote by $P^*(A_1\times \cdots
\times A_m)$ the measure of the cylinder $\mathcal{C}(A_1\times \cdots \times A_m)$. If $P^*$ is the
distribution of $X$, then $P^*(A_1\times \cdots \times A_m)={\bf
P}(X_1\in A_1,\ldots, X_m\in A_m)$.
 Since cylinders are a
convergence-determining class, (\cite{Billingsley}, Theorem 2.8), the
convergence \eqref{fi-con2} holds if for every $m$, and every
measurable and $P^*$-continuous cylinder $A=\{(x_i)\in \X^{\infty}:
x_1\in A_1,\ldots,x_m\in A_m\}$ it holds
%---------
\begin{equation}\label{two}
P_n(A_1\times \cdots \times A_m)\to P^*(A_1\times \cdots \times
A_m).\end{equation}
%----------
Recall that  $A$ is  $P^*$ continuous when $P^*(\partial A)=0$,
where $\partial A$ stands for the boundary of $A$.
%-------------
%  If, for every $m$, for every measurable cylinder $A_1\times \cdots
%   \times A_m$, and for every permutation $\sigma(1),\ldots,\sigma(m)$,
%   it holds
%  \begin{equation}\label{perm}
%   P^*(A_{\sigma(1)}\times \cdots \times A_{\sigma(m)})=P^*(A_1\times
%   \cdots \times A_m),\end{equation} then the corresponding stochastic
%  process $X$ is infinitely exchangeable.
%---
To summarize, for showing \eqref{fi-con}, it suffices to show
the existence of a probability measure $P^*$ on $(\X^{\infty},\Sigma)$
such that for all  $P^*$-continuous cylinders \eqref{two} holds.\\\\
%------------------
In the following theorem $\{r_{q,n}\}_n$
%and $r_q$
are
%generic families of
probability measures on $\X$ which do not necessarily coincide with the ones defined by \eqref{eq:rqn} (with $w_n$ as in
\eqref{wn}). We will see that for that particular choice of measures $\{r_{q,n}\}_n$, by Corollary \ref{cor1}, the hypotheses in Theorem
\ref{thmP} concerning uniform convergence to $r_q$ and  continuity of the map $q\to r_q$ are always satisfied.
%\textcolor{blue}{and the measure $P_n$ is defined as in (\ref{pn2}).} \textcolor{red}{"Generic family" might be too general, but  (\ref{pn2}) implies that $q\mapsto r_{q,n}$ must be suitably measurable}.
%--------------
\begin{theorem}\label{thmP} Suppose there exists a probability measure $\bar{\pi}$ on  ${\cal P}$ such that $\bp\Rightarrow \bar{\pi}$. For every $q \in \P$ and $n \in \mathbb{N}$, let
$r_{q,n},\, r_q \in \P$ be such that
 $\sup_{q\in {\cal P}}|r_{q,n}(A)-r_q(A)|\to
0$ for
 all $A\in {\cal B}(\X)$. Assume also that
%  $q\mapsto r_{q,n}$ is continuous for every $n \in \mathbb{N}$. Then
$q \mapsto r_q$ is continuous. Then there
exists an infinitely exchangeable  process $X$ so that for every
$m \in \mathbb{N}$,  the convergence \eqref{fi-con2} holds. Moreover, the limit process is such that for every $m\in \mathbb{N}$ and $A_1,\ldots,A_m\in \B(\X)$,
\begin{equation}\label{eq:p*}
P^*\big(A_1\times\cdots \times A_m\big):=
%{\bf P}(X_1\in A_1,\ldots,X_m\in A_m)=
\int_{\P} r^m_{q}\big(A_1\times\cdots \times A_m\big)\p(dq).
\end{equation}
\end{theorem}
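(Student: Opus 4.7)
The plan is to first construct the candidate limit $P^*$ from \eqref{eq:p*} and then to prove \eqref{two} for every $P^*$-continuous cylinder. To build $P^*$, I would note that $q\mapsto r_q$ is continuous (hence Borel) into $\P$ by hypothesis, and that $\mu\mapsto\mu(A)$ is $\B(\P)$-measurable for every Borel $A\subset\X$ (as recalled in Section~\ref{sec:pre}); composition yields the measurability and $[0,1]$-boundedness of $q\mapsto\prod_{i=1}^m r_q(A_i)$, so the right-hand side of \eqref{eq:p*} is well defined. Taking $A_{m+1}=\X$ contributes a factor $r_q(\X)=1$, whence the family $\{P^*(A_1\times\cdots\times A_m)\}_m$ is Kolmogorov-consistent and extends uniquely to a probability measure $P^*$ on $(\X^{\infty},\Sigma)$; conditional on $q\sim\bar\pi$ the coordinates are i.i.d.\ $r_q$, so the process is a de Finetti mixture and therefore infinitely exchangeable.

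Having defined $P^*$, I would fix a $P^*$-continuous cylinder $A_1\times\cdots\times A_m$ and, using the cylinder representation of $P_n$ from \eqref{pn2}, split
\[
P_n(A_1\times\cdots\times A_m)-P^*(A_1\times\cdots\times A_m)=T_n^{(1)}+T_n^{(2)},
\]
where
\[
T_n^{(1)}=\int_\P\Bigl(\prod_{i=1}^m r_{q,n}(A_i)-\prod_{i=1}^m r_q(A_i)\Bigr)\bp(dq),\qquad T_n^{(2)}=\int_\P\prod_{i=1}^m r_q(A_i)\,(\bp-\bar\pi)(dq).
\]
The first term is controlled via the telescoping inequality $\bigl|\prod a_i-\prod b_i\bigr|\le\sum_i|a_i-b_i|$ on $[0,1]$, combined with the uniform convergence hypothesis $\sup_q|r_{q,n}(A_i)-r_q(A_i)|\to 0$: the integrand tends to $0$ uniformly in $q$, so $T_n^{(1)}\to 0$ as $n\to\infty$.

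For $T_n^{(2)}$ I would apply Portmanteau's theorem to $\bp\Rightarrow\bar\pi$ with the bounded function $\Phi(q):=\prod_{i=1}^m r_q(A_i)$. Continuity of $q\mapsto r_q$ in the weak topology, together with the Portmanteau characterization, gives that $\Phi$ is continuous at every $q$ such that $r_q(\partial A_i)=0$ for all $i$. The main obstacle is to convert the hypothesis $P^*(\partial(A_1\times\cdots\times A_m))=0$ into $\bar\pi$-almost-sure continuity of $\Phi$: expanding $\partial(A_1\times\cdots\times A_m)=\bigcup_{i=1}^{m}\overline{A_1}\times\cdots\times\partial A_i\times\cdots\times\overline{A_m}$ and using \eqref{eq:p*} shows that, for $\bar\pi$-a.e.\ $q$ and every $i$, either $r_q(\partial A_i)=0$ or there exists $j\neq i$ with $r_q(\overline{A_j})=0$; in the latter case $\Phi(q)=0$, and along any $q_k\to q$ the Portmanteau inequality $\limsup_k r_{q_k}(\overline{A_j})\le r_q(\overline{A_j})=0$ forces $\Phi(q_k)\to 0=\Phi(q)$, so $\Phi$ is $\bar\pi$-a.e.\ continuous. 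Portmanteau then yields $T_n^{(2)}\to 0$, completing the proof.
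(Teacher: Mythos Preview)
Your argument is correct and follows the same overall strategy as the paper: construct $P^*$ via Kolmogorov consistency, then establish \eqref{two} by combining the uniform convergence $\sup_q\bigl|\prod_i r_{q,n}(A_i)-\prod_i r_q(A_i)\bigr|\to 0$ with the $\bar\pi$-a.e.\ continuity of $\Phi(q)=\prod_i r_q(A_i)$ and the weak convergence $\bp\Rightarrow\bar\pi$.

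There are two minor differences in implementation. First, you split $P_n-P^*$ additively into $T_n^{(1)}+T_n^{(2)}$ and invoke Portmanteau for bounded $\bar\pi$-a.e.\ continuous integrands to handle $T_n^{(2)}$; the paper instead couples via the Skorohod representation $Z_n\to Z$ a.s.\ and passes to the limit in $Ef_n(Z_n)$ by bounded convergence. These are equivalent devices, and your route is slightly more direct since it avoids introducing the coupling. Second, for the a.e.\ continuity of $\Phi$ you do a case analysis on whether some $r_q(\overline{A_j})=0$; the paper argues more compactly by observing that $r^m_q(\partial(A_1\times\cdots\times A_m))=0$ for $\bar\pi$-a.e.\ $q$, so $A_1\times\cdots\times A_m$ is an $r^m_q$-continuity set, and then uses $r^m_{q_k}\Rightarrow r^m_q$ (weak convergence of marginals implies that of products) together with Portmanteau in $\X^m$. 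Both arguments are valid; the paper's product-measure version bypasses the case split.
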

%-----------
\begin{proof}
Let for every $m$  and distinct integers  $t_1,\ldots,t_m\in \mathbb{N}$,
$\mu_{t_1,\ldots,t_m}$ be a probability measure on $\B(\X^m)$
defined as follows: $$ \mu_{t_1,\ldots,t_m}(A):=\int_{\P}
r^m_{q}(A)\bar{\pi}(dq),\quad A\in \B(\X^m).$$ The definition is
correct, because by assumption $q\mapsto r_q$ is measurable, and so
for every $m$ and $A\in \B(\X^m)$, the mapping $q\mapsto r^m_q(A)$
(product measure) is measurable as well. Note that this definition depends on $m$ but is independent of the choice of $t_1, \ldots, t_m$. Clearly the family
$\{\mu_{t_1,\ldots,t_m}\}$ fulfills the consistency conditions, and
so by Kolmogorov existence theorem there exists a measure $P^*$ on
$(\X^{\infty},\Sigma)$ such that for every distinct integers
$t_1,\ldots, t_m$ and every $A\in \B(\X^m)$, it holds
$$P^*\big(\{ (x_i)\in \X^{\infty}: (x_{t_1},\ldots,x_{t_m})\in
A\}\big)=\mu_{t_1,\ldots,t_m}(A).$$
% Observe that [the first equality
% is the definition of $P^*(A_1\times \cdots \times A_m)$ and the
% second is definition of $\mu_{1,\ldots,n}$]
In particular
$$P^*(A_1\times \cdots \times A_m)=\mu_{1,\ldots,n}(A_1\times \cdots \times
A_m)=\int_{\P}\prod_{i=1}^mr_q(A_i){\bar \pi}(dq).$$  Thus $P^*$ is
the distribution of an  infinitely exchangeable process, and the
theorem is proven, when we show that \eqref{two} holds for all
$P^*$-continuous cylinders.
%-------------
To show \eqref{two}, we use Skorohod representation theorem (\cite{
Billingsley}, Theorem 6.7) according to which there are ${\cal
P}$-valued random variables $Z_n$ and $Z$ such that $Z_n$ has
distribution $\bp$, $Z$ has distribution ${\bar \pi}$ and $Z_n\to Z$
a.s.. The theorem applies on separable metric space, but ${\cal P}$
equipped with Prokhorov metric is separable.
 Fix a $P^*$-continuous cylinder $A=\{(x_i)\in \X^{\infty}: x_1\in A_1,\ldots,x_m\in A_m\}$
 and let us denote
$$f_n(q):=\prod_{i=1}^m r_{q,n}(A_i)=r^m_{q,n}(A_1\times \cdots \times A_m),\quad f(q):=\prod_{i=1}^m r_{q}(A_i)=r^m_{q}(A_1\times \cdots \times A_m).$$
If $m=1$, then by assumption, it immediately follows that
$\sup_q|f_n(q)-f(q)|\to 0$. Since the functions are bounded, the
uniform convergence also holds when $m>1$. Indeed, if $f_n\to f$ and
$g_n\to g$ uniformly and all functions are bounded by 1, then
$$|f_ng_n-fg|=|f_ng_n-f_ng+f_ng-fg|\leq|f_n(g_n-g)|+|g(f_n-f)|\leq
|f_n-f|+|g_n-g|.$$ Let $E_{\rm cont}\subset {\cal P}$ be the set of
continuity points of $f$. We shall show that ${\bar \pi}(E_{\rm
cont})=1$. Since
$$\partial A=\{(x_i)\in \X^{\infty}: (x_1,\ldots,x_m)\in \partial (A_1\times \cdots \times A_m) \},$$
 where $\partial (A_1\times \cdots \times A_m)$ is
the boundary in $\X^m$, we have (since $A$ is $P^*$-continuous)
$$P^*(\partial A)=\int_\P r^m_q\big(\partial (A_1\times \cdots \times A_m)\big){\bar \pi}(dq)=0.$$
The integral of non-negative function is zero only if the function
is ${\bar \pi}$-a.s. equal to 0 and so we have ${\bar \pi}(F)=1$,
where
$$F:=\{q: r^m_q\big(\partial (A_1\times \cdots \times A_m)\big)=0\}.$$
We now show that $F\subset E_{\rm cont}$. Indeed, if $q\in F$ and
$q_n\Rightarrow q$, then by the continuity assumption
$r_{q_n}\Rightarrow r_q$ and thus also $r^m_{q_n}\Rightarrow r^m_q$
(because weak convergence of marginal measures implies weak
convergence of the product measure). Since $r^m_q\big(\partial
(A_1\times \cdots \times A_m)\big)=0$, it follows that
$$r_{q,n}^m(A_1\times \cdots \times A_m)\to r^m_q(A_1\times \cdots \times
A_m).$$ Thus $f(q_n)\to f(q)$ and so $q\in E_{\rm cont}$. Since
${\bar \pi}(E_{\rm cont})=1$, from $Z_n\to Z$ a.s. it follows
$f(Z_n)\to f(Z)$ a.s.
%
% $Z_n:(\Omega, \mathcal{F}, \mu) \mapsto \P$ has law $\bar \pi_n$: $\mu (Z_n \in B)=\bar \pi_n(B)$.
% We have $Z_n(\omega) \to Z(\omega)$ for all $\omega \in \bar \Omega, \ \mu(\bar \Omega)=1$.
% Thus $f(Z_n(\omega)) \to f(Z(\omega))$ for all  \omega \in \bar \Omega \cap Z^{-1}(E_{cont})$ and
% $\mu(Z^{-1}(E_{cont}))=\bar \pi (E_{cont})=1$
%
From the uniform convergence, it follows that
$|f_n(Z_n)-f(Z_n)|\to 0$. These two facts together imply
$f_n(Z_n)\to f(Z)$, a.s. Finally, since the functions $f_n$ are all
bounded by 1, by the bounded convergence theorem it follows
$Ef_n(Z_n)\to Ef(Z)$. Since
\begin{align*}
&P_n(A_1\times \cdots \times A_m)=\int_\P r^m_{q,n}(A_1\times \cdots
\times A_m)\bp(dq)=\int_\P
f_n(q)\bp(dq)=Ef_n(Z_n)\\
&P^*(A_1\times \cdots \times A_m)=\int_\P r^m_{q}(A_1\times \cdots
\times A_m){\bar \pi}(dq)= \int_\P f(q){\bar
\pi}(dq)=Ef(Z),\end{align*} we have \eqref{two}.\end{proof}
%--------------------------------------------------
%-----------------------------------------
\begin{corollary}\label{cor1}
 Let $r_{q,n}$ and $w_n$ be as defined in \eqref{eq:rqn} and \eqref{wn} respectively.
If $\lambda>0$, define  $r_q=q$, while if $\lambda=0$, let $r_q$ be the measure proportional to $wdq$, where $w=w_n$ (in this case $w_n$ does not depend on $n$).
Then
\begin{enumerate}
\item $\sup_{q\in {\cal P}}|r_{q,n}(A)-r_q(A)|\to0$ for all $A\in \B(\X)$;
\item $q\mapsto r_q$ is continuous;
\item if $\bp\Rightarrow \bar{\pi}$ (where  $\bp$ is defined in \eqref{eq:barpin}) then $P_n$ converges (in the sense of \eqref{fi-con2})
to the measure $P^*$
defined in \eqref{eq:p*}.
\end{enumerate}
\end{corollary}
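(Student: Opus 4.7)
The statement splits naturally according to whether $\lambda>0$ or $\lambda=0$, and the bulk of the work is part 1 (uniform convergence); parts 2 and 3 will then follow essentially by bookkeeping. The boundedness and continuity of $\phi$ are the two ingredients that make the argument work.

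For part 1 in the case $\lambda>0$, I would start from the observation that if $M:=\sup_x \phi(x)<\infty$, then the elementary inequality $1-e^{-t}\le t$ (for $t\ge 0$) gives
\[
\|w_n-1\|_\infty \;=\; \sup_{x\in\X}\bigl|1-e^{-\phi(x)/n^\lambda}\bigr| \;\le\; M/n^\lambda \;\longrightarrow\; 0,
\]
uniformly on $\X$. Writing
\[
r_{q,n}(A)-q(A) \;=\; \frac{\int_A w_n\,dq - q(A)\,\langle w_n,q\rangle}{\langle w_n,q\rangle},
\]
I would bound the numerator by $|\int_A (w_n-1)\,dq| + q(A)\,|\langle w_n-1,q\rangle|\le 2\|w_n-1\|_\infty$, and the denominator from below by $\inf_x w_n(x) \ge e^{-M}$, which is a positive constant independent of $q$. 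This yields $\sup_{q\in\P}|r_{q,n}(A)-q(A)|\le 2Me^{M}/n^\lambda\to 0$, as required. In the case $\lambda=0$ the map $w_n=w$ is constant in $n$, so $r_{q,n}\equiv r_q$ and part 1 is trivial.

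For part 2, when $\lambda>0$ the identity map $q\mapsto q$ is tautologically continuous. When $\lambda=0$, this is exactly the continuity argument carried out immediately before \eqref{rn-map} in the preliminaries: for any $f\in C_b(\X)$, both $fw$ and $w$ are bounded continuous, so $q_m\Rightarrow q$ implies $\int f w\,dq_m \to \int f w\,dq$ and $\langle w,q_m\rangle\to\langle w,q\rangle$; dividing gives $\int f\,dr_{q_m}\to \int f\,dr_q$, i.e.\ $r_{q_m}\Rightarrow r_q$.

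For part 3, parts 1 and 2 together with the standing hypothesis $\bp\Rightarrow \bar\pi$ verify every assumption of Theorem \ref{thmP}. Invoking that theorem directly yields the finite-dimensional convergence \eqref{fi-con2} with limit measure
$P^*(A_1\times\cdots\times A_m)=\int_\P r_q^m(A_1\times\cdots\times A_m)\,\bar\pi(dq)$ as in \eqref{eq:p*}. The only step requiring any care is the uniform bound on the denominator $\langle w_n,q\rangle$ in part 1, and this is exactly where the boundedness of $\phi$ is used; once that is in hand the rest is essentially formal.
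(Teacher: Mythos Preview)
Your proof is correct and, for parts 1 and 3, follows the same line as the paper's: uniform convergence $w_n\to w$ on $\X$ (using boundedness of $\phi$), a uniform positive lower bound on $\langle w_n,q\rangle$, and then a direct appeal to Theorem~\ref{thmP}. For part 2 you take the direct route---verifying continuity of $q\mapsto r_q$ by the same argument used for $q\mapsto r_{q,n}$ after \eqref{rn-map}---which the paper explicitly acknowledges works; the paper instead chooses to \emph{deduce} continuity of the limit map from continuity of each $r_{q,n}$ together with the uniform convergence in part 1, via a $3\varepsilon$ argument and the Uniform Bounded Convergence Theorem. Your route is shorter and fully sufficient for the corollary as stated; the paper's detour is there to support the subsequent remark that the conclusions persist for general weight sequences $w_n\to w$ (not necessarily of the form \eqref{wn}), where one may not have a closed-form $r_q$ to verify directly.
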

%---------------------
\begin{proof}
\begin{enumerate}
\item Recall that $q\mapsto r_{q,n}$ is continuous as explained after \eqref{rn-map} and
 that $w_n(x)=\exp(-\phi(x)/n^\lambda)$. Let $w=1$ if $\lambda>0$, $w=\exp(-\phi(x))=w_n(x)$ if $\lambda=0$.
\\
Note that $\sup_{x\in\X}|w_n(x)-w(x)|=0$ if $\lambda=0$.
If $\lambda>0$, then $\sup_{x\in\X}|w_n(x)-w(x)|=1-\exp(-\sup_x\phi(x)/n^\lambda)$, which goes to 0 as $n$ tends to infinity, since
$\phi$ is by hypothesis bounded.
It  follows that
$$\sup_q |\langle w_n,q \rangle - \langle w,q \rangle| \le \sup_q \langle |w_n-w|,q \rangle
\to 0.$$
% [Indeed, if $\sup_x |w_n(x)-w(x)|<\epsilon$, then also
% $|\langle w_n-w,q \rangle |\leq \int |w_n-w|dq\leq \epsilon$].
Clearly the same argument holds when integrating over any set $A$.
We now observe that $\inf_{x\in \X} w(x)=\exp(-\sup_x\phi(x)/n^\lambda)>0$, when $\lambda>0$ and it is equal to 1 when $\lambda=0$.
This implies that,  for all $A\in \B(\X)$,
$$\sup_{q\in {\cal P}}|r_{q,n}(A)-r_q(A)|=\sup_{q\in {\cal P}}\Big|{\langle \ident_A w_n,q \rangle \over \langle w_n,q \rangle}-{\langle \ident_A w,q \rangle \over \langle w,q \rangle}\Big|\to 0.$$

%-----------
\item
%\textcolor{blue}{
Although the continuity of $q\mapsto r_{q}$ follows by the same argument as the continuity of $q\mapsto r_{q,n}$, we shall now show that it can be directly deduced from the continuity of
 $q\mapsto r_{q,n}$ and the uniform convergence stated in 1.
%}
Let $f$ be a bounded, nonnegative, measurable function on $\X$ and consider a sequence $\{q_m\}_m$ such that $q_m \Rightarrow q$.
  By using the Uniform Bounded Convergence Theorem (see for instance \cite[Theorem 2.3]{cf:BZ02}), as $n\to \infty$,
 \[
  \sup_{q \in \P} \Big | \int_\X f(x) r_{q,n}(dx) -
  \int_\X f(x) r_{q}(dx) \Big |
 =
  \sup_{q \in \P} \Big | \int_0^{+\infty}  r_{q,n}(f \ge t) dt -
  \int_0^{+\infty} r_{q}(f \ge t) dt \Big | \to 0.
\]
Suppose now that, in addition, $f$ is continuous. Take $\varepsilon >0$ and $n_0=n_0(\varepsilon)$ such that for all $n \ge n_0$ we have
\[
  \sup_{q \in \P} \Big | \int_\X f(x) r_{q,n}(dx) -
  \int_\X f(x) r_{q}(dx) \Big | <\varepsilon/3.
\]
Since $q \mapsto r_{q,n}$ is continuous for every $n \in \mathbb{N}$, take  $m_0=m_0(\varepsilon, n_0)$ such that
\[
 \Big | \int_\X f(x) r_{q_m,n_0}(dx) -
  \int_\X f(x) r_{q,n_0}(dx) \Big | < \varepsilon/3
\]
for all $m \ge m_0$. Clearly, for all $m \ge m_0$
\[
 \begin{split}
  \Big | \int_\X f(x) r_{q_m}(dx) -
  \int_\X f(x) r_{q}(dx) \Big |
  & \le
  \Big | \int_\X f(x) r_{q_m}(dx) -
  \int_\X f(x) r_{q_m, n_0}(dx) \Big |\\
  &+
  \Big | \int_\X f(x) r_{q_m,n_0}(dx) -
  \int_\X f(x) r_{q,n_0}(dx) \Big |\\
  &+
  \Big | \int_\X f(x) r_{q,n_0}(dx) -
  \int_\X f(x) r_{q}(dx) \Big | < \varepsilon.
 \end{split}
 \]
%--------
\item If $\bp\Rightarrow\bar \pi$ then all the assumptions of Theorem \ref{thmP} are satisfied and the claim follows.
\end{enumerate}
\end{proof}
%------
%\textcolor{red}{I added this sentence to the proof, because otherwise the reader might wonder, why the same argument (after (2.8)) will not work for $r_q$.}
%-------------------------------------------------------------------------
\begin{remark}
The proof of Corollary \ref{cor1} shows that the assumptions of Theorem \ref{thmP} on $r_{q,n}$ and $r_q$ are satisfied also with more general weight functions than those considered in \eqref{wn}.
Indeed
\begin{itemize}
\item
if the weight functions are such that
$q\mapsto r_{q,n}$ is continuous for every $n \in \mathbb{N}$ and
 $\sup_{q\in {\cal P}}|r_{q,n}(A)-r_q(A)|\to
0$ for all $A\in {\cal B}(\X)$, then $q \mapsto r_q$ is continuous;
\item
in particular,  if
$w_n$ and $w$ are measurable functions so that
% $w$ is strictly positive,
% bounded away from zero [$
$\inf_{x \in \X} w(x)>0$ and $\sup_{x \in \X} |w_n(x)-w(x)| \to 0$,
then
$q\mapsto r_{q,n}$ is continuous for every $n \in \mathbb{N}$ and
 $\sup_{q\in {\cal P}}|r_{q,n}(A)-r_q(A)|\to
0$ for all $A\in {\cal B}(\X)$ with $r_q$ being the probability measure
proportional to $ w dq$.
\end{itemize}
\end{remark}
%-----------------------

%-----------------------------------------------------
\subsection{The weak convergence of $Q_n$}
The goal is to show that under the same assumptions as in Theorem
\ref{thmP} with the additional request that
$\X$ is compact, the measures
$Q_n$ converge to a measure $Q$.
Recall the function $g$ in \eqref{defg} that maps  every sequence
${\bf x}=(x_1,\ldots,x_n)$ to its empirical measure. To stress out the
dependence of $n$, it this section, we shall denote the function $g$ as
$g_n$.
%--------------
\begin{theorem}\label{thmQ} Suppose $\X$ is compact and the
assumptions of Theorem \ref{thmP} hold. Let $Q=\p r^{-1}$,
where $r:\P\mapsto\P$ is defined from the $r_q$ in Theorem \ref{thmP}, as $r(q)=r_q$ for all $q\in\P$.
Then $Q_n\Rightarrow Q$.
\end{theorem}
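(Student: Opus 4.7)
My plan is to exploit the mixture representation \eqref{nun-def}, under which a sample from $P_n$ is obtained by first drawing a random measure $q\sim\nu_n$ on $\P$ and then sampling $n$ i.i.d.\ copies from $q$. Applying $g_n$ then yields the empirical measure $\hat{q}_n$ of $n$ i.i.d.\ draws from $q$, so that for any bounded continuous $F:\P\to\mathbb{R}$,
\[
\int_\P F\, dQ_n \;=\; \int_\P E_{q^n}\!\big[F(\hat{q}_n)\big]\,\nu_n(dq).
\]
Adding and subtracting $\int_\P F(q)\,\nu_n(dq)$ will reduce the task to two pieces: (a) a uniform law of large numbers $\sup_{q\in\P}|E_{q^n}F(\hat{q}_n)-F(q)|\to 0$, and (b) the weak convergence $\nu_n\Rightarrow Q$.

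For (a) I will use compactness of $\P$, which follows from compactness of $\X$ via Prokhorov's theorem. Fix a countable family $\{f_k\}_{k\geq 1}\subset C(\X)$ with $\|f_k\|_\infty\leq 1$ separating measures, and let $\rho(p,q):=\sum_{k\geq 1}2^{-k}|\langle f_k,p\rangle-\langle f_k,q\rangle|$; this is a metric compatible with weak convergence on $\P$. A direct second-moment estimate gives $E_{q^n}|\langle f_k,\hat{q}_n\rangle-\langle f_k,q\rangle|^2\leq 1/n$ uniformly in $q$, and so by Jensen and dominated convergence on the series,
\[
\sup_{q\in\P}E_{q^n}[\rho(\hat{q}_n,q)]\;\leq\; n^{-1/2}\sum_{k\geq 1}2^{-k}\;\to\;0.
\]
Since $F$ is uniformly continuous on the compact metric space $(\P,\rho)$, this upgrades to $\sup_{q\in\P}|E_{q^n}F(\hat{q}_n)-F(q)|\to 0$, giving (a).

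For (b), since $\nu_n=\bp\, r_n^{-1}$ and $Q=\p\, r^{-1}$, we have $\int F\,d\nu_n=\int F\circ r_n\,d\bp$ and $\int F\,dQ=\int F\circ r\,d\p$. Writing
\[
\int F\circ r_n\,d\bp \;-\; \int F\circ r\,d\p \;=\; \int (F\circ r_n-F\circ r)\,d\bp \;+\; \int F\circ r\, d(\bp-\p),
\]
the second term will vanish because $F\circ r$ is bounded continuous (by continuity of $r$) and $\bp\Rightarrow \p$ by hypothesis. For the first term I will upgrade the setwise uniform convergence $\sup_q|r_{q,n}(A)-r_q(A)|\to 0$ to $\sup_q|\langle f_k,r_{q,n}\rangle-\langle f_k,r_q\rangle|\to 0$ for every $k$, by uniformly approximating each continuous $f_k$ by simple functions on the compact space $\X$ and then using the assumed uniformity in $q$; dominated convergence over $k$ will then yield $\sup_q\rho(r_{q,n},r_q)\to 0$. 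Uniform continuity of $F$ on $(\P,\rho)$ closes this piece.

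The hard part will be step (a): passing from the pointwise fact $\hat{q}_n\Rightarrow q$ a.s.\ (Varadarajan) to a rate that is uniform in $q$. This is precisely where compactness of $\X$ is indispensable, as it makes $\P$ compact, allows weak convergence to be controlled by countably many test functions of sup-norm one, and yields the $q$-uniform concentration estimate above; without compactness one would need a genuine large-deviation argument together with additional tightness hypotheses on $\{\nu_n\}$, which is consistent with the authors' remark in the introduction.
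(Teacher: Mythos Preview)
Your proof is correct and takes a genuinely different route from the paper's. The paper works with the Portmanteau characterization via open sets: it first shows $\liminf_n \nu_n(E)\geq Q(E)$ for open $E$ using Skorohod representation together with the implication $q_n\Rightarrow q\ \Rightarrow\ r_n(q_n)\Rightarrow r(q)$, and then handles the empirical-measure part by a Sanov-type covering-number bound (Dembo--Zeitouni, Ex.~6.2.19) to obtain $\inf_{q\in F_\delta}q^n(g_n^{-1}(E))\to 1$ on shrunken open sets $F_\delta\subset E$. You instead test against bounded continuous $F$ and reduce to two pieces: your step~(a) replaces the large-deviation input by an elementary second-moment/Chebyshev estimate giving $\sup_{q}E_{q^n}[\rho(\hat q_n,q)]\leq n^{-1/2}$, which together with uniform continuity of $F$ on the compact space $(\P,\rho)$ yields the uniform LLN; your step~(b) proves $\nu_n\Rightarrow Q$ directly by upgrading the assumed setwise uniform convergence $\sup_q|r_{q,n}(A)-r_q(A)|\to 0$ to $\sup_q\rho(r_{q,n},r_q)\to 0$ via simple-function approximation of the $f_k$.

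What your approach buys is that it avoids large-deviation machinery entirely. In fact you are supplying exactly the uniform estimate the authors flag as missing in Remarks~\ref{rem:1}--\ref{rem:2}: they note that (\ref{unif}) or (\ref{fuu}) would suffice without the LDP, but in \cite{article1} this was obtained only for finite $\X$ via Bernstein polynomials; your metric $\rho$ and variance bound give it for arbitrary compact $\X$. The paper's LDP route, by contrast, yields an exponential rate in (\ref{lb}), which is stronger than needed here. One small point worth making explicit: ``separating measures'' alone is a priori weaker than ``$\rho$ metrizes weak convergence''; however, since $\P$ is compact in the weak topology and the identity map $(\P,\text{weak})\to(\P,\rho)$ is continuous into a Hausdorff space, the two topologies automatically coincide, so your choice of $\{f_k\}$ is fine.
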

%-----------
\begin{proof} By the Portmanteu theorem (see for instance
\cite[Theorem 2.1]{Billingsley}), it suffices to show that for every open set $E \in \B(\P)$ we have
$\liminf_n Q_n(E)\geq Q(E)$. Recall that, according to \eqref{nun-def},
$$Q_n(E)=\int_{\P}r^n_{q,n}\big(g_n^{-1}(E)\big)\bp(dq)=\int_{\P}q^n(g_n^{-1}(E)\big)\nu_n(dq),\quad E\in \B(\P).$$
Let $E$ be an open set. We first show that $\lim\inf_n \nu_n(E)\geq Q(E)$.
For that, we use Skorohod representation again, so let $Z_n\sim \bp$, $Z\sim
\p$ be $\P$-valued random variables so that $Z_n\to Z$, a.s.. Recall  that we use parallel notation $r_n(q):=:r_{q,n}$
and $r(q):=:r_q$, where $r_n,r: \P\to \P$. We now argue that the a.s. convergence $Z_n\to Z$ (with respect to Prokhorov metric) entails
 $r_n(Z_n)\to r(Z)$, a.s.. Since the convergence with respect to Prokhorov metric is equivalent to to the weak convergence of measures, it suffices to show that
 $q_n\Rightarrow q$ implies $r_n(q_n)\Rightarrow r(q)$. To see that take
 $A$ to be a $r_q$-continuous set. Then
$$|r_{q_n,n}(A)-r_q(A)|\leq
|r_{q_n,n}(A)-r_{q_n}(A)|+|r_{q_n}(A)-r_q(A)|.$$ By assumption,
$|r_{q_n,n}(A)-r_{q_n}(A)|\leq \sup_{q}|r_{q,n}(A)-r_{q}(A)|\to 0$,
 and since $q\mapsto r(q)$ is continuous, it follows that
$|r_{q_n}(A)-r_q(A)|\to 0$. Hence $r_n(q_n)\Rightarrow r(q)$, and so $r_n(Z_n)\to r(Z)$. Since $E$ is open, it follows that
$$P \big (\{r(Z)\in E\}\setminus
% \{ r_n(Z_n)\in E,\quad \text{ev.}\}=
\liminf_n
\{r_n(Z_n)\in E\} \big )=0$$ and so the following holds for all open sets $E$
\begin{equation}\label{eq:Qmun}
\begin{split}
Q(E)&=\p\big(r^{-1}(E)\big)=P( r(Z)\in E)\leq P\big(\liminf_n
\{r_n(Z_n)\in E\}\big)\\
&\leq \liminf_n P(r_n(Z_n)\in E)=\liminf_n
\bp\big(r_n^{-1}(E)\big)=\liminf_n \nu_n(E).
\end{split}
\end{equation}
%--------------
Denote by $m(\delta)$ the $\delta$-covering number (i.e.~the minimal number of $\delta$-balls needed to
cover $\P$), $E_{\delta}=\{p \in \P: d(p,E)\leq
\delta\}$ the closed $\delta$-blowup of $E$
and recall the definition of relative entropy
% of $p$ with respect to $q$
\begin{equation}\label{eq:entropy}
 D(p \| q):=
 \begin{cases}
  \int_\X \ln \big (\frac{dp}{dq} \big ) dp & \text{if }p \ll q\\
  +\infty & \text{else}.
 \end{cases}
\end{equation}
When $\X$ is compact,
then also $\P$ is compact, so $m(\delta)<\infty$. For us it is
important that $m(\delta)$ is independent of $q$.\\
Since $\X$ is compact, for every $q \in \P$ the following inequality holds (see \cite{LDP}, Ex 6.2.19):
\begin{equation}\label{sanov2}
q^n\big(g_n^{-1}(E)\big)\leq
\inf_{\delta>0}\Big(m(\delta)\exp[-\inf_{p\in
E_{\delta}}D(p\|q)\cdot n]\Big).
\end{equation}
Let $E^c_{\delta}$ be closed
$\delta$-blowup of $E^c$. Then, for any $\delta>0$,
define
$$F_{\delta}:=\big(E^c_{2\delta}\big)^c.$$
 Clearly $F_{\delta}$ is an open
set inside $E$, and $\cup_{\delta>0}F_{\delta}=E$. We now argue that
for any $\delta>0$,
% there exists $\e(\delta)>0$ so that
\begin{equation}\label{inside}
% F_{\delta}\subset \{q\in E:  \inf_{p\in
% E^c_{\delta}}D(p\|q)>\epsilon\}.
\inf_{p \in E^c_\delta, q \in F_\delta} D(p\|q)>0.
\end{equation} If not, there would
be a sequence $\{q_n\}_n$ in $F_{\delta}$ and $\{p_n\}_n$ in
% so that
% $$\inf_{p\in E^c_{\delta}}D(p\|q_n)\to 0.$$
% In particular, there would be also a sequence $p_n\in
$E^c_{\delta}$
so that $D(p_n\|q_n)\to 0$. From Pinsker's inequality, it follows
that $d(p_n,q_n)\to 0$, where $d$ stands for Prokhorov metric. But
since $p_n\in E^c_{\delta}$ and $q_n\in \big(E^c_{2\delta})^c$, it
must be that $d(p_n,q_n)>\delta$ for every $n$. Hence \eqref{inside} holds. From \eqref{sanov2},
we obtain that for every $\delta>0$,
\begin{equation}\label{lb}
\sup_{q\in F_{\delta}}q^n\big(g^{-1}_n(E^c)\big)\leq
m(\delta)\exp[-\varepsilon(\delta)\cdot n],\quad \inf_{q\in
F_{\delta}}q^n\big(g^{-1}_n(E)\big)\geq
1-m(\delta)\exp[-\e(\delta)\cdot n].\end{equation} Therefore,
\begin{equation*}\label{alt}
Q_n(E)\geq \int_{F_{\delta}}q^n(g^{-1}_n(E))\nu_n(dq)\geq
\big(1-m(\delta)\exp[-\e(\delta)\cdot n] \big)
\nu_n(F_{\delta}).
\end{equation*}
From equation~\eqref{eq:Qmun}, since $F_{\delta}$ is open  we deduce that
$$\liminf_n Q_n(E)\geq \liminf_n \nu_n(F_{\delta})\geq
Q(F_{\delta}) \uparrow Q(E), \quad \textrm{as } \delta \downarrow 0$$
where the last limit follows from the continuity of a measure.
% Suppose, on contrary, that $\lim\inf_n
% Q_n(E)<Q(E)$. Then, there exists $\e_o>0$ so that $\lim\inf_n
% Q_n(E)<Q(E)-\e_o$. Since $\cup_{\delta>0}F_{\delta}=E$, there exists
% $\delta>0$ so that $Q(F_{\delta})>Q(E)-\e_o$  -- a contradiction!
%C
\end{proof}
%---------------------------------------------------------------------------------------------
\begin{remark}\label{rem:1}
 The additional assumption of compactness is disappointing. In the proof above, it was needed for the Sanov type of inequality \eqref{sanov2}, which, in turn, was needed for the uniform convergence in \eqref{lb}. The convergence \eqref{lb} is, in a sense, exponentially fast, but for our proof the speed is not important, it just suffices to have:
\begin{equation}\label{unif}\inf_{q\in F_{\delta}}q^n(g_n^{-1}(E))\to 1.\end{equation}
Recall that for every open $E$, and for every $q\in E$, by SLLN $q^n(g^{-1}_n(E))\to 1$. The convergence \eqref{unif} states that on the set $F_{\delta}$ this convergence is uniform and then our proof applies.
\end{remark}
%-----------------------------------
\begin{remark}\label{rem:2}
In \cite{article1}, it was shown that, when $\X$ is finite,
for any continuous and bounded $f: \P\to \mathbb{R}$, it holds: $\int_\P f dQ_n\to \int_\P f dQ$. In our notation
$$\int_\P f(q)Q_n(dq)=\int_{\P}\int_{\X^n}f(g_n(\x))q^n(d\x)\nu_n(d q).$$
By SLLN,
$$f_n(q):=\int_{\X^n}f(g_n(\x))q^n(d\x)\to f(q),$$
and if this convergence were uniform, i.e.
 \begin{equation}\label{fuu}
\sup_q|f_n(q)-f(q)|\to 0,
\end{equation} then from $\nu_n\Rightarrow Q$, by using the Skorohod representation, it would follow that $\int_\P f_n(q)\nu_n(dq)\to \int_\P f(q)Q(dq)$. In \cite{article1} the equation \eqref{fuu} for finite $\X$ was obtained with Bernstein polynomials.
\end{remark}
%-------------
% %-------------------------------------------------
\section{Arbitrary prior $\pi$}\label{sec:pi}
In this section, we consider the case when the prior $\pi$ is arbitrary and independent of $n$.
In what follows, we take $w_n$ as defined in \eqref{wn}. By Theorem \ref{thmP} and Corollary \ref{cor1},
to ensure the existence of the limit process,
it suffices to show $\bp\Rightarrow \p$.\\\\
Since in the integral \eqref{eq:barpin} defining $\bp$ we have the function $\langle w_n, q \rangle^n$, we start with the following observation.
%------------
\begin{proposition}\label{Pee} If $m\to \infty$, and $\phi$ is nonnegative and integrable with respect to $q$ (but not
necessarily bounded), then
\begin{equation}\label{ee}
\langle \exp[-{\phi\over m}], q \rangle^m\to \exp[-\langle \phi, q
\rangle].\end{equation} Moreover, when $\phi$ is bounded, then the
convergence is uniform over $q$.
\end{proposition}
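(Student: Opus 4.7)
The plan is to take logarithms and reduce the problem to standard limits of the form $\lim_{m\to\infty} m(a_m-1)$, handled by dominated convergence, with the exponential and logarithmic remainders controlled by quadratic error terms.

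\textbf{Setup.} Write $a_m(q) := \langle \exp[-\phi/m], q \rangle$. Because $0 \le e^{-\phi/m} \le 1$ and $e^{-\phi(x)/m} \to 1$ pointwise, dominated convergence gives $a_m(q) \to 1$ for every fixed $q$ (in particular $a_m(q) > 0$ eventually). Taking logarithms, it will be enough to show that $m\ln a_m(q) \to -\langle \phi, q \rangle$.

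\textbf{Pointwise limit.} First I would show $m(a_m(q) - 1) \to -\langle \phi, q \rangle$. For each $x$ one has the elementary estimate $|e^{-t}-1| \le t$ for $t \ge 0$, so
\[
\bigl| m\bigl(e^{-\phi(x)/m}-1\bigr) \bigr| \le \phi(x),
\]
which is $q$-integrable by assumption. Since $m(e^{-\phi(x)/m}-1) \to -\phi(x)$ pointwise, dominated convergence yields $m(a_m(q)-1) \to -\langle \phi, q\rangle$. Next I would pass from $a_m-1$ to $\ln a_m$ via the expansion $\ln(1+u) = u + O(u^2)$ valid for $u$ close to $0$: since $a_m(q)-1 = O(1/m)$, we have $m\ln a_m(q) = m(a_m(q)-1) + O(m(a_m(q)-1)^2) = m(a_m(q)-1) + O(1/m)$, giving $m\ln a_m(q) \to -\langle \phi, q\rangle$. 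Exponentiating yields \eqref{ee}.

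\textbf{Uniform convergence for bounded $\phi$.} Suppose $0 \le \phi \le M$. The refined inequality $|e^{-t} - 1 + t| \le t^2/2$ for $t \ge 0$ gives
\[
\Bigl| m\bigl(e^{-\phi(x)/m}-1\bigr) + \phi(x) \Bigr| \le \frac{\phi(x)^2}{2m} \le \frac{M^2}{2m},
\]
so integrating against $q$ yields $|m(a_m(q)-1) + \langle \phi,q\rangle| \le M^2/(2m)$, uniformly in $q$. Moreover $1-a_m(q) \le 1 - e^{-M/m} \le M/m$ uniformly, hence for $m$ large the inequality $|\ln(1+u)-u| \le u^2$ (valid for $|u| \le 1/2$) gives $|m\ln a_m(q) - m(a_m(q)-1)| \le M^2/m$ uniformly in $q$. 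Combining the two bounds, $m\ln a_m(q) \to -\langle \phi, q\rangle$ uniformly in $q \in \P$. Since the target values lie in the compact interval $[-M,0]$ and $\exp$ is uniformly continuous there, the uniform convergence transfers to $a_m(q)^m \to e^{-\langle \phi, q\rangle}$.

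\textbf{Expected obstacle.} There is no deep obstacle; the only care needed is in the unbounded case, where one must have the integrable majorant $\phi$ available in order to apply dominated convergence and to conclude $(a_m-1)^2 = O(1/m^2)$ from $m(a_m-1) = O(1)$. Uniformity genuinely requires boundedness of $\phi$, because otherwise the remainders in both the logarithmic and exponential expansions cannot be controlled independently of $q$.
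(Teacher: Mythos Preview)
Your proof is correct, but the route is genuinely different from the paper's. The paper observes that $\langle e^{-\phi/m},q\rangle^m = \mathbb{E}\bigl[\exp\bigl(-\tfrac{1}{m}\sum_{i=1}^m \phi(X_i)\bigr)\bigr]$ for $X_1,X_2,\ldots$ i.i.d.\ with law $q$, then invokes the law of large numbers together with bounded convergence (the integrand lies in $[0,1]$) to get the pointwise limit; for the uniform statement it bounds $\mathbb{E}|Y_m-\langle\phi,q\rangle|$ via a Chebyshev-type split, obtaining a bound depending only on $\|\phi\|_\infty$ and $m$, and then uses that $|e^{-s}-e^{-t}|\le|s-t|$. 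Your argument is purely analytic: Taylor-type inequalities $|e^{-t}-1|\le t$ and $|e^{-t}-1+t|\le t^2/2$, dominated convergence with majorant $\phi$, and the expansion $\ln(1+u)=u+O(u^2)$. The probabilistic proof is conceptually tidy and fits the exchangeable-process theme of the paper; your approach is more elementary (no LLN), and as a bonus delivers explicit $O(1/m)$ error bounds in the uniform case, which the paper's Chebyshev argument does not make quite so transparent.
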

\begin{proof}
Let us consider an i.i.d.~sequence $\{X_i\}_{i \in \mathbb{N}}$ of random variables with law $q$.
Clearly, by using the Law of Large Numbers and the Bounded Convergence Theorem, we have
\[
\begin{split}
 \Big ( \int_\X \exp \Big ( -\frac{\phi(x)}{m} \Big ) q(dx) \Big )^m&= \mathbb{E} \Big [
 \prod_{i=1}^m \exp \Big ( -\frac{\phi(X_i)}{m} \Big )
 \Big ]\\
 &=
 \mathbb{E} \Big [
 \exp \Big ( -\sum_{i=1}^m  \frac{\phi(X_i)}{m} \Big )
 \Big ] \to
 \exp \big ( \mathbb{E}[- \phi(X_1)] \big )
 \end{split}
\]
since $\exp ( -\sum_{i=1}^m  \frac{\phi(X_i)}{m} ) \to \exp  ( \mathbb{E}[- \phi(X_1)] )$ a.s.~as $m \to \infty$.
\\\\
Suppose now that $\phi$ is bounded, say $|\phi(x)|\le K$ for all $x \in \X$; it is enough to prove that $\mathbb{E}[|Y_n - m_q|] \to 0$ as $n \to +\infty$ uniformly with respect to $q$, where $Y_n:= \sum_{i=1}^n  \phi(X_i)/n$ and $m_q:=\mathbb{E}[\phi(X_1)]$.
For every $\varepsilon >0$, given any $q$ we have
\[
 \begin{split}
  \mathbb{E}[|Y_n - m_q|] &=
  \mathbb{E}[|Y_n - m_q|\ident_{\{|Y_n - m_q| \le \varepsilon/2\}}] +
  \mathbb{E}[|Y_n - m_q|\ident_{\{|Y_n - m_q| > \varepsilon/2\}}]\\
  &\le \frac{\varepsilon}{2} + 2KP \big (|Y_n - m_q| > \varepsilon/2 \big )\le \frac{\varepsilon}{2}+
  \frac{8K^3}{n(\varepsilon/2)^2} \le \varepsilon
 \end{split}
\]
if $n \ge \frac{8K^3}{(\varepsilon/2)^3}$ (and this does not depend on $q$).
%
%
%
% Indeed, by Taylor's formula for any positive $\phi>0$,
% $$e^{-\phi}=1- \phi+R,$$
% where $R$ is remainder. Clearly $R>0$ and since
% $R=e^{-\phi}-1+\phi$, we see that $R< \phi$. By Lagrange estimate,
% we obtain with $\xi\in [0,\phi]$.
% $$R=e^{-\xi}{\phi^2\over 2}\leq {\phi^2\over 2}.$$
% Thus, $R\leq \min\{\phi,{\phi^2\over 2}\}$ and so
% $$\exp[-{\phi(x)\over m}]=1-{\phi(x) \over m}+R_m(x),\quad |R_m(x)|\leq
% \min\{ {\phi(x)\over m},{\phi^2(x)\over 2m^2}\}.$$ Thus, if $\langle
% \phi, q \rangle<\infty$, then by dominated convergence theorem
% $$m\int |R_m(x)| q(dx)\to 0,$$
% so that $\langle R_m,q \rangle=o(m^{-1})$ and therefore
% $$
% \langle \exp[-{\phi\over m}], q \rangle^m=\big(1-{\langle \phi, q
% \rangle\over m}+o(m^{-1})\big)^m\to \exp[-\langle \phi, q
% \rangle].$$ It is well known that the convergence
% $$\big(1-{x\over m}\big)^m\to e^{-x}$$
% is uniform over every bounded interval. If $\phi$ is bounded, say
% $\phi(x)\leq M$, then for every $q$, $\langle \phi, q \rangle\leq M$
% and $mR_m\leq {M^{2}\over 2m}$. So the uniform convergence over $q$
% in \eqref{ee}) follows.
\end{proof}
%----------
\subsection{The case $\lambda\geq 1$}
Let us begin with the case $\lambda\geq 1$.
We establish the convergence of the measure $\bar \pi_n$ which was defined in \eqref{eq:barpin}. The next theorem states that when $\lambda>1$, then the influence of fitness vanishes, and the limit process $X$ equals the breeding process $\xi$. When $\lambda=1$, then the limit process is another infinitely exchangeable process whose prior measure differs from the breeding one $\pi$, and depends on $\phi$ as well as on $\pi$.
\begin{theorem}\label{thm1}
Let the fitness function be as in \eqref{wn} where $\phi$ is  non-negative, measurable and bounded. Suppose $\pi_n=\pi$; and let $\lambda\ge1$. Then the following convergences hold:
\begin{enumerate}[1)]
  \item If $\lambda=1$, then $\bp \Rightarrow \p$,
and $P_n\to P^*$ in the sense of \eqref{fi-con2}, where
\[
\begin{split}
& \p(E)  :=\frac1Z{\int_E}\exp[-\langle \phi, q
\rangle]\pi(dq),\quad {\rm where}\quad Z:=\int_\P \exp[-\langle \phi, q
\rangle]\pi(dq),\\
& P^*(A_1\times \cdots \times A_m) =\int_{\P} \prod_{i=1}^m q(A_i)\p(dq),
\quad \forall m\in\mathbb N, A_i\in\B(\X).
\end{split}
\]
If, in addition, $\X$ is compact, then $Q_n\Rightarrow \p$.
   %----------------
  \item If $\lambda>1$, then $\bp \Rightarrow \pi$
and $P_n\to P_\xi$ in the sense of \eqref{fi-con2}.
  If, in addition, $\X$ is compact, then $Q_n\Rightarrow \pi$.
\end{enumerate}
\end{theorem}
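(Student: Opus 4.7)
The plan is to apply Theorem \ref{thmP} together with Corollary \ref{cor1}: the latter already guarantees the hypotheses on $r_{q,n}$ and $r_q$ (with $r_q=q$ since $\lambda>0$), so everything reduces to proving the weak convergence of $\bar{\pi}_n$ to the announced limit. Then $P_n \to P^*$ in the sense of \eqref{fi-con2} follows by plugging $r_q=q$ into \eqref{eq:p*}, and the statement about $Q_n$ when $\X$ is compact follows from Theorem \ref{thmQ} (note $Q=\bar\pi r^{-1}=\bar\pi$ because $r$ is the identity map).

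For case 1 ($\lambda=1$), the density of $\bar{\pi}_n$ with respect to $\pi$ is $\frac{1}{Z_n}\langle w_n,q\rangle^n$ with $w_n=\exp[-\phi/n]$. By Proposition \ref{Pee} (applied with $m=n$), $\langle w_n,q\rangle^n \to \exp[-\langle\phi,q\rangle]$ uniformly in $q\in\P$, and $0\leq \langle w_n,q\rangle^n\leq 1$. Bounded convergence then yields $Z_n\to Z:=\int_\P \exp[-\langle\phi,q\rangle]\pi(dq)$, and $Z>0$ since $\phi$ is bounded. To establish $\bar\pi_n\Rightarrow\bar\pi$ I will show that for every $f\in C_b(\P)$,
\[
\int_\P f(q)\,\bar\pi_n(dq)=\frac{1}{Z_n}\int_\P f(q)\langle w_n,q\rangle^n \pi(dq)\longrightarrow \frac{1}{Z}\int_\P f(q)\exp[-\langle\phi,q\rangle]\pi(dq)=\int_\P f\,d\bar\pi,
\]
again by bounded (or dominated) convergence: uniform convergence of the integrand together with boundedness by $\|f\|_\infty$ suffices.

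For case 2 ($\lambda>1$), the same density representation applies with $w_n=\exp[-\phi/n^\lambda]$. The key elementary estimate is
\[
\exp\!\left[-\frac{\|\phi\|_\infty}{n^{\lambda-1}}\right]\leq \Big\langle \exp\!\left[-\phi/n^\lambda\right],q\Big\rangle^n\leq 1,
\]
uniformly in $q$, which forces $\langle w_n,q\rangle^n\to 1$ uniformly on $\P$ since $\lambda-1>0$. Consequently $Z_n\to 1$ and, by exactly the same bounded-convergence argument as in case 1, $\int f\,d\bar\pi_n \to \int f\,d\pi$ for every $f\in C_b(\P)$; hence $\bar\pi_n\Rightarrow\pi$.

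The conclusions about $P_n$ and $Q_n$ are then immediate: Corollary \ref{cor1}(3) gives $P_n\to P^*$ in the sense of \eqref{fi-con2} with $P^*$ as in \eqref{eq:p*}, and since $r_q=q$ this is exactly the formula in the statement (respectively $P_\xi$ when $\bar\pi=\pi$); when $\X$ is compact, Theorem \ref{thmQ} gives $Q_n\Rightarrow Q=\bar\pi r^{-1}=\bar\pi$, i.e.\ $Q_n\Rightarrow\bar\pi$ if $\lambda=1$ and $Q_n\Rightarrow\pi$ if $\lambda>1$. I expect the only mildly delicate point to be the justification of passing the uniform convergence $\langle w_n,q\rangle^n\to$ (limit) through both the numerator and the normalizing denominator simultaneously; once uniform convergence is in hand this is routine, so the substantive content is really Proposition \ref{Pee} (for $\lambda=1$) and the elementary sandwich bound above (for $\lambda>1$).
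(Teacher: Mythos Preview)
Your proposal is correct and follows essentially the same strategy as the paper: reduce everything to the convergence of $\bar\pi_n$ via Corollary~\ref{cor1}, then invoke Theorem~\ref{thmP} for $P_n$ and Theorem~\ref{thmQ} for $Q_n$. The only cosmetic differences are that the paper proves $\bar\pi_n(E)\to\bar\pi(E)$ for every measurable $E$ (setwise convergence, which is stronger than needed) rather than testing against $f\in C_b(\P)$ as you do, and for $\lambda>1$ the paper bounds $\langle w_n,q\rangle^n$ via Proposition~\ref{Pee} with an extra factor of $2$, whereas your direct sandwich $\exp[-\|\phi\|_\infty/n^{\lambda-1}]\le\langle w_n,q\rangle^n\le 1$ is cleaner and entirely sufficient.
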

%---------
\begin{proof}
Before explicitly dealing with the two cases, we note that by Corollary \ref{cor1},  we only need to establish the convergence of $\bp$.
Since $r_q=q$, from Theorem \ref{thmP} we will get the convergence of $P_n$ to the limiting process.
\begin{enumerate}[1)]
  \item
Since for any $q$ and any $n$, it
holds $\langle \exp[-{\phi\over n}], q \rangle^n\leq 1$, we obtain
from \eqref{ee} and the Bounded Convergence Theorem that, for any $E\in \B(\P)$,
\begin{equation}\label{Ekoo}
Z_n \bar \pi_n(E)=\int_E \langle \exp[-{\phi\over n}], q \rangle^n \pi (dq)\to \int_E
\exp[-\langle \phi, q \rangle]\pi(dq).\end{equation}
%----------
From \eqref{Ekoo}, it follows that
$\bp(E)\to {\bar\pi}(E),$
meaning that  $\bp\Rightarrow \p$. Since the assumptions of Theorem \ref{thmP} are fulfilled with $w(x)=1$, then for compact $\X$   Theorem \ref{thmQ} implies $Q_n\Rightarrow \p$.
%--------------------
\item When $\lambda>1$, then by Proposition~\ref{Pee}, eventually as $n \to \infty$
$$ 1 \leftarrow  \big ( 2\exp(-\langle \phi,q \rangle) \big )^{1/n^{\lambda-1}} \ge
\langle \exp[-{\phi\over n^{\lambda}}], q \rangle^n
\ge
\Big ( \frac{\exp(-\langle \phi,q \rangle)}{2} \Big )^{1/n^{\lambda-1}}
\to 1$$
and by dominated
convergence, again, for any measurable $E$
\begin{equation}\label{Ekoo2}
\int_E \langle \exp[-{\phi\over n}], q \rangle^n \pi (dq)\to
\pi(E).\end{equation} Therefore $\bp\Rightarrow \pi$.
The convergence $Q_n\Rightarrow \pi$ is a consequence of Theorem \ref{thmQ}.
\end{enumerate}
\end{proof}
%---------
\paragraph{Remark.} Observe that for the weak convergence of $\bp$, the boundedness of $\phi$ is not needed. However, it is needed for the uniform convergence of $w_n\to w$, hence for existence of the limit process (Theorem \ref{thmP}) and for the weak convergence of $Q_n$ (Theorem \ref{thmQ}). Theorem \ref{thm1} is a direct generalization of
% Theorem 5.1  in
Theorem 5.1~(2)~and~(3) in \cite{article1}, and no additional assumptions are imposed.

%------------------------
\subsection{Case $\lambda\in [0,1)$}
%--------
\paragraph{Preliminaries: densities and powers.}
Let $\pi$ be a finite measure (not necessarily  a probability measure) on $\B(\P)$. Let  ${\cal S}$ be the support of $\pi$. For a measurable function $f: {\cal P}\to \mathbb{R}$,
$$\|f\|_{\infty}:={\rm ess sup} (f):=\inf \{c: |f|\leq c \quad \pi-\text{a.e.}\}.$$
If $f$ is continuous then
$\|f\|_{\infty}=\sup_{q\in {\cal S}}|f(q)|.$
Also recall that for any $0<m<\infty$
$$\|f\|_m:=\Big(\int_\P |f(q)|^m \pi(dq)\Big)^{1\over m}.$$
If $f$ is essentially bounded, $m$ grows and $\pi$ is a probability measure, then $\|f\|_m\nearrow
\|f\|_{\infty}< \infty$.
%\textcolor{blue}{
Then it follows that $\|f\|_m\to \|f\|_{\infty}$ also when $\pi$ is a finite (but not necessarily a probability) measure.
%}
\\\\
%-------------
We now consider
%continuous
measurable functions $f_n, f: \P\to  \mathbb{R}^+$ , such that $f_n$ are essentially bounded and $\|f_n- f\|_\infty \to 0$ uniformly (whence $f$ is essentially bounded). Let $m_n\to \infty$ be an increasing sequence. We define a sequence of probability measures
$\mu_n$ on $\B(\P)$, where
$$\mu_n(E):=\int_E h_n(q) \pi(dq),\quad   h_n:={f_n^{m_n}\over \int_\P f_n^{m_n} d\pi}=\Big({f_n\over
\|f_n\|_{m_n}}\Big)^{m_n}.$$
%----------
Due to the $\|\cdot\|_\infty$-convergence and essential boundedness of $f$, the functions $f_n$ are essentially bounded as well, thus (recall $\pi$ is finite)
 $\int_\P  f_n^{m_n} d\pi<\infty$ for every $n$. In what follows, let
\begin{equation}\label{S}
{\cal S}^*:=\{q\in {\cal S}: f(q)=\|f\|_{\infty}\},\quad {\cal S}_{\delta}^*:=\{q\in {\cal S}: f(q)> \|f\|_{\infty}-\delta\}.\end{equation}
 The
following proposition is a generalization of \cite[Proposition 5.1]{article1}.
%-------------------
\begin{proposition}\label{point2}
Let $\pi$ be a finite measure on $\P$.
Let, $f_n$ and  $f$ be nonegative measurable functions on $\P$. Assume that $f_n$ are essentially bounded and $\|f_n-f\|_\infty \to 0$ as $n \to \infty$. % and {let $\pi$ be a finite measure on $\P$.
Then for every $\delta>0$, $\mu_n\big({\cal
S}_{\delta}^*\big)\to 1$. Moreover, if, for some $\mu\in\P$, $\mu \big (\bigcap_{\delta>0} \overline{\mathcal{S}_\delta^*} \setminus \mathcal{S}^* \big )=0$ %is closed
(for instance if $f$ is upper semicontinuous) and
$\mu_n \Rightarrow \mu$ then $\mu(\mathcal{S}^*)=1$.
\end{proposition}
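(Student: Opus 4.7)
The plan is to prove both assertions by controlling the ratio defining the density $h_n$ and then transferring the resulting concentration of $\mu_n$ to the weak limit $\mu$ via Portmanteau. Assume without loss that $\|f\|_\infty>0$ (else $\mathcal{S}_\delta^*=\mathcal{S}$ for every $\delta>0$ and everything is trivial). For the first assertion I would write
\[
\mu_n(\mathcal{S}_\delta^{*c})=\frac{\int_{\mathcal{S}_\delta^{*c}}f_n^{m_n}\,d\pi}{\int_\P f_n^{m_n}\,d\pi}
\]
and bound numerator and denominator separately. Fix $n$ so large that $\|f_n-f\|_\infty<\delta/4$. On $\mathcal{S}_\delta^{*c}\cap\mathcal{S}$ we have $f\le\|f\|_\infty-\delta$ by definition of $\mathcal{S}_\delta^*$, hence $f_n\le\|f\|_\infty-3\delta/4$ $\pi$-a.e., giving the numerator upper bound $(\|f\|_\infty-3\delta/4)^{m_n}\pi(\P)$. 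Restricting the denominator integration to $\mathcal{S}_{\delta/4}^*$, where $f>\|f\|_\infty-\delta/4$, yields $f_n>\|f\|_\infty-\delta/2$ $\pi$-a.e., hence the denominator lower bound $(\|f\|_\infty-\delta/2)^{m_n}\pi(\mathcal{S}_{\delta/4}^*)$. The key nondegeneracy $\pi(\mathcal{S}_\eta^*)>0$ for every $\eta>0$ is immediate from the definition of essential supremum (else $\|f\|_\infty-\eta$ would itself be an essential upper bound). Taking the quotient,
\[
\mu_n(\mathcal{S}_\delta^{*c})\le\left(\frac{\|f\|_\infty-3\delta/4}{\|f\|_\infty-\delta/2}\right)^{m_n}\cdot\frac{\pi(\P)}{\pi(\mathcal{S}_{\delta/4}^*)}\longrightarrow 0
\]
for small $\delta$, since $m_n\to\infty$ and the base of the power is strictly less than $1$.

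For the second assertion, the first part gives $\mu_n(\overline{\mathcal{S}_\delta^*})\ge\mu_n(\mathcal{S}_\delta^*)\to 1$. Since $\overline{\mathcal{S}_\delta^*}$ is closed, the Portmanteau theorem yields $\mu(\overline{\mathcal{S}_\delta^*})\ge\limsup_n\mu_n(\overline{\mathcal{S}_\delta^*})=1$, hence $\mu(\overline{\mathcal{S}_\delta^*})=1$ for every $\delta>0$. As $\delta\downarrow 0$ the family $\{\overline{\mathcal{S}_\delta^*}\}$ is monotone nonincreasing, so continuity from above of the probability measure $\mu$ yields $\mu\bigl(\bigcap_{\delta>0}\overline{\mathcal{S}_\delta^*}\bigr)=1$. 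Since $\mathcal{S}^*\subseteq\bigcap_\delta\overline{\mathcal{S}_\delta^*}$, the standing hypothesis $\mu\bigl(\bigcap_\delta\overline{\mathcal{S}_\delta^*}\setminus\mathcal{S}^*\bigr)=0$ then forces $\mu(\mathcal{S}^*)=1$.

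For the parenthetical sufficient condition ``$f$ upper semicontinuous'': u.s.c.\ makes $\{f\ge\|f\|_\infty-\delta\}$ closed, and together with the closedness of the support $\mathcal{S}$ we obtain $\overline{\mathcal{S}_\delta^*}\subseteq\mathcal{S}\cap\{f\ge\|f\|_\infty-\delta\}$. Intersecting over $\delta>0$ yields $\bigcap_\delta\overline{\mathcal{S}_\delta^*}\subseteq\mathcal{S}\cap\{f\ge\|f\|_\infty\}$, whose set-difference from $\mathcal{S}^*$ is $\mathcal{S}\cap\{f>\|f\|_\infty\}$, a $\pi$-null set by the very definition of essential supremum.

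The main obstacle is securing the denominator lower bound: the entire argument rests on $\pi(\mathcal{S}_\eta^*)>0$ for every $\eta>0$ together with careful tracking of the uniform bounds $\|f_n-f\|_\infty\to 0$, so as to preserve a strict multiplicative gap between the numerator and denominator bases. Once this Laplace-type concentration is in hand, the Portmanteau step and continuity-of-measure arguments in the second assertion are routine.
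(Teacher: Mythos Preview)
Your proof of both main assertions is correct and follows essentially the same route as the paper: for the first part the paper normalizes via $\|f_n\|_{m_n}\to\|f\|_\infty$ and bounds $h_n$ pointwise on $\mathcal{S}\setminus\mathcal{S}_\delta^*$, while you bound numerator and denominator of the ratio separately using $\pi(\mathcal{S}_{\delta/4}^*)>0$, but these are the same Laplace-type concentration idea; for the second part both proofs are the Portmanteau step on the closed sets $\overline{\mathcal{S}_\delta^*}$ (the paper phrases it through integration of the bounded continuous function $\min(d(\cdot,\mathcal{S}_\delta^*),1)$, which is equivalent). Your ``for small $\delta$'' is harmless since $\delta\mapsto\mathcal{S}_\delta^*$ is increasing.

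One remark on the parenthetical sufficient condition: you correctly obtain $\bigcap_\delta\overline{\mathcal{S}_\delta^*}\setminus\mathcal{S}^*\subseteq\mathcal{S}\cap\{f>\|f\|_\infty\}$ and observe this is $\pi$-null, but the hypothesis demands $\mu$-nullity, which upper semicontinuity alone does not give (e.g.\ $f=\ident_{\{q_0\}}$ with $q_0\in\mathcal{S}$, $\pi(\{q_0\})=0$ is u.s.c., yet $\mu=\delta_{q_0}$ charges the exceptional set). The paper does not prove this parenthetical either; in the paper's applications $f$ is actually continuous, and for continuous $f$ one has $\|f\|_\infty=\sup_{\mathcal{S}}f$, so $\mathcal{S}\cap\{f>\|f\|_\infty\}=\emptyset$ and your containment already yields the empty set.
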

%-------------
\begin{proof} Since $\|f_n\|_{\infty}<\infty$
and $\|f_n-f\|_\infty \to 0$ then $\|f\|_{\infty}<\infty$.
By assumption, $\pi$ is a finite measure. Since $f_n$ converges to
$f$ uniformly, it follows that $\|f_n-f\|_{\infty}\to 0$ and so $\|
f_n\|_{\infty}\to \| f\|_{\infty}.$ For every $m$,
$$|\|f_n\|_m-\|f\|_m |\leq \|f_n-f\|_m\leq \pi(\P)^{1\over m}\|f_n-f\|_{\infty}\to 0.$$
Since $\|f\|_{m_n}\to \|f\|_{\infty}$, we have
\begin{align*}
\big|\|f_n\|_{m_n}-\|f\|_{\infty}\big|&\leq \big|\|f_n\|_{m_n}-\|f\|_{m_n}\big|+\big|\|f\|_{m_n}-|f\|_{\infty}\big|\\
&\leq \|f_n-f\|_{m_n}+\big|\|f\|_{m_n}-|f\|_{\infty}\big|\\
&\leq  \pi(\P)^{1\over m_n}\|f_n-f\|_{\infty}+\big|\|f\|_{m_n}-\|f\|_{\infty}\big|\to 0.
\end{align*}
 Now fix $\delta>0$ and note that $${\cal S}\setminus{\cal S}^*_{\delta}=\{q: f(q)\leq
 \|f\|_{\infty}-\delta\}. $$
Define $\delta':=\delta /\|f\|_{\infty}$. Then
\begin{align*}
\esssup_{q} \Big ( \ident_{{\cal S}\setminus{\cal S}^*_{\delta}} \frac{f_n(q)}{ \|f_n\|_{m_n}} \Big )&=\esssup_{q} \Big (\ident_{{\cal S}\setminus{\cal S}^*_{\delta}} \frac{f(q)+(f_n(q)-f(q))}{\|f_n\|_{m_n}} \Big )\\
&=
\esssup_{q} \Big ( \ident_{{\cal S}\setminus{\cal S}^*_{\delta}}\frac{f(q)+(f_n(q)-f(q))}{\|f\|_{\infty}}\frac{\|f\|_{\infty}}{\|f_n\|_{m_n}} \Big )\\
&\leq \esssup_{q} \Big ({\ident_{{\cal S}\setminus{\cal S}^*_{\delta}}} \frac{f(q)}{
\|f\|_{\infty}}\frac{\|f\|_{\infty}}{
\|f_n\|_{m_n}} \Big )+\frac{\|f_n-f\|_{\infty}}{\|f_n\|_{m_n}}\leq
1-{\delta'\over 2},
\end{align*}
provided $n$ is big enough. Thus,
$$
\esssup_{q} (\ident_{{\cal S}\setminus{\cal S}^*_{\delta}} h_n(q))\leq
\Big(1-{\delta'\over 2}\Big)^{m_n}\to 0,
$$
so that $\mu_n({\cal
S}^{*}_{\delta})\to 1$.
\\\\
Suppose now that $\mu(\bigcap_{\delta>0} \overline{\mathcal{S}_\delta^*} \setminus \mathcal{S}^*)=0$
%is closed
and $\mu_n \Rightarrow \mu$. Given any $\rho \in \P$ and $A \subset \P$, define $d(\rho, A):=\inf_{\nu \in A} d(\rho, \nu)$ where $d$ is the Prokhorov metric. It is clear that $\rho \mapsto d(\rho, A)$ is a nonnegative, continuous function such that $d(\rho,A)=0$ if and only if $\rho \in \bar A$. Whence, $g_A(\rho):=\min(d(\rho, A), 1)$ is a nonnegative, bounded continuous function.
\\
Since $\mu_n \Rightarrow \mu$ we have
\[
 \int_\P g_{\mathcal{S}_\delta^*}(\rho) \mu(d\rho)=\lim_{n \to +\infty} \int_\P g_{\mathcal{S}_\delta^*}(\rho)\mu_n(d\rho) \le
 \lim_{n \to +\infty}
 \mu_n(\P \setminus\overline{\mathcal{S}_\delta^*}) =0,
\]
whence $\mu(g_{\mathcal{S}_\delta^*}=0)=1$,
that is $\mu(\overline{\mathcal{S}_\delta^*})=1$. By the continuity of the measure
$\mu$ and $\mu \big (\bigcap_{\delta>0} \overline{\mathcal{S}_\delta^*} \setminus \mathcal{S}^* \big )=0$, we have $\mu(\mathcal{S}^*)=1$.
\end{proof}
% %-----------
% \noindent [Here the continuity of $f_n$ and $f$ allows us to replace esssup by sup and probably continuity-assumption can be relaxed. Perhaps, instead of continuity we can assume that $\|f\|_{\infty} =\sup_q f(q)$ and $\|f_n-f\|_{\infty}=\sup_q |f_n(q)-f(q)|$.]
% \\\\
\\\\
Roughly speaking, according to Proposition \ref{point2}, in order to have $\mu({\cal S}^{*})=
1$, we need the weak convergence $\mu_n \Rightarrow \mu$. We shall now show that under some mild conditions, when
${\cal S}^*$ consists of one measure $q^*$ then the weak convergence of $\mu_n$ follows from $\mu({\cal S}^{*})=
1$ and the limit is $\delta_{q^*}$. In the following corollary, let
$B(q^*,\e)$ be an open ball in ${\cal P}$ centered at $q^*$
and having  radius $\e$.
%-------------
\begin{corollary}\label{point} Let the assumptions of Proposition \ref{point2} hold.  Suppose ${\cal S}^*=\{q^*\}$. If, for any
$\e>0$ there exists $\delta>0$ such that
\begin{equation}\label{balls}
B(q^*,\e)\supseteq {\cal
S}^*_{\delta},
\end{equation}
then $\mu_n\Rightarrow \delta_{q^*}.$
\end{corollary}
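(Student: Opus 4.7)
The plan is to deduce the weak convergence $\mu_n \Rightarrow \delta_{q^*}$ directly from the Portmanteau theorem by showing that $\mu_n$ concentrates arbitrarily much mass on every open neighborhood of $q^*$. Since $\delta_{q^*}$ puts all of its mass at the single point $q^*$, the key target is the statement
\[
\mu_n \big (B(q^*, \varepsilon) \big ) \to 1 \qquad \text{for every } \varepsilon >0.
\]

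First, fix $\varepsilon>0$. By the hypothesis \eqref{balls}, there exists $\delta>0$ with $\mathcal{S}^*_\delta \subseteq B(q^*,\varepsilon)$, and hence
\[
\mu_n \big (B(q^*,\varepsilon) \big ) \ge \mu_n(\mathcal{S}^*_\delta).
\]
Applying the first conclusion of Proposition \ref{point2} (which needs only the uniform essential convergence $\|f_n - f\|_\infty \to 0$ and finiteness of $\pi$, both already assumed), one gets $\mu_n(\mathcal{S}^*_\delta) \to 1$ as $n \to \infty$. Combining these two displays yields $\mu_n(B(q^*,\varepsilon)) \to 1$.

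Finally, I would conclude by Portmanteau. Let $F \subseteq \mathcal{P}$ be a closed set. If $q^* \in F$, then $\delta_{q^*}(F)=1 \ge \limsup_n \mu_n(F)$ trivially. If $q^* \notin F$, then since $\mathcal{P}\setminus F$ is open and contains $q^*$, there exists $\varepsilon>0$ with $B(q^*,\varepsilon) \cap F = \emptyset$, so
\[
\limsup_n \mu_n(F) \le \limsup_n \big (1 - \mu_n(B(q^*,\varepsilon)) \big )= 0 = \delta_{q^*}(F).
\]
By the Portmanteau theorem this gives $\mu_n \Rightarrow \delta_{q^*}$, as required. There is no real obstacle here: the work is entirely carried by Proposition \ref{point2}, and the hypothesis \eqref{balls} is precisely the ingredient needed to translate concentration on $\mathcal{S}^*_\delta$ into concentration on metric balls around $q^*$.
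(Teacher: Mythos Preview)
Your proof is correct and follows essentially the same route as the paper: both use hypothesis \eqref{balls} to get $\mu_n(B(q^*,\varepsilon)) \ge \mu_n(\mathcal{S}^*_\delta)$, invoke Proposition \ref{point2} to obtain $\mu_n(B(q^*,\varepsilon)) \to 1$, and then conclude weak convergence to $\delta_{q^*}$. The only difference is that the paper leaves the final implication as ``this implies easily that $\mu_n \Rightarrow \delta_{q^*}$,'' whereas you spell out the Portmanteau argument explicitly.
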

\begin{proof} By hypothesis, for every
$\e>0$ there exists $\delta>0$ so that
$$\mu_n(B(q^*,\e))\geq \mu_n({\cal S}^{*}_{\delta}).$$ By Proposition \ref{point2}, $\mu_n({\cal S}^{*}_{\delta})\to 1$; whence, for every $\e>0$, $\mu_n(B(q^*,\e))\to 1$. This implies easily that
$\mu_n\Rightarrow \delta_{q^*}.$
\end{proof}
%----------
\paragraph{The result.}\label{par:result}
In this section, we consider a continuous $\phi$ having a unique minimum $x_o$. Thus, in what follows,
$$x_o:=\arg\inf_x \phi(x),\quad \phi_o:=\phi(x_o)=\inf_x \phi(x).$$
The following assumption is natural and assumes that  if
$\phi(x)$ is slightly bigger than the minimum $\phi_o$, then $x$ must be
close to $x_o$. It also guarantees that $x_o$ is the unique minimum and the convergence $\phi(x_n)\to \phi_o$ implies $x_n\to x_o$.
%------------------
\begin{assumption}\label{ass:A}
For every $\e>0$  there exist $\delta>0$ so that
$\{x: \phi(x)-\phi_o\leq \delta\}\subset
B(x_o,\e)$, where $B(x_o,\e)$ is an open ball centered in $x_o$ and having radius $\e$.
\end{assumption}
%----------------
Observe that if $\phi$ is continuous and $\X$ compact, then Assumption~\ref{ass:A} holds, since the minimum is unique.
%--------------
\begin{theorem}\label{thm12} Let the fitness function be as in \eqref{wn} where $\phi$ is  non-negative, continuous, bounded and satisfies Assumption~\ref{ass:A};
and let $\lambda\in [0,1)$. Suppose $\pi_n=\pi$ and that the support of
$\pi$ contains the measure $\delta_{x_o}$. Then $\bp\Rightarrow
\delta_{q^*}$, where $q^*=\delta_{x_o}$. Moreover convergence \eqref{fi-con2} holds with the limit
 process being degenerate and having one almost sure path $(x_o,x_o,\ldots)$.
If, in addition, $\X$ is
compact, then $Q_n\Rightarrow \delta_{q^*}$.\end{theorem}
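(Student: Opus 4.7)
The idea is to recast $\bar{\pi}_n$ into the framework of Proposition~\ref{point2}/Corollary~\ref{point} and then deduce the process and $Q_n$ convergences from Corollary~\ref{cor1} and Theorem~\ref{thmQ}. The algebraic key is the factorisation
\[
 \langle w_n, q\rangle^n = \bigl(\langle \exp(-\phi/n^\lambda), q\rangle^{n^\lambda}\bigr)^{n^{1-\lambda}},
\]
which suggests setting $f_n(q):=\langle \exp(-\phi/n^\lambda), q\rangle^{n^\lambda}$, $f(q):=\exp(-\langle \phi, q\rangle)$, and $m_n:=n^{1-\lambda}$. Boundedness of $\phi$ together with the uniform statement in Proposition~\ref{Pee} gives $\|f_n-f\|_\infty\to 0$, while $\lambda<1$ gives $m_n\to\infty$. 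Then $\bar{\pi}_n(E) = \int_E f_n^{m_n}\,d\pi\,\big/\int_\P f_n^{m_n}\,d\pi$ matches the hypothesis of Proposition~\ref{point2}.

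Since $\phi$ is bounded continuous, $f$ is continuous on $\P$ and $\|f\|_\infty = \sup_{q\in\mathcal{S}} f(q) = \exp(-\phi_o)$, attained at $\delta_{x_o}\in\mathcal{S}$. As $\phi\ge\phi_o$ pointwise, $\langle\phi,q\rangle=\phi_o$ forces $q$ to concentrate on $\{\phi=\phi_o\}$, and Assumption~\ref{ass:A} reduces this set to $\{x_o\}$, so $\mathcal{S}^* = \{\delta_{x_o}\} = \{q^*\}$. The principal technical step is verifying the ball condition~\eqref{balls}. Given $\e>0$, I note that $f(q) > \|f\|_\infty - \delta$ is equivalent to $\langle \phi-\phi_o,\,q\rangle < \eta(\delta)$ for some $\eta(\delta)\downarrow 0$. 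Markov's inequality applied to $\phi-\phi_o\ge 0$ yields $q(\{\phi-\phi_o<\sqrt{\eta}\})\ge 1-\sqrt{\eta}$, and Assumption~\ref{ass:A} guarantees $\{\phi-\phi_o<\sqrt{\eta}\}\subset B(x_o,\e)$ for $\eta$ small; choosing $\delta$ so small that also $\sqrt{\eta(\delta)}\le\e$ produces $q(B(x_o,\e))\ge 1-\e$, which by the standard definition of the Prokhorov metric implies $d_P(q,\delta_{x_o})\le\e$. Corollary~\ref{point} then delivers $\bar{\pi}_n\Rightarrow\delta_{q^*}$.

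The rest is immediate. By Corollary~\ref{cor1}, $P_n$ converges in the sense of \eqref{fi-con2} to the exchangeable process with directing measure $\delta_{q^*}$, i.e.\ $P^*(A_1\times\cdots\times A_m) = r_{q^*}^m(A_1\times\cdots\times A_m)$. For $\lambda>0$ one has $r_{q^*}=q^*=\delta_{x_o}$, while for $\lambda=0$, $r_{q^*}$ is the probability proportional to $w\,d\delta_{x_o}$, which again equals $\delta_{x_o}$ since $w(x_o)>0$; in either case the limit process is a.s.\ the constant path $(x_o,x_o,\ldots)$. When $\X$ is compact, Theorem~\ref{thmQ} combined with $\bar{\pi}_n\Rightarrow\delta_{q^*}$ and $r(\delta_{x_o})=\delta_{x_o}$ yields $Q_n\Rightarrow\delta_{q^*}$. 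The only delicate point in the whole argument is the ball condition, where Markov's inequality, Assumption~\ref{ass:A}, and the Prokhorov metric must be calibrated against one another.
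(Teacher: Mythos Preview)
Your argument for $\lambda\in(0,1)$ is essentially the paper's proof: the same choice of $f_n,\,f,\,m_n$, the same appeal to Proposition~\ref{Pee} for uniform convergence, and the same use of Proposition~\ref{point2}/Corollary~\ref{point}. Your Markov-inequality verification of the ball condition~\eqref{balls} is equivalent to the paper's integral estimate $g(\delta)\ge\int_{B^c}(\phi-\phi_o)\,dq\ge\delta_o\,q(B^c)$ --- both are the same Chebyshev-type bound, just parametrised differently (you split at level $\sqrt{\eta}$, the paper at the level $\delta_o$ supplied by Assumption~\ref{ass:A}). The deductions for the limit process and for $Q_n$ via Corollary~\ref{cor1} and Theorem~\ref{thmQ} also match.

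There is, however, a genuine gap at $\lambda=0$. With $\lambda=0$ you have $n^\lambda=1$, so your $f_n(q)=\langle e^{-\phi},q\rangle$ is constant in $n$ and does \emph{not} converge to $f(q)=\exp(-\langle\phi,q\rangle)$; Proposition~\ref{Pee} needs $m\to\infty$ and simply does not apply. The paper treats $\lambda=0$ separately by taking $f_n=f=\langle e^{-\phi},q\rangle$ (so $\|f_n-f\|_\infty=0$ trivially) and $m_n=n$. The identification $\mathcal{S}^*=\{\delta_{x_o}\}$ and the ball condition then require the analogue of Assumption~\ref{ass:A} for $w=e^{-\phi}$ rather than for $\phi$ itself, which the paper notes follows immediately. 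Once you make this small case split, your proof is complete and coincides with the paper's.
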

%---------------
%Thanks to Corollary \ref{cor1}, Theorem \ref{thm12} implies that all the assumptions of Theorem \ref{thmP} are fulfilled and so the convergence \eqref{fi-con2} holds with the limit
% process being degenerate and having one possible path $(x_o,x_o,\ldots)$, a.s.. In other words, only the fittest genotype survives.\\\\
%------------------------------
\begin{proof} Let us start with the case $\lambda \in (0,1)$ and define
$$f_n(q)=\langle \exp[-{\phi\over n^{\lambda}}], q
\rangle^{n^{\lambda}},\quad f(q)=\exp[-\langle \phi, q \rangle ].$$
Since $\phi$ is continuous and bounded, we obtain that $q\mapsto
\langle \phi, q \rangle$ is continuous and so is $f$. Also $f_n$ is continuous for every $n$.
 Assuming that
$\phi$ is bounded above, we obtain by Proposition \ref{Pee} that for
$\lambda>0$, $\|f_n-f\|_{\infty}\to 0$. Since $\lambda\in (0,1)$, we
take $m_n=n^{1-\lambda}$. Then
$$h_n(q):={\langle \exp[-{\phi\over n^{\lambda}}], q
\rangle^{n}\over Z_n}={f_n^{m_n}(q)\over Z_n}$$ so that $\mu_n=\bp$. Recall that ${\cal S}$ is the support of $\bp$. By definition
$${\cal S}^*= \arg\max_{q\in {\cal S}} f(q)=\arg\min_{q\in {\cal S}}\langle \phi, q \rangle.$$
Since $x_o$ is the unique minimum of $\phi$ and $q^*\in {\cal S}$,  then ${\cal
S}^*=\{q^*\}$ and  $${\cal
S}^*_{\delta}=\{q\in {\cal S}^*: \langle \phi, q \rangle-\phi_o\leq g(\delta)\}, \quad \textrm{where } g(\delta):=-\ln(1-\delta \exp(-\phi(x_o))) .$$
Proposition \ref{point2} applies and so for every $\delta>0$, it holds $\bp({\cal
S}^*_{\delta})\to 1$. In order to
apply Corollary \ref{point}, we have to check \eqref{balls}.  By the definition of Prokhorov metric,
the ball $B(q^*,\e)$ consist of all measures $q$ such that outside the
$\e$-ball $B(x_o,\e)$ the measure $q$ has mass at most
$\e$, so that
$$B(q^*,\e)=\{q\in {\cal P}: q\big(B(x_o,\e)\big)\geq
1-\e\}.$$ Therefore, we have to show the
following: for every $\e>0$, there exists $\delta>0$ such
that if a measure $q$ is such that $\langle \phi, q \rangle-\phi_o\leq
g(\delta)$, then it must hold that $q\big(B(x_o,\e)\big)\geq
1-\e$. Let $\e>0$ be fixed. By Assumption~\ref{ass:A}, there exists $\delta_o=\delta_o(\e)>0$ so that
$\{x: \phi(x)-\phi_o\leq \delta_o\}\subset B(x_o,\e)=:B.$ Take $\delta$ such that
${g(\delta) /\delta_o} \le \e.$
Suppose now that the measure $q$ satisfies $\langle \phi, q \rangle-\phi_o\leq
g(\delta)$. Then
$$
g(\delta)\geq \langle \phi, q \rangle-\phi_o = \int_\X(\phi-\phi_o)dq \geq  \int_{B^c}(\phi-\phi_o) dq\geq \delta_o (1-q(B)),
$$
whence $ q(B)\geq 1- {g(\delta) / \delta_o}=1-\e.$
By Corollary \ref{point}, $\bp\Rightarrow \delta_{q^*}$.\\\\
%----
Consider the case $\lambda=0$. Take $f_n(q)=f(q)=\langle e^{-\phi}, q \rangle$. Since $\phi$ is continuous, then $f$ is continuous. The uniform convergence is trivial and Proposition \ref{point2} applies.
As previously, ${{\cal S}}^*=\{q^*\}$. We now have
$${\cal S}^*_{\delta}=\{q\in {\cal S}: \langle e^{-\phi}, q \rangle\leq e^{-\phi_o}+\delta\}=\{q\in {\cal S}: \langle w, q \rangle\leq w_o+\delta\},\quad w_o:=e^{-\phi_o}.$$
Observe that Assumption~\ref{ass:A} implies that the same holds for $w$: for every $\e>0$ small enough there exist $\delta>0$ so that
$\{x: w(x)-w_o\leq \delta\}\subset
B(x_o,\e)$. Hence, as before, \eqref{balls} holds, and so $\bp\Rightarrow \delta_{q^*}$.
Thanks to Corollary \ref{cor1},  the convergence $\bp\Rightarrow \delta_{q^*}$ implies that all the assumptions of Theorem \ref{thmP} are fulfilled and so the convergence \eqref{fi-con2} holds and the limit process is such that
with the limit only the fittest genotype survives.
\\
%----
For the convergence $Q_n\Rightarrow \delta_{q^*}$, we apply Theorem \ref{thmQ} and Corollary \ref{cor1}: when $\lambda\in (0,1)$, then $w\equiv 1$, and so $Q_n\Rightarrow \delta_{q^*}$.
 When $\lambda=0$, then $w_n=w=e^{-\phi}$. Since $\phi$ is bounded, then $w$ is bounded away from 0. Recall that $r: \P\to \P$ is as follows $r_q(A)=\langle w,q \rangle^{-1}\int_A w dq$.
Thus $r_{q^*}=\delta_{x_o}=q^*$. Since $\p=\delta_{q^*}$, it holds that $Q=\p r^{-1}=\delta_{r_{q^*}}=\delta_{q^*}$.
\end{proof}
%--------------
% \paragraph{Remark:} When $\lambda=0$ and   $\pi({\cal S^*})>0$, then without assuming Assumption~\ref{ass:A}, it is not difficult to see that $\bp\Rightarrow \p$, where
% $$\p(E):={\pi(E\cap {\cal S}^*)\over \pi({\cal S}^*)},\quad \forall E\in \B(\P).$$
% In this case the limit process is such that
% for every $m\in \mathbb{N}$ and $A_1,\ldots,A_m\in \B(\X)$,
% $${\bf P}(X_1\in A_1,\ldots,X_m\in A_m)={1\over \pi({\cal S}^*)}\int_{\P\cap {\cal S}^*} \prod_{i=1}^m q(A_i)\pi(dq).$$
% [Does this hold also when $\lambda\in (0,1)$?]
%-----------------------------------
%--------------------
\section{Dirichlet process priors}\label{sec:dir}
In this section we consider the Dirichlet process priors as follows:
$\pi_n$ is the distribution of Dirichlet process $D(\a_n)$, where
$\a_n$ is a finite measure on $\B(\X)$ (the base measure). Recall
that a random measure $P$ on $(\X,\cal B(X))$ possesses a Dirichlet
process distribution $D(\a_n)$, when for every finite measurable
partition $A_1,\ldots,A_k$ of $\X$,
$$\big(P(A_1),\ldots,P(A_k)\big)\sim {\rm Dir}\big(k; \a_n(A_1),\ldots,\a_n(A_k)\big),$$
where ${\rm Dir}\big(k; \a_n(A_1),\ldots,\a_n(A_k)\big)$ stands for the
$k$-dimensional Dirichlet distribution with parameters
$\big(\a_n(A_1),\ldots,\a_n(A_k)\big)$. As it is common, we write
$m_n:=\a_n(\X)$ for the total mass of the base measure, and define a
probability measure $\ba_n:=\a_n/m_n$.  In what follows, we assume that
$\bar \a_n$ is fixed and independent of $n$, thus $\ba_n=\ba$, but $m_n$
depends on $n$, and is typically increasing in $n$. We also use the
parallel notation $DP(m_n,\ba)$. Since for any $A\in \B(\X)$,
$E[P(A)]=\ba(A)$ and ${\rm Var}[P(A)]={\ba(A)(1-\ba(A))\over
m_n+1}$, we see that the bigger $m_n$, the more is the process
concentrated on its mean $\ba$. Therefore increasing $m_n$ means
increasing the influence of the prior. Considering the Dirichlet process
as a random element on  a probability space $(\Omega,{\cal F},{\bf
P})$, i.e. $P: \Omega\to \P$, we define the measure $\pi_n$ as its
distribution $\pi_n(E):={\bf P}(P\in E)$, $E\in \B(\P)$. We shall refer to that prior as the { Dirichlet process prior} $DP(\a_n)$ or $DP(m_n,\ba)$.
%-------------
\subsection{Fixed mutation probability and fixed fitness ($\lambda=0$)}
\label{main-lambda0}
%--------------
Let us now consider the case where $m_n=c\cdot n$
% \pi_n$ is a Dirichlet process
% prior $DP(\alpha_n)=DP(c \cdot n,\ba)$,
where  $c>0$. Recall that $c$ determines the mutation probability ${c\over c+1}$.   We assume that
the support of $\ba$ is $\X$, and
then also the support of $\pi_n$ is $\P$. Recall that in this case the fitness function  \eqref{wn} is
$w(x)=e^{-\phi(x)}$,
% \quad \forall x\in \X,$$
where $\phi$ is nonnegative and continuous,
%\textcolor{blue}
({hence bounded when $\X$ is compact}). We also assume the existence of $x_o$ such that $\phi(x_o)=\inf_{x\in \X}\phi(x)=:\phi_o$. It means that $w(x_o)=\sup_{x\in \X}w(x)$.
For the time being, $x_o$ need not to be unique; sometimes the uniqueness is needed, and then we specify it later.
\\\\
By Theorem \ref{thmP} and Corollary \ref{cor1}, to prove convergence of $P_n$ we just need to prove convergence of
$\bar\pi_n$ (which was defined in \eqref{eq:barpin}).
The main idea is to prove that the sequence $\bar\pi_n$ satisfies a LDP with a certain rate function $I(q)$ (whence the need of compactness which is a usual assumptions in large deviations theory). The rate function can be written as
$I(q)=\sup_{q^\prime\in\P} F(q^\prime)-F(q)$. The expression of $F$ is different in the case $\lambda=0$ and $\lambda>0$, but in
both cases we prove that if $w$ has a unique maximizer, so has $F$. Then $I(q)$ is positive, but for one measure $q^*$ for which $I(q^*)=0$ and from the LDP we get convergence of $\bar\pi_n$ to $\delta_{q^*}$. The details are rather lengthy and have been put in
the Appendix.
\\\\
In what follows, for any measurable positive function $f$, we shall write $\langle w, f \rangle=\int_\X wf d\ba$ instead of using the cumbersome notation $\langle w, \mu_f\rangle$ where $\frac{d\mu_f}{d\bar \a}=f$.
%----------------
\begin{lemma}\label{lemma1b}  If the following inequality holds:
\begin{equation}\label{marta}
\int_\X \frac{w(x_o)}{w(x_o)-w(x)}\ba (dx)\geq {1+c\over
c},
\end{equation}
then there exists only one $\theta\geq {{w}(x_o)\over c+1}$ such
that
\begin{equation}\label{tihedus2} f(x):={c\over \big(1+c-{w(x)\over \theta}\big)}
\end{equation}
 is a probability density with
respect to $\ba$. This unique $\theta$ satisfies the (implicit) equation $\theta=\langle w, f \rangle$.
\end{lemma}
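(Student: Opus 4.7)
The plan is to study the map $G(\theta):=\int_\X f_\theta(x)\,\ba(dx)$ with $f_\theta(x):=c/(1+c-w(x)/\theta)$ on the half-line $\theta\in[w(x_o)/(c+1),+\infty)$ and locate the unique $\theta$ with $G(\theta)=1$. For $\theta>w(x_o)/(c+1)$ one has $1+c-w(x)/\theta\ge 1+c-w(x_o)/\theta>0$, so $f_\theta$ is bounded, nonnegative, continuous and $G(\theta)<+\infty$. At the endpoint $\theta=w(x_o)/(c+1)$ the denominator becomes $(c+1)(w(x_o)-w(x))/w(x_o)$, which is strictly positive off the set $\{w=w(x_o)\}$; hence $f_\theta$ is well defined $\ba$-a.e., and the only potential obstruction to integrability is a singularity along $\{w=w(x_o)\}$.

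\textbf{Monotonicity, boundary behavior, existence and uniqueness.} Pointwise in $x$, $\theta\mapsto f_\theta(x)$ is strictly decreasing, so $G$ is strictly decreasing on its domain. Continuity of $G$ on $(w(x_o)/(c+1),+\infty)$ follows from dominated convergence (each interior $\theta$ admits a closed neighborhood on which $f_\theta(x)$ is uniformly bounded in both variables). As $\theta\to+\infty$, $f_\theta(x)\to c/(1+c)$ uniformly, hence $G(\theta)\to c/(1+c)<1$. As $\theta\downarrow w(x_o)/(c+1)$, monotone convergence yields
\[
G(\theta)\ \uparrow\ \frac{c}{c+1}\int_\X \frac{w(x_o)}{w(x_o)-w(x)}\,\ba(dx)\ \ge\ 1,
\]
where the last inequality is precisely hypothesis \eqref{marta}. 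Combining strict monotonicity, continuity and these boundary limits yields a unique $\theta\in[w(x_o)/(c+1),+\infty)$ with $G(\theta)=1$; the endpoint value is attained exactly when \eqref{marta} holds with equality and the corresponding integral is finite.

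\textbf{Implicit equation.} Once the unique $\theta$ is fixed and $f:=f_\theta$, I would multiply the identity $f(x)\bigl(1+c-w(x)/\theta\bigr)=c$ by $\ba(dx)$ and integrate, obtaining $(1+c)\int f\,d\ba-\langle w,f\rangle/\theta=c$. Using $\int f\,d\ba=1$, this collapses to $\theta=\langle w,f\rangle$, as asserted.

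\textbf{Main obstacle.} The delicate step is the behavior of $G$ at the left endpoint when \eqref{marta} holds with equality: there the candidate density is singular along $\{w=w(x_o)\}$, and one must invoke exactly this hypothesis to conclude that $f$ still integrates to $1$. Away from the endpoint, the argument reduces to routine monotonicity together with dominated/monotone convergence, and mirrors the standard normalization step in an exponentially tilted family.
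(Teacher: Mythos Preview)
Your proof is correct and follows essentially the same route as the paper: both arguments establish that $\theta\mapsto\int_\X f_\theta\,d\ba$ is continuous and strictly decreasing on $[\theta_o,\infty)$ with $\theta_o=w(x_o)/(c+1)$, observe that the value at $\theta_o$ equals $\frac{c}{c+1}\int_\X\frac{w(x_o)}{w(x_o)-w(x)}\,\ba(dx)\ge 1$ by \eqref{marta}, and derive the implicit relation $\theta=\langle w,f\rangle$ by integrating the defining identity $f_\theta(1+c-w/\theta)=c$. Your write-up is more explicit about the analytic details (dominated and monotone convergence, the limit $c/(1+c)$ at infinity, the endpoint discussion), but the underlying argument is the same.
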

\begin{proof} Clearly $f(x)\geq 0$ if and only if $\theta\geq
%{\bar{w}\over c+1}
\frac{w(x_o)}{c+1}
=:\theta_o$. Denoting $f$ by $f_{\theta}$, we see
that $\theta \mapsto \int_\X f_{\theta}d\ba$ is continuous and strictly
decreasing. Hence, there exists at most one $\theta$ such that
$f_{\theta}$ is a probability density. Such  $\theta$ exists, if $\int_\X
f_{\theta_o}d\ba\geq 1$, which is equivalent to \eqref{marta}. To
see that $\theta=\langle w, f \rangle$, observe
that, when $f_\theta$ is a probability density,
$$c=\int_\X \big(1+c-{w\over \theta}\big)f_\theta d\ba=1+c-{\int wf_\theta d\ba\over \theta},$$
which is possible only if $\theta=\langle w, f_\theta \rangle$.
\end{proof}
\\\\
%-------------
It turns out that the inequality \eqref{marta} is crucial. It does
not hold, when $w$ has a very sharp peak around its maximum value or
$\ba$ puts very little mass around the maximum of $w$. Hence
\eqref{marta} somehow characterizes $w$ as well as $\ba$. Observe that when \eqref{marta} fails, then $\ba(w^{-1}(w(x_o))=0$;
%(\bar w))=0$;
in particular, it cannot have an atom at $x$ whenever $w(x)=w(x_o)$.
%\bar w$.
%-------------------------------------
%--------------
\\\\
In order to state and prove our main results we define two subsets of $\P$, namely
$$\P_1:=\{q\in \P: \ba \ll q\}, \quad {\cal P}_o :=\{q\in \P_1: q\ll \ba\}.$$
Note that, $q\in \P_o$ if and only if there exists a measurable function $h$ such that
$q(h>0)=1$ and
$d\bar \a/dq=h$.
%$\bar \a(E)=\int_E g(x)q(dx)$ for every measurable set $E$.
In this case $q(E)=0$ if and only if $\bar \a(E)=0$; moreover
$dq/d\bar\a=1/h$.
\\\\
We will see that the asymptotic distribution has a different shape according to whether or not equation~\eqref{marta} holds. On the one hand, when \eqref{marta} holds we have a probability density given by equation~\eqref{tihedus2} and we denote by $q^*$ the corresponding measure; clearly
%\textcolor{blue}
{$q^* \in \P_0$}. On the other hand,
when \eqref{marta} fails, suppose that
$x_o$ is one of the absolute maxima of $w$
(right now, we do not assume $x_o$ to be unique), then $\bar \alpha(x_o)=0$;
since \eqref{marta} fails, it holds
\begin{equation}\label{tihedus3}
\beta:=\int_\X f d\ba<1,\quad \text{where}\quad f:=\frac{cw(x_o)}{(1+c)(w(x_o)-w(x))}
\end{equation}
and, in this case, we define the measure
\begin{equation}\label{q-def}
q^*:=\beta q^a+(1-\beta)\delta_{x_o},
\end{equation}
where $q^a$ has density  $\beta^{-1}f$ (with respect to $\ba$); we
%\textcolor{blue}
{shall argue in Appendix (after (\ref{eq:thetao})) that in this case $q^*\in \P_1 \setminus \P_o$}.
\\\\
%----------------
Our goal is to prove that if $x_o$ is the unique maximum for $w$ then $\bp\Rightarrow \delta_{q^*}$.
Recall that in our case
\begin{equation}\label{measure}
\bp(E)={1\over Z_n}\int_E \langle  w,q \rangle^n \pi_n (dq)={1\over
Z_n}\int_E \exp [n \ln \big(\langle  w,q \rangle\big)]  \pi_n(dq)= {1\over Z_n}\int_E\exp[m_n G(q)]\pi_n(dq),\end{equation}
where  $G(q):={1\over c}\ln(\langle w,q
\rangle)$ and $m_n:=c\cdot n$. When $w$ is bounded and continuous, then $G$
is a continuous function on $\P$. We use the following theorem:
%------------------------------------
\begin{theorem}\label{thmFeng} \text{\rm (Corollary 9.3 \cite{Feng})} Let $\X$ be compact, $G:
\P\to \mathbb{R}$  be a continuous function, and let
$\pi_n$ be the $DP(m_n,\ba)$-prior. Define the sequence of measures
$$\bp(dq)={1\over Z_n}\exp[m_n G(q)]\pi_n(dq).$$
The sequence satisfies a Large Deviation Principle (in short LDP) on the space $\P$ as $n$ tends to infinity,
with speed $m_n^{-1}$ and rate function
$$I(q)=\sup_{q'\in \P}[G(q')-D(\ba\|q')]-(G(q)-D(\ba\|q)).$$
% where  $H(q)=D(\ba\|q)$.
\end{theorem}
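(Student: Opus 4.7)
The plan is to combine two classical ingredients from large deviation theory: a base LDP for the Dirichlet process prior $DP(m_n,\ba)$ and the tilting (Varadhan) principle, which converts one LDP into another via an exponential reweighting by a continuous functional.

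First, I would invoke the Lynch--Sethuraman LDP (extended by Dawson--Feng) which asserts that on the Polish space $\P$, which is compact since $\X$ is compact, equipped with the weak topology, the family $\{\pi_n\}_n$ with $\pi_n = DP(m_n,\ba)$ satisfies an LDP with speed $m_n^{-1}$ and good rate function
$$J(q):=D(\ba\|q).$$
Lower semicontinuity of $q\mapsto D(\ba\|q)$ in the weak topology, combined with compactness of $\P$, guarantees that $J$ is a good rate function. This is the genuine content that does the heavy lifting, and it is exactly what Feng's monograph establishes.

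Second, since $G$ is continuous on the compact space $\P$, it is automatically bounded, so Varadhan's integral lemma (see e.g.~\cite{LDP}) applies without any additional moment condition and yields
$$\lim_n {1\over m_n}\log Z_n \;=\; \sup_{q'\in\P}\bigl[G(q')-D(\ba\|q')\bigr].$$
The standard tilted--LDP theorem then produces the LDP for the reweighted measure $\bp(dq) = \exp[m_n G(q)]\pi_n(dq)/Z_n$, with the same speed $m_n^{-1}$ and rate function
$$I(q)\;=\;\bigl(J(q)-G(q)\bigr)-\inf_{q'\in\P}\bigl(J(q')-G(q')\bigr).$$
Rewriting $-\inf_{q'}(J(q')-G(q')) = \sup_{q'}(G(q')-J(q'))$ recovers exactly the expression in the statement of the theorem.

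The only point of possible difficulty is the moment/tail condition for Varadhan's lemma, namely that $\limsup_n m_n^{-1}\log\int_\P \exp[\gamma m_n G(q)]\,\pi_n(dq) < \infty$ for some $\gamma>1$; as noted, compactness of $\P$ makes this automatic. Thus the only truly nontrivial step is the base LDP for the Dirichlet process, and the rest is a mechanical application of the tilting principle. Since the statement is cited verbatim from Feng's Corollary~9.3, my role in the present paper is merely to verify that the hypotheses of that result hold (compactness of $\X$, continuity of $G$) and to specialize the general rate function in the form displayed above.
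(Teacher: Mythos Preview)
Your proposal is correct. Note, however, that the paper does not actually prove Theorem~\ref{thmFeng}: it is quoted verbatim as Corollary~9.3 from Feng's monograph and used as a black box. The paper does give a full proof of the generalization Theorem~\ref{thmFeng2} (where $G_n\to G$ uniformly), and that proof follows precisely the route you outline---the base LDP for the Dirichlet process prior (stated as Theorem~\ref{thm:FengLDP}), Varadhan's lemma (Theorem~\ref{thm:var}) to identify $\lim m_n^{-1}\ln Z_n$, and then a local small-ball criterion (Theorem~\ref{thm-abi}) in place of the packaged tilted-LDP theorem you invoke. Your use of the standard tilted-LDP result is a slightly more streamlined version of the same argument, appropriate here because $G$ does not depend on $n$; the paper's hands-on verification via Theorem~\ref{thm-abi} is needed only to accommodate the $n$-dependence in $G_n$.
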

%--------------------------------------
Note that the following theorem holds not only with the weights $w$ as in \eqref{wn} but also for general continuous (hence bounded),
non negative weights.
%---------------
\begin{theorem}\label{thm1a}
Let $\X$ be compact
%and  $w$ continuous
and let  $x_o$ be the unique maximum for $w$
If \eqref{marta} holds, define  $q^*$ as the measure $fd\ba$, where $f$ is as in \eqref{tihedus2};
otherwise define $q^*$ as in \eqref{q-def}. Let $r_{q^*}$ be the  probability measure defined on $ \B(\X)$,
such that $r_{q^*}(A)\propto \int_A w dq^*$, for all $A\in \B(\X)$.
 Then
 \begin{enumerate}
  \item $\bp\Rightarrow \delta_{q^*}$;
  \item the limit process of $P_n$ (in the sense of \eqref{fi-con2}) is an i.i.d. process where $X_i\sim r_{q^*}$;
  \item $Q_n\Rightarrow \delta_{r_{q^*}}$.
% where $r_{q^*}(A)\propto \int_A w dq^*$, for all $A\in \B(\X)$.
 \end{enumerate}
%
%  , where under \eqref{marta}
%  $q^*$ has density
% \eqref{tihedus2}, otherwise it is defined as in \eqref{q-def}. Moreover, $Q_n\Rightarrow \delta_{r_{q^*}},$ where $r_{q^*}(A)\propto \int_A w dq^*$, for all $A\in \B(\X)$.
\end{theorem}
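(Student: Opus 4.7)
The plan is to derive all three claims from the large deviations principle of Theorem \ref{thmFeng} applied with $G(q)=c^{-1}\ln\langle w,q\rangle$. Writing
\[
F(q):=G(q)-D(\ba\|q),\qquad I(q)=\sup_{q'\in\P}F(q')-F(q),
\]
claim 1 will follow once $q^*$ is identified as the \emph{unique} maximizer of $F$ on $\P$, since then $I^{-1}(0)=\{q^*\}$; the LDP upper bound on the closed set $B(q^*,\varepsilon)^c$ (whose infimum of $I$ is strictly positive by lower semicontinuity of $I$ on compact $\P$) yields $\bar\pi_n(B(q^*,\varepsilon)^c)\to 0$, i.e.\ $\bar\pi_n\Rightarrow\delta_{q^*}$. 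Claims 2 and 3 are then immediate: Corollary \ref{cor1} at $\lambda=0$ gives $r_q$ proportional to $w\,dq$, so the limit process is i.i.d.\ with marginal $r_{q^*}$; and Theorem \ref{thmQ} gives $Q_n\Rightarrow\bar\pi r^{-1}=\delta_{q^*}r^{-1}=\delta_{r_{q^*}}$.

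The substance is therefore the variational problem $\sup_{q\in\P}F(q)$. Since $F=-\infty$ off $\P_1:=\{q:\ba\ll q\}$, I restrict to $\P_1$, where $F$ is concave (logarithm of a linear functional plus $-D(\ba\|\cdot)$, which is convex in its second argument) and upper semicontinuous on compact $\P$, so a maximizer exists. For $q\in\P_1$ I Lebesgue-decompose with respect to $\ba$: $D(\ba\|q)$ depends only on the absolutely continuous part $q_{ac}$, while the singular part $q_s$ enters $F$ only through $\langle w,q_s\rangle$. Fixing the singular mass $\gamma=\|q_s\|$, maximizing $\langle w,q_s\rangle$ forces $q_s=\gamma\delta_{x_o}$ by uniqueness of $x_o$ as maximizer of $w$ (consistent only if $\ba(\{x_o\})=0$ whenever $\gamma>0$). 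The problem thus reduces to maximizing $F(g\,d\ba+\gamma\delta_{x_o})$ over $g\ge 0$, $\gamma\ge 0$, $\int g\,d\ba+\gamma=1$.

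A Lagrange computation (multiplier $\mu$ for the total mass constraint) yields the stationarity conditions
\[
\frac{1}{g(x)}=\mu-\frac{w(x)}{c\theta},\qquad \mu\ge\frac{w(x_o)}{c\theta}\ \text{with equality iff }\gamma>0,
\]
with $\theta:=\langle w,q\rangle$. Elementary algebra using the mass constraint splits this into two regimes. When \eqref{marta} holds, the unique admissible solution has $\gamma=0$, $\mu=(1+c)/c$, and $g$ equal to the density $f$ of \eqref{tihedus2} with $\theta$ supplied by Lemma \ref{lemma1b}; when \eqref{marta} fails, the same equations force $\mu=w(x_o)/(c\theta)$, hence $\theta=w(x_o)/(1+c)$ and $g=f$ as in \eqref{tihedus3}, with $\gamma=1-\beta>0$, matching \eqref{q-def}. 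In either case the critical point coincides with $q^*$, and the strict concavity inherited from $-D(\ba\|\cdot)$ promotes it to the unique global maximizer of $F$, so $I^{-1}(0)=\{q^*\}$.

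I expect the main obstacle to be the second regime, in which $q^*\in\P_1\setminus\P_o$ carries an atom at $x_o$ that a naive variation over densities cannot reach: the density equation $1/g=\mu-w/(c\theta)$ fails to give an admissible probability density exactly when \eqref{marta} fails, so the first-order analysis must be supplemented by a limiting argument, e.g.\ approximating $q^*$ by measures $q_\varepsilon\in\P_o$ that redistribute the mass $1-\beta$ uniformly over $B(x_o,\varepsilon)$, checking $F(q_\varepsilon)\to F(q^*)$, and simultaneously ruling out atom-free maximizers by the non-integrability of $1/(\mu-w/(c\theta))$ at the constrained values of $\mu,\theta$. This is the delicate technical work one would expect to defer to the Appendix.
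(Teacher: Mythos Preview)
Your overall architecture matches the paper exactly: apply Theorem~\ref{thmFeng}, identify $q^*$ as the unique maximizer of $F(q)=G(q)-D(\ba\|q)$ on $\P_1$, use upper semicontinuity of $F$ on the (compact) complement of any ball about $q^*$ to force $\inf_{C}I>0$ and hence $\bp\Rightarrow\delta_{q^*}$, then read off claims 2 and 3 from Corollary~\ref{cor1} and Theorem~\ref{thmQ}.

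Where you diverge from the paper is the variational step, and the difference is genuine. The paper never invokes concavity or Lagrange multipliers; it proves $F(q^*)>F(q)$ for every competitor by hand, splitting into four cases (Lemmas~\ref{lemma1}--\ref{lemma4}: \eqref{marta} holds or fails, crossed with $q\in\P_o$ or $q\in\P_1\setminus\P_o$). In each case it bounds $F(q^*)-F(q)$ via Jensen's inequality applied to $\int(g/f)\,d\ba$ and then reduces the remaining term to an explicit elementary inequality, Proposition~\ref{append}: $(1+x)(1+1/x)^x\alpha^x(1-\alpha)\le 1$. Your concavity/KKT route is more structural and shorter, but one statement needs correction: $q\mapsto -D(\ba\|q)$ is \emph{not} strictly concave on $\P_1$ (two measures with the same absolutely continuous part and different $\ba$-singular parts give the same $D$, so $F$ is affine along such segments). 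What is true, and suffices, is that after your reduction to $q=g\,d\ba+\gamma\delta_{x_o}$ with $\gamma=1-\int g\,d\ba$, the resulting functional $g\mapsto \ln\big(w(x_o)-\langle w(x_o)-w,\,g\rangle\big)+c\int\ln g\,d\ba$ is strictly concave in $g$, and this is what gives uniqueness. Your reduction of the singular part to $\gamma\delta_{x_o}$ via $\langle w,q_s\rangle\le\|q_s\|\,w(x_o)$ and the uniqueness of $x_o$ is precisely the geometric content of the paper's Lemmas~\ref{lemma3} and~\ref{lemma4}, phrased more conceptually. Finally, the limiting argument you anticipate in your last paragraph is unnecessary: since you already carry $\gamma$ as a variable with its own KKT condition ($\mu\ge w(x_o)/(c\theta)$, with equality iff $\gamma>0$), the atom appears directly from the constrained optimum when \eqref{marta} fails, with no approximation by $\P_o$-measures required.
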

%--------------
See Appendix for the proof.

%--------------------------------

%--------------
\paragraph{Example.} Take $\X=[0,1]$, $x_o=0.3$, $\ba$ -- Lebesgue measure and $\phi(x)=|x-x_o|^p$. Then \eqref{marta}) holds, if
\begin{equation}\label{marta1a}
\int_0^1 \Big({e^{|x-0.3|^p}\over
e^{|x-0.3|^p}-1}\Big) dx\geq {c+1\over c}.
\end{equation}
When  $c$ is sufficiently big and $p$ is small enough and, i.e. $p<p^*(c)$
(for instance $p(1)\approx 0.2$), then \eqref{marta1a} fails.\\\\
When $p>p^*$ (so that \eqref{marta1a} holds), then there exists $\theta \in [{1\over 1+c},1]$
$w(x_o)=1$) so that
$$f(x)=c\Big(1+c-{e^{-|x-0.3|^p}\over
\theta}\Big)^{-1}$$
would integrate to 1 over $[0,1]$. Then $r_{q^*}$ has density
$${w(x)f(x)\over \theta}={c \exp[-|x-0.3|^p]\over \theta (1+c)-\exp[-|x-0.3|^p]}={c\over (1+c)\theta e^{|x-0.3|^p}-1}=:f^*(x).$$
Thus, when $n$ is big enough and $X_1,\ldots,X_n\sim P_n$, then $X_1,\ldots,X_n$ are approximatively i.i.d. with density $f^*$.\\\\
%--------------
When $p<p^*$ (\eqref{marta1a} fails), then $\theta=\theta_o=1/(c+1)$ and so the function $f$ in \eqref{tihedus2} is
$$f(x)={c\over 1+c}\Big(1-\exp[-|x-0.3|^p]\Big)^{-1},\quad \beta=\int_0^1 f(x)dx<1. $$
Thus $${f(x)w(x)\over \theta}={c\over  e^{|x-0.3|^p}-1}=:f^a(x).$$
Hence the measure $r_{q^*}$ is such that
$$r_{q^*}(A)=\int_A f^a(x)dx+(1-\beta)(1+c)\delta_{0.3}(A),$$
i.e. it has absolutely continuous part with density $f^a$ (integrating up to $\beta(1+c)-c$) and atom $0.3$ with mass $(1-\beta)(1+c)$.
Thus, when $n$ is big enough and $X_1,\ldots,X_n\sim P_n$, then $X_1,\ldots,X_n$ are approximatively i.i.d. with measure $r_{q^*}$.

%%%%% The following goes into the appendix

%---------------------------------------
%--------------
\subsection{The case $\lambda\in (0,1)$}
\label{main-lambda01}
%-------------
The authors in \cite{article1} consider also the case when $\pi_n$ is the $DP(n^{1-\lambda},\ba)$-prior and the fitness function is as in \eqref{wn}, i.e. $w_n=\exp[-{\phi(x)\over  n^{\lambda}}]$,
$\lambda \in (0,1)$. Thus the mutation probability is $(1+n^{\lambda})^{-1}\to 0$. When $\X$ is finite, then in this case the limit process is again an i.i.d process with some measure $q^*$ that differs from the measures $r_{q^*}$ in the case $\lambda=0$. Somehow
surprisingly, the limit measure is independent of $\lambda$.
We still assume the existence of $x_o$ such that $\phi(x_o)=\inf_{x\in \X}\phi(x)=:\phi_o$, so that $w(x_o)=\sup_{x\in \X}w(x)$.
%--------------------------
\\\\
We now take $\pi_n$ as $DP(c\cdot n^{1-\lambda},\ba)$-prior. Then the mutation probability is $c/(n^{\lambda}+c)$ and we
 need to study the measure $\bp$, where
\begin{align}\label{moot}
\bp(E)&={1\over Z_n}\int_E \langle w_n,q \rangle^n \pi_n(dq)={1\over Z_n}\int_E \exp[n \ln \langle w_n,q \rangle] \pi_n(dq)\\
&=
{1\over Z_n}\int_E \exp[n^{1-\lambda} ( n^{\lambda} \ln \langle w_n,q \rangle )] \pi_n(dq)={1\over Z_n}\int_E \exp[m_n \cdot  G_n(q)] \pi_n(dq).\end{align}
 Here $$m_n:=c\cdot n^{1-\lambda},\quad G_n(q):={1\over c}\cdot n^{\lambda} \ln \langle w_n,q \rangle={1\over c}\cdot\ln \big( \langle w_n,q \rangle^{n^{\lambda}}\big).$$
We see that $G_n$ depends on $n$, and so Theorem \ref{thmFeng} does not immediately apply. On the other hand, when $\phi$ is bounded,
% ``HarryPotter'' - here we need that $\phi$ is bounded
then  by Proposition \ref{Pee}
$$\langle w_n,q \rangle^{n^{\lambda}}\to \exp[-\langle \phi,q\rangle ]$$
and the convergence is uniform in $q\in\P$. Since $q\mapsto \exp[-\langle \phi,q\rangle ]$ is bounded, it follows that $G_n(q)\to G(q):=-{1\over c}\langle \phi,q\rangle $ uniformly.
\\\\
The following result is the analogous of Lemma~\ref{lemma1b}; the proof is similar and is left to the reader.
%---------------
\begin{lemma}\label{lemma1bb} Consider the bounded function $\phi:\X\to [0,\infty)$ and define $\phi_o:=\inf_x \phi(x)$. %=\phi(x_o)$.
If the following inequality holds:
\begin{equation}\label{marta2}
\int_\X {1\over \phi(x)-\phi_o}\ba (dx)\geq {1 \over c},
\end{equation}
then there exists one $\theta \in [\phi_o, \phi_o+c]$ so that
\begin{equation}\label{tihedus4}
f(x)={c\over \phi(x)+c-\theta}
\end{equation}
is a probability density with respect to $\ba$, and then $\theta=\langle \phi,f \rangle=\int_\X \phi(x)f(x)\ba(dx).$
\end{lemma}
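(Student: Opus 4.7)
The plan is to mimic the proof of Lemma \ref{lemma1b} almost verbatim, replacing the role of $w/\theta$ by $\theta/(\phi+c)$-type expressions and using that $\phi$ is bounded below by $\phi_o$. Set $f_\theta(x):=c/(\phi(x)+c-\theta)$ and study the map $\theta \mapsto \int_\X f_\theta\, d\ba$ on the interval $[\phi_o,\phi_o+c]$. Since $\phi(x)\ge \phi_o$, for every $\theta$ in this interval we have $\phi(x)+c-\theta \ge \phi_o+c-\theta\ge 0$, so $f_\theta\ge 0$ (with a possible singularity at $x_o$ only when $\theta=\phi_o+c$).

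Next I would verify that $\theta \mapsto \int f_\theta\, d\ba$ is continuous and strictly increasing. Monotonicity is immediate since increasing $\theta$ decreases the denominator pointwise; continuity on $[\phi_o,\phi_o+c)$ follows by dominated convergence with the dominating function $c/(\phi_o+c-\theta_1)$ for any $\theta_1$ slightly larger than the point considered, and continuity from the left at $\phi_o+c$ follows by monotone convergence. Then I would compute the two endpoint values: at $\theta=\phi_o$, the integrand is bounded by $c/c=1$, so $\int f_{\phi_o}\, d\ba \le 1$; at $\theta=\phi_o+c$, the identity $f_{\phi_o+c}(x)=c/(\phi(x)-\phi_o)$ together with assumption \eqref{marta2} gives $\int f_{\phi_o+c}\, d\ba\ge 1$. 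By the intermediate value theorem there exists (a unique) $\theta\in [\phi_o,\phi_o+c]$ so that $\int f_\theta\, d\ba=1$, i.e.\ $f:=f_\theta$ is a probability density with respect to $\ba$.

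To obtain the implicit identity $\theta=\langle \phi,f\rangle$, rewrite the defining relation as $f(x)\bigl(\phi(x)+c-\theta\bigr)=c$ and integrate against $\ba$:
\[
c=\int_\X f(x)\bigl(\phi(x)+c-\theta\bigr)\,\ba(dx)=\langle \phi,f\rangle + (c-\theta)\int_\X f\, d\ba = \langle \phi,f\rangle +c-\theta,
\]
so $\theta=\langle \phi,f\rangle$. The only mildly delicate step is the behaviour at the right endpoint $\theta=\phi_o+c$, where $f_\theta$ may have a non-integrable-looking singularity at $x_o$; this is handled by monotone convergence together with the hypothesis \eqref{marta2}, which is exactly the statement that the limiting integral is at least one. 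Everything else is a routine adaptation of the argument used for Lemma \ref{lemma1b}.
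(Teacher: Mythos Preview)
Your proposal is correct and is precisely the adaptation the paper intends: the paper gives no explicit proof of Lemma~\ref{lemma1bb}, writing only that ``the proof is similar [to Lemma~\ref{lemma1b}] and is left to the reader,'' and your argument follows that template faithfully (with the expected sign flip making the map $\theta\mapsto\int f_\theta\,d\ba$ increasing rather than decreasing). If anything, you supply more detail than the paper's own proof of Lemma~\ref{lemma1b} by justifying continuity via dominated/monotone convergence at the endpoints.
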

Again, when \eqref{marta2} fails, then $\ba(\{x\colon \phi(x)=\phi_o\})=0$. As in the previous case, the limit distribution takes two completely different shapes according to whether or not equation~\eqref{marta2} holds. If it holds we have a probability density given by \eqref{tihedus4} and we denote by $q^*$ the corresponding measure; clearly
%\textcolor{blue}
{$q^* \in \P_0$}. If not,
consider $x_o$ such that $\phi(x_o)=\phi_o$,
\begin{equation}\label{fo}f(x)={c\over \phi(x)-\phi_o},\quad \beta:=\int_\X f d\ba<1,\end{equation}
and define \begin{equation}\label{q-def2}
q^*=\beta q^a+(1-\beta)\delta_{x_o},\end{equation}
where $q^a$ has density $\beta^{-1}f$ with respect to $\bar \a$ and $\phi(x_o)=\phi_o$.
%\textcolor{blue}
{Since $f>0$ everywhere (due to the boundedness of $\phi$), it follows $q^* \in \P_1 \setminus \P_o$}.
Observe that $q^*$ is independent of $\lambda$. Hence, under \eqref{marta2},
%the random variables have
it has density  \eqref{tihedus4} with respect to $\ba$. Otherwise the measure $q^*$ has atom $x_o$ that has mass $(1-\beta)$, where $\beta$ is defined as in \eqref{fo}.
\\\\
The following theorem generalizes Theorem \ref{thmFeng} (see Section~\ref{main-lambda01appendix} for the details of the proof).  % Henceforth we denote the relative entropy $D(\ba\|q)$ by $H(q)$. Note that $H$ is a lower semicontinuous function.
%---------
\begin{theorem}\label{thmFeng2} Let $\X$ be compact, $G,G_n:
\P\to \mathbb{R}$  be continuous functions that converge uniformly: $\sup_q |G_n(q)-G(q)|\to 0$, and let
$\pi_n$ be $DP(m_n,\ba)$-prior. Define the sequence of measures
$$\bp(dq)={1\over Z_n}\exp[m_n G_n(q)]\pi_n(dq).$$
The sequence satisfies a LDP on the space $\P$ as $n$ tends to infty,
with speed $m_n^{-1}$ and rate function
$$I(q)=\sup_{q'\in \P}[G(q')-D(\ba\|q')
%H(q')
]-(G(q)-D(\ba\|q)
%H(q)
).$$
\end{theorem}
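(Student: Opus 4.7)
The plan is to reduce Theorem \ref{thmFeng2} to Theorem \ref{thmFeng} by a soft comparison argument. Let $\varepsilon_n:=\sup_{q\in\P}|G_n(q)-G(q)|$, which tends to zero by hypothesis, and introduce the auxiliary tilted probability measures
\[
\tilde\pi_n(dq):=\frac{1}{\tilde Z_n}\exp[m_n G(q)]\pi_n(dq),\qquad \tilde Z_n:=\int_\P\exp[m_n G(q)]\pi_n(dq).
\]
Since $G$ is continuous and $\X$ is compact, Theorem \ref{thmFeng} applies directly to $\tilde\pi_n$, yielding an LDP at speed $m_n^{-1}$ with the rate function $I$ appearing in the statement.

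The crux is a uniform two-sided comparison of $\bp$ with $\tilde\pi_n$. Writing $N_n(B):=\int_B\exp[m_n G_n(q)]\pi_n(dq)$ and $M_n(B):=\int_B\exp[m_n G(q)]\pi_n(dq)$, the estimate $|G_n-G|\le\varepsilon_n$ immediately gives $\exp[-m_n\varepsilon_n]M_n(B)\le N_n(B)\le\exp[m_n\varepsilon_n]M_n(B)$ for every $B\in\B(\P)$. Forming the ratios $\bp(B)=N_n(B)/N_n(\P)$ and $\tilde\pi_n(B)=M_n(B)/M_n(\P)$ and using the opposite-sign bound in the denominator,
\[
\exp[-2m_n\varepsilon_n]\tilde\pi_n(B)\le\bp(B)\le\exp[2m_n\varepsilon_n]\tilde\pi_n(B).
\]
Applying $m_n^{-1}\ln$ and using $\varepsilon_n\to 0$ yields $\tfrac{1}{m_n}|\ln\bp(B)-\ln\tilde\pi_n(B)|\le 2\varepsilon_n\to 0$, uniformly in $B\in\B(\P)$.

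From this equivalence the LDP transfers. For every closed $F\subset\P$,
\[
\limsup_n\tfrac{1}{m_n}\ln\bp(F)\le \limsup_n\tfrac{1}{m_n}\ln\tilde\pi_n(F)\le -\inf_{q\in F}I(q),
\]
and the analogous inequality with $\liminf$ and $-\inf_{q\in O}I(q)$ holds for every open $O\subset\P$, proving the LDP for $\bp$ with rate function $I$. I do not anticipate any serious obstacle here: the only delicate point is that $m_n\varepsilon_n$ need not remain bounded, but after the $m_n^{-1}\ln$ scaling only $2\varepsilon_n\to 0$ survives. This is precisely where \emph{uniform} convergence of $G_n\to G$ on all of $\P$ is used essentially, since pointwise convergence alone would fail to bound the density ratio $N_n/M_n$ uniformly in $q$ and the comparison would break down.
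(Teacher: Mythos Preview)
Your argument is correct and in fact cleaner than the paper's. You reduce to Theorem \ref{thmFeng} as a black box: define the auxiliary tilts $\tilde\pi_n$ with the fixed function $G$, obtain their LDP directly from Theorem \ref{thmFeng}, and then transfer the LDP to $\bp$ via the two-sided density bound $e^{-2m_n\varepsilon_n}\tilde\pi_n(B)\le\bp(B)\le e^{2m_n\varepsilon_n}\tilde\pi_n(B)$. After the $m_n^{-1}\ln$ scaling only the harmless $2\varepsilon_n\to 0$ remains, and the upper and lower LDP bounds pass through. This is essentially an ``exponential equivalence'' argument, and it works because $\bp$ and $\tilde\pi_n$ are mutually absolutely continuous with log-density bounded by $2m_n\varepsilon_n$.

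The paper takes a different route: rather than invoking Theorem \ref{thmFeng}, it goes back to its ingredients. It uses the LDP for the untilted priors $\pi_n$ (Theorem \ref{thm:FengLDP}), the generalized Varadhan lemma (Theorem \ref{thm:var}) to control the normalizing constants $Z_n$, and then verifies the local ball criterion of Theorem \ref{thm-abi} by bounding $\int_{B(q,\delta)}e^{m_nG_n}d\pi_n$ using both the uniform closeness $|G_n-G|\le\varepsilon_1$ and the continuity of $G$ on the small ball. Your approach is shorter and more transparent because it isolates the perturbation $G_n\to G$ as a pure comparison step, whereas the paper interleaves the perturbation with a full re-derivation of the tilted LDP. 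The paper's approach, on the other hand, is slightly more self-contained in that it does not rely on Theorem \ref{thmFeng} as a finished statement but only on the more basic Theorems \ref{thm:FengLDP}--\ref{thm-abi}.
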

%------------
The following is the analog of Theorem~\ref{thm1a} in the case $\lambda\in (0,1)$.
%----------------
\begin{theorem}\label{thm1b} Let $\X$ be compact and  $\phi$ continuous. Assume that  $x_o$ is
%unique minimizer of $\phi$.
the unique maximum of $w$.
If \eqref{marta2} holds, define  $q^*$ as the measure $fd\ba$, where $f$ is as in \eqref{tihedus4};
otherwise define $q^*$ as in \eqref{q-def2}.
 Then
 \begin{enumerate}
  \item $\bp\Rightarrow \delta_{q^*}$;
  \item the limit process of $P_n$ (in the sense of \eqref{fi-con2}) is an i.i.d. process where $X_i\sim {q^*}$;
  \item $Q_n\Rightarrow \delta_{{q^*}}$.
% where $r_{q^*}(A)\propto \int_A w dq^*$, for all $A\in \B(\X)$.
 \end{enumerate}
% Then $\bp\Rightarrow \delta_{q^*}$, where under \eqref{marta2}
% $q^*$ has density $f$ as in
%\eqref{tihedus4}, otherwise it is defined as in \eqref{q-def2}. Moreover, $Q_n\Rightarrow \delta_{q^*}$.
\end{theorem}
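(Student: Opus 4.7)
The plan is to mirror the proof of Theorem~\ref{thm1a}, but now invoke Theorem~\ref{thmFeng2} because the integrand $G_n$ in \eqref{moot} depends on $n$. Since $\X$ is compact and $\phi$ continuous, $\phi$ is bounded; Proposition~\ref{Pee} then yields $\langle w_n, q\rangle^{n^\lambda}\to \exp[-\langle \phi, q\rangle]$ uniformly in $q\in \P$. Taking logarithms and dividing by $c$ gives $\sup_q|G_n(q)-G(q)|\to 0$ with $G(q)=-\frac{1}{c}\langle \phi, q\rangle$. Continuity of $G_n$ and $G$ in the weak topology on $\P$ is clear from the continuity of $q\mapsto \langle f, q\rangle$ for bounded continuous $f$, so Theorem~\ref{thmFeng2} delivers a LDP for $\bp$ on $\P$ at speed $m_n^{-1}=c^{-1}n^{\lambda-1}$ with rate function $I(q)=\sup_{q'\in\P}F(q')-F(q)$, where
\[
F(q):=G(q)-D(\ba\|q)=-\tfrac{1}{c}\langle \phi, q\rangle - D(\ba\|q).
\]

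The core of the proof is to identify the unique maximizer of $F$ as the measure $q^*$ defined before the theorem. Since $D(\ba\|q)=+\infty$ whenever $\ba\not\ll q$, we may restrict the supremum to $q\in \P_1$. For such $q$, write $q=q^{ac}+q^s$ with $q^{ac}$ having density $h$ with respect to $\ba$ and $q^s\perp\ba$; the condition $\ba\ll q$ forces $h>0$ $\ba$-a.s., and one computes $D(\ba\|q)=-\int \ln h\, d\ba$. The uniqueness of the minimizer $x_o$ of $\phi$ (equivalent to the unique-maximizer hypothesis on $w$) implies that any singular mass placed off $x_o$ can be strictly improved by relocating it to $x_o$, so at the optimum $q^s=(1-\beta)\delta_{x_o}$ with $\beta=\int h\, d\ba\leq 1$ and
\[
F(q)=-\tfrac{1}{c}\int(\phi-\phi_o)h\, d\ba-\tfrac{\phi_o}{c}+\int\ln h\, d\ba.
\]
A Lagrange-multiplier argument (multiplier $\mu\ge 0$ enforcing $\int h\, d\ba\leq 1$) produces the critical density $h=c/(c\mu+\phi-\phi_o)$. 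If \eqref{marta2} holds, the constraint binds, $\mu>0$, and $h$ equals $f$ from \eqref{tihedus4} with $\theta=\phi_o+c-c\mu$; if \eqref{marta2} fails, $\mu=0$, $h=c/(\phi-\phi_o)$ integrates to $\beta<1$, and the residual mass $(1-\beta)$ concentrates at $x_o$, producing $q^*$ as in \eqref{q-def2}. Strict concavity of $q\mapsto-D(\ba\|q)$ on $\P_1$, together with linearity of $q\mapsto-\langle\phi,q\rangle/c$, ensures the critical point is the unique global maximizer. Hence $I\ge 0$ with $I(q)=0$ iff $q=q^*$.

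With the unique zero of $I$ identified, standard LDP arguments (upper bound on closed sets complementary to a neighborhood of $q^*$) give $\bp\Rightarrow\delta_{q^*}$, which is point~1. For point~2, since $\lambda>0$ Corollary~\ref{cor1} gives $r_q=q$, and Theorem~\ref{thmP} applied to $\bar\pi=\delta_{q^*}$ produces
\[
P^*(A_1\times\cdots\times A_m)=\int_\P\prod_{i=1}^m q(A_i)\,\delta_{q^*}(dq)=\prod_{i=1}^m q^*(A_i),
\]
i.e.\ the i.i.d.\ law with marginal $q^*$. For point~3, the map $r$ of Theorem~\ref{thmQ} is the identity on $\P$, so $Q_n\Rightarrow \delta_{q^*}\circ r^{-1}=\delta_{q^*}$.

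The main obstacle is the variational step. Two subtleties require care: first, a rigorous argument that any candidate maximizer with singular component off $x_o$ is strictly dominated, which uses uniqueness of $x_o$ together with the fact that $\phi-\phi_o$ is bounded below away from zero outside any neighborhood of $x_o$ (by compactness and continuity); second, justifying that the Lagrange critical point is the global maximizer, which rests on strict concavity of $-D(\ba\|\cdot)$ on $\P_1$ in the infinite-dimensional setting. The remaining book-keeping (checking $q^*\in\P_1\setminus\P_o$ when \eqref{marta2} fails, and matching the multiplier $\mu$ to the parameter $\theta\in[\phi_o,\phi_o+c]$ of Lemma~\ref{lemma1bb}) should follow the pattern established in the appendix proof of Theorem~\ref{thm1a}.
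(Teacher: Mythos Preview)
Your high-level structure matches the paper exactly: invoke Theorem~\ref{thmFeng2} (in place of Theorem~\ref{thmFeng}) to obtain an LDP for $\bp$ with rate function $I(q)=\sup_{q'}F(q')-F(q)$, identify $q^*$ as the unique maximizer of $F$, deduce $\bp\Rightarrow\delta_{q^*}$ from the LDP upper bound on closed sets (using upper semicontinuity of $F$ and compactness of $\P$), and then read off parts~2 and~3 from Corollary~\ref{cor1} and Theorems~\ref{thmP}--\ref{thmQ} with $r_q=q$. That skeleton is precisely the paper's.

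The gap is in your variational step. Your uniqueness argument rests on the assertion that $q\mapsto -D(\ba\|q)$ is strictly concave on $\P_1$, and this is false. If $q_1,q_2\in\P_1$ share the same absolutely continuous density $h$ with respect to $\ba$ but carry their singular parts at different $\ba$-null points, then $D(\ba\|q_1)=D(\ba\|q_2)=D(\ba\|(q_1+q_2)/2)=-\int\ln h\,d\ba$, so strict concavity fails along that segment. Your prior reduction (``at the optimum $q^s=(1-\beta)\delta_{x_o}$'') does salvage the argument in principle: once the singular part is pinned at $x_o$, the problem becomes a strictly concave maximization in the density $h$ alone, and the Lagrange analysis goes through. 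But as written, the logic is circular---you invoke strict concavity on $\P_1$ to certify uniqueness after having only established the form of the singular part \emph{at an optimum}, without first proving an optimum exists and without restricting the concavity claim to the reduced class.

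The paper sidesteps this entirely. In Lemmas~\ref{lemma1a} and~\ref{lemma2a} it never appeals to abstract concavity or Lagrange multipliers; instead it fixes the candidate $q^*$ (respectively $f\,d\ba$ or the mixture \eqref{q-def2}) and shows directly, via Jensen's inequality applied to $\int (g/f)\,d\ba$, that $F(q^*)-F(q)>0$ for every competitor $q\in\P_1$, $q\ne q^*$. The equality case of Jensen is then handled by a short explicit computation. This is both more elementary and more robust than the concavity route, and it is what you should emulate (or cite) for the variational part.
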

%------------------
The proof can be found in Appendix.

%----------------------------------

%-------------------------------------------
\paragraph{Example.} Take $\X=[0,1]$, $x_o=0.3$, $\ba$ -- Lebesgue measure and $\phi(x)=|x-x_o|^p$. Then \eqref{marta2} holds, if
\begin{equation}\label{marta2a}
\int_0^1 |x-x_o|^{-p}dx \geq {1\over c}.
\end{equation}
When it is so then limit process is iid process, $X_1,X_2,\ldots$ where $X_i$ has density
$$f(x)={c\over |x-x_o|^p+c-\theta},\quad 0<\theta\leq c.$$
Otherwise   $X_1,X_2,\ldots$ is iid process with
$$P(X_i\in A)=c\int_A |x-x_o|^{-p}dx+(1-\beta)\delta_{x_o}(A),\quad \text{where     }\beta=c\int_0^1 |x-x_o|^{-p}dx.$$
%------------
% \textcolor{blue}{Can this be verified by simulations?}
%-----------

\section{Appendix}

\subsection{The case $\lambda=0$.}

In this section we collect all the technical results that we need to prove the main theorems of Section~\ref{main-lambda0}.
% \subsubsection{Objective function $F$}
Recall that $\P$ stands for the set of all probability measures on ${\cal B}(\X)$ and remember the definitions
$$\P_1:=\{q\in \P: \ba \ll q\}, \quad {\cal P}_o :=\{q\in \P_1: q\ll \ba\}.$$
Thus $\P_o\subset \P_1$. We define the objective function $F$ on $\P_1$ as follows:
\begin{equation}
\label{DEF}F(q):=\ln \langle w,q\rangle -c D(\ba\|q),
\end{equation}
where the relative entropy $D(\ba\|q)$ is defined by equation~\eqref{eq:entropy}.
We now observe that on the set $\P_o$,
$$F(q)=\ln \langle w,g\rangle +c\int_\X \ln g d\ba,\quad \text{where}\quad g={dq\over d\ba}\quad \text{and} \quad\langle w,g\rangle=\int_\X w g d\ba. $$
%-------------
Indeed, $q\in \P_o$ if and only if there exists a measurable function $h$ such that
$q(h>0)=1$ and
$d\bar \a/dq=h$.
%$\bar \a(E)=\int_E g(x)q(dx)$ for every measurable set $E$.
In this case $q(E)=0$ if and only if $\bar \a(E)=0$; moreover
$dq/d\bar\a=1/h$.
%$q(E)=\int_E 1/g(x) \bar \a(dx)$.
% Indeed, when $q\in \P_o$, then $\exists h={d\ba \over d q}$ (because $q\in \P_1)$ and also
% $\exists g={d q \over d \ba}$ (because $q\in \P_0)$.  Now observe that  $\ba(h=0)=0$, which implies that $q(h=0)=0$ because when not, then $q \ll \ba$ would fail.
% For any Borel set $B\subset \X$
% $$\int_B {1\over h}d\ba=\int_{B\cap \{h>0\}}{1\over h}h dq=q(B\cap\{h>0\})=q(B),$$
% so  $h^{-1}$ is indeed a density of $q$ w.r.t. $\ba$, and so $g=h^{-1}$, $\ba$-a.s..
It follows that when $q\in \P_0$ then
$$
D(\ba\|q)=\int_\X \ln \frac{1}{g} d\ba=-\int_\X \ln g d\ba.
% \quad \text{where    } g={dq\over d\ba}\in \F.
$$
Therefore, if $q\in \P_0$, then
$$F(q)=\ln \big(\int_\X w dq \big)-c \int_\X \ln h d\ba=\ln \big(\int_\X w g d\ba  \big)+c \int_\X \ln g d\ba.
%=:F(g).
$$
When $q \in \P_o$, in order to stress the dependence of $F(q)$ on $g=dq/d\bar\alpha$, with a slight abuse of notation, we will write
$F(g)$ instead of $F(q)$.
\\\\
In what follows, we are interested in maximizing $F(q)$ over $\P_1$
% \begin{equation}\label{mx}
% \max_{q\in \P_1}F(q),
% \end{equation}
and finding the argmax, when it exists. We split the maximization problem into two parts: maximizing over $\P_o$ and $\P_1\setminus \P_o$. On $\P_o$, it holds $F(q)=F(g)$, where $g$ is the density of $q$ with respect to $\ba$. Hence
$$\sup_{q\in \P_o} F(q)\leq \sup_{g\in \F} F(g),$$
where ${\cal F}$ is the set of all probability densities with respect to $\ba$. Here there is an inequality, because the definition of $\P_o$ implies that $\supp g=\X$, but $\F$ is the set of all probability densities.
% Besides, we have shown that to every $q\in \P_o$ corresponds $g\in \F$, but not opposite.
We start with maximizing $F$ over $\F$.
%--------------------------------------
\subsubsection{Maximizing $F$ over $\F$}
\paragraph{Inequality \eqref{marta} holds.}
We now show that under \eqref{marta} the function
$f$ as in \eqref{tihedus2} is the unique solution of the
above-stated maximization problem. Observe that $F(f)<\infty$. Indeed, since ${c\over (c+1)}\leq f(x)\leq {c\over (c+1)-
%{\bar{w}\over \theta}}
\frac{w(x_o)}{\theta}}
$ then $\ln f(x)$ is bounded from below; moreover $\ln f(x)\leq f(x)-1$.
%------------------------------------
\begin{lemma}\label{lemma1} If \eqref{marta} holds, then $\sup_{g\in \F} F(g)=F(f)$ and $F(g)<F(f)$ when $g\ne f$, where $f$ is given by  \eqref{tihedus2}.
\end{lemma}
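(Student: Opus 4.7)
The plan is to reduce Lemma \ref{lemma1} to a one-variable strict concavity argument. First I observe that if $\int_\X\ln g\,d\ba=-\infty$ then $F(g)=-\infty<F(f)$ trivially, so I may restrict attention to those $g\in\F$ with $F(g)>-\infty$; note that $F(f)$ is finite, since by \eqref{tihedus2} one has $f\geq c/(c+1)>0$. For such a $g$ I will introduce
\[
\phi(t):=F((1-t)f+tg),\qquad t\in[0,1],
\]
which is well-defined: the convex combination is again a probability density in $\F$, and it is bounded below by $(1-t)c/(c+1)$ on $[0,1)$, so $\ln((1-t)f+tg)$ is well-controlled near $t=0$.

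The crucial computation is $\phi'(0)=0$, which I would obtain by exploiting the Euler--Lagrange identity built into \eqref{tihedus2}: rearranging the definition of $f$ yields
\[
\frac{w(x)}{\theta}+\frac{c}{f(x)}=1+c\qquad\text{for $\ba$-a.e.\ }x,
\]
where $\theta=\langle w,f\rangle$ by Lemma \ref{lemma1b}. Differentiating $\phi$ under the integral and collecting terms gives
\[
\phi'(0)=\frac{\langle w,g-f\rangle}{\theta}+c\int_\X\frac{g-f}{f}\,d\ba=\int_\X(g-f)\left(\frac{w}{\theta}+\frac{c}{f}\right)d\ba=(1+c)\int_\X(g-f)\,d\ba=0,
\]
the last equality because $g$ and $f$ are both probability densities with respect to $\ba$.

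For strict concavity I would compute
\[
\phi''(t)=-\frac{\langle w,g-f\rangle^2}{\langle w,(1-t)f+tg\rangle^2}-c\int_\X\frac{(g-f)^2}{((1-t)f+tg)^2}\,d\ba,
\]
and observe that the integral term is strictly positive whenever $g\ne f$ $\ba$-a.e., so $\phi''(t)<0$ on $[0,1)$. Hence $\phi$ is strictly concave on $[0,1]$, and together with $\phi'(0)=0$ this gives
\[
F(g)=\phi(1)<\phi(0)+\phi'(0)\cdot 1=\phi(0)=F(f).
\]

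The main technical obstacle will be justifying the differentiation under the integral sign that produces the formulas for $\phi'(0)$ and $\phi''(t)$. I would handle this by dominating the integrand uniformly on $t\in[0,1/2]$: the lower bound $(1-t)f+tg\geq (1-t)c/(c+1)\geq c/(2(c+1))$ controls $|\ln((1-t)f+tg)|$ from below, while from above $(1-t)f+tg\leq f+g$ and $\ln(f+g)\leq \ln 2+\max(\ln f,\ln g)\in L^1(\ba)$ by our standing assumption that $F(f), F(g)>-\infty$; analogous bounds dominate the integrands of $\phi'$ and $\phi''$ by multiples of $(g+f)/\min(f,g)^2$, whose integrability I would either verify directly from the explicit form of $f$ and the reduction to $g$ with $\ln g\in L^1(\ba)$, or bypass by restricting first to $g$ bounded above and below, and then approximating a general $g$ by truncations.
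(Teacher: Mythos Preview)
Your approach is correct in spirit and genuinely different from the paper's. The paper argues directly: it writes $F(f)-F(g)=\ln(\theta/\theta')-c\int_\X\ln(g/f)\,d\ba$, applies Jensen to $\int\ln(g/f)\,d\ba$, computes $\int g/f\,d\ba$ explicitly from \eqref{tihedus2}, and reduces the remaining inequality to an elementary one-variable maximization (Proposition~\ref{append}). Your route is the standard variational one: verify the Euler--Lagrange identity $w/\theta+c/f=1+c$ (so $\phi'(0)=0$) and then use strict concavity of $t\mapsto F((1-t)f+tg)$. Your method avoids the auxiliary Proposition~\ref{append} entirely and is arguably more conceptual; the paper's Jensen-based calculation, on the other hand, extends with almost no change to Lemmas~\ref{lemma2}--\ref{lemma4}, where the maximizer may carry an atom and your line-segment argument inside $\F$ would need modification.

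One genuine technical point deserves tightening. Your formula for $\phi''(t)$ requires $\int_\X(g-f)^2\,d\ba<\infty$, which is not guaranteed for $g\in\F$ with $\ln g\in L^1(\ba)$ (only $g\in L^1$), so $\phi''$ may fail to exist. The truncation fallback you mention is also insufficient on its own: it yields $F(g_n)<F(f)$ for bounded $g_n$, but passing to the limit gives only $F(g)\leq F(f)$, losing strictness. The clean fix is to bypass $\phi''$ altogether. Concavity of $\phi$ is immediate: $t\mapsto\ln\langle w,(1-t)f+tg\rangle$ is concave (log of an affine function), and $t\mapsto\int_\X\ln((1-t)f+tg)\,d\ba$ is an integral of pointwise concave functions. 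Strict concavity of the latter holds unless $t\mapsto\ln((1-t)f(x)+tg(x))$ is affine for $\ba$-a.e.\ $x$, i.e.\ unless $g=f$ $\ba$-a.e. Combined with your (easily justified, since $f\geq c/(c+1)$) computation $\phi'(0)=0$, this gives $\phi(1)<\phi(0)$ without any second-derivative or domination issues.
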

%------------
\begin{proof} Let $f$ be the density \eqref{tihedus2}. It suffices to show that for any other $g\in {\cal F}$ such that  $g\ne f$  $\ba$-a.s.
\begin{equation}
F(f)-F(g)=\ln \theta + c\int_\X \ln f d\ba  -\ln \theta' - c\int_\X \ln g d\ba = \ln {\theta\over \theta'} - c \int_\X \ln {g\over f} d\ba >0,
\end{equation}
where  $\theta'=\langle w,g \rangle$. When $\int_\X \ln g d\ba=-\infty$, then the strict inequality holds, otherwise observe that all integrals above are finite and since
$$\int_\X {g\over f} d\ba={1+c\over c}\Big(1-{\theta'\over \theta (c+1)}\Big),$$
by Jensen inequality we get
$$- c \int_\X \ln {g\over f} d\ba > - \ln \Big[{1+c\over c}\Big(1-{\theta'\over \theta (c+1)}\Big)\Big]^c,$$
where the strict inequality follows from assumption $f\neq g$ $\ba$ - a.s.. Therefore, it suffices to show that
$$\ln {\theta\over \theta'}- \ln \Big[{1+c\over c}\Big(1-{\theta'\over \theta (c+1)}\Big)\Big]^c\geq 0.$$
The latter is equivalent to
\begin{equation}\label{jy}\ln \Big[{1+c\over c}\Big(1-{\theta'\over \theta (c+1)}\Big)\Big]^c-\ln {\theta\over \theta'}\leq 0\quad \Leftrightarrow \quad \Big[{1+c\over c}\Big(1-{\theta'\over \theta (c+1)}\Big)\Big]^c
\cdot  {\theta'\over \theta}\leq 1.\end{equation}
Since $
%{\bar{w}\over c+1}
\frac{w(x_o)}{c+1}
\leq \theta=\langle w,f \rangle\leq
w(x_o)$,
%\bar{w}$,
 and $\theta'=\langle w,g \rangle\leq
w(x_o)$,
%\bar{w}$,
it holds that
$$0\leq {\theta'\over \theta (c+1)}\leq
\frac{w(x_o)}{w(x_o)}
%{\bar{w}\over \bar{w}}
=1.$$
Denoting ${\theta'\over \theta (c+1)}=:1-\alpha$, we obtain that the right hand side inequality in \eqref{jy} is
$$\Big[\big(1+{1\over c}\big)\alpha\Big]^c(c+1)(1-\alpha)\leq 1$$
and this holds by Proposition \ref{append}, proven below.\end{proof}

\begin{proposition}\label{append}
\begin{equation}\label{xa}\max_{x\geq 0,\a\in [0,1]}\Big((1+x)(1+{1/x})^x \a^x(1-\a)\Big)=1.
\end{equation}
\end{proposition}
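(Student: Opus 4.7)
\textbf{Proof plan for Proposition \ref{append}.} My plan is to reduce the two-variable maximization to a single-variable maximization by optimizing $\alpha$ for each fixed $x$, and then to verify that the resulting expression is identically equal to $1$. Concretely, fix $x>0$ and regard
\[
 \alpha \longmapsto \alpha^x(1-\alpha)
\]
on $[0,1]$. This function vanishes at the endpoints and is smooth on $(0,1)$, so by first-year calculus its maximum is attained at the unique interior critical point. Differentiating gives
\[
 \frac{d}{d\alpha}\bigl(\alpha^x(1-\alpha)\bigr)=\alpha^{x-1}\bigl(x-(x+1)\alpha\bigr),
\]
so the optimizer is $\alpha^*=\alpha^*(x)=x/(x+1)$, with maximum value
\[
 (\alpha^*)^x(1-\alpha^*)=\Bigl(\tfrac{x}{x+1}\Bigr)^{x}\cdot\tfrac{1}{x+1}.
\]

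The key step is then to plug $\alpha^*$ back into the original product and to observe a telescoping cancellation: for $x>0$,
\[
 (1+x)\Bigl(1+\tfrac{1}{x}\Bigr)^{x}(\alpha^*)^{x}(1-\alpha^*)
 =(1+x)\cdot\Bigl(\tfrac{x+1}{x}\Bigr)^{x}\cdot\Bigl(\tfrac{x}{x+1}\Bigr)^{x}\cdot\tfrac{1}{x+1}=1.
\]
Hence $\sup_{\alpha\in[0,1]}h(x,\alpha)=1$ for every $x>0$, and in particular the supremum over $(x,\alpha)\in[0,\infty)\times[0,1]$ is at most (and, by exhibiting e.g.\ $x=1,\alpha=1/2$, equal to) $1$.

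The only genuine care needed is the degenerate boundary $x=0$, where the expression $(1+1/x)^x$ is formally undefined. I would interpret it by continuity: since $x\ln(1+1/x)\to 0$ as $x\to 0^+$, we have $(1+1/x)^{x}\to 1$, so that $h(0,\alpha)=1\cdot 1\cdot 1\cdot(1-\alpha)=1-\alpha\le 1$, with equality at $\alpha=0$. Thus the supremum is still $1$ and in fact is attained, which gives the claimed maximum. I do not expect any real obstacle here; the only subtlety is phrasing the $x=0$ case cleanly, which is handled by the limiting convention just described.
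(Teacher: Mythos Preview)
Your argument is correct. The paper's proof takes the dual route: it fixes $\alpha\in(0,1)$ and maximizes over $x\geq 0$, computing (via logarithmic differentiation) that $u(x):=(1+x)(1+1/x)^x\alpha^x$ has derivative $u'(x)=\bigl(\ln(1+1/x)+\ln\alpha\bigr)u(x)$, hence a unique stationary point at $x_\alpha=\alpha/(1-\alpha)$, and then checks that $u(x_\alpha)(1-\alpha)=1$. The two optimizers describe the same curve $\alpha=x/(x+1)$, so the arguments meet in the middle. Your approach has the advantage that the inner maximization of $\alpha\mapsto\alpha^x(1-\alpha)$ is completely elementary, whereas the paper's inner maximization requires a slightly more delicate derivative computation; on the other hand, the paper avoids having to discuss the $x=0$ boundary, since for each fixed $\alpha\in(0,1)$ the function $u$ is well defined on $(0,\infty)$ with the correct limits at the endpoints.
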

%--------
\begin{proof} Fix $\a\in (0,1)$. Then
$x_{\alpha}={\a\over 1-\a}$ attains the maximum of
$$u(x):=(1+x)(1+{1/x})^x \a^x$$
over $[0,\infty)$. To see that observe: $\lim_{x\to \infty}u(x)=0$, $\lim_{x\to 0}u(x)=1$ and
$$u'(x)={(\ln u(x))' u(x)}=\big(\ln(1+{1/x})+\ln \a\big)u(x),$$
so that $x_{\a}$ is the only stationary point of the function $u(x)$. Plugging $x_{\a}$ into the left hand side of \eqref{xa}, we obtain
$$(1+x_{\a})(1+{1/ x_{\a}})^{x_{\a}} {\a}^{x_{\a}}(1-\a)=1.$$
\end{proof}
%---------------------------------------------------------
\paragraph{Inequality \eqref{marta} fails.} Let $x_o$ be such that $w(x_o)=\sup_xw(x)=:\bar{w}$.
Right now, we do not assume $x_o$ to be unique.
Since \eqref{marta} fails, we define $\beta$, $f$ and $q^*$ as in \eqref{tihedus3} and \eqref{q-def}.
%-------------------------------
% it holds
%\begin{equation} %\label{tihedus3}
%\beta:=\int_\X f d\ba<1,\quad \text{where}\quad f:={c\over 1+c}\Big(1-\frac w{\bar w}
%%{w\over\bar{w}}
%\Big)^{-1}.\end{equation}
%We define the measure
%\begin{equation} %\label{q-def}
%q^*:=\beta q^a+(1-\beta)\delta_{x_o},
%\end{equation}
%where $q^a$ has density  $\beta^{-1}f$ (with respect to $\ba$). If \eqref{marta} holds with equality, then $\beta=1$ and $q^*$ is as in the previous section.
Clearly
\begin{equation}\label{eq:thetao}
\langle w,q^*\rangle=\langle w,f\rangle+(1-\beta)
\bar{w}=\bar{w}
\big(\beta-{c\over 1+c}+1-\beta\big)=
{\bar{w}\over 1+c}
=\theta_o;
\end{equation}
recall that $\theta_o$ was defined in the proof of Lemma~\ref{lemma1b}.
 Observe that
 $q^* \in \P_1 \setminus \P_o$. Indeed, since
 \eqref{marta} fails, then $0=\bar\a(w^{-1}
(\bar w))
\ge \bar\a(x_o)$ while $q^*(x_o)>0$.
 On the other hand,
%\textcolor{blue}
{since $f(x)>0$ whenever $x\not\in w^{-1}(\bar{w})$, it follows that
  $q^*(E)=0$ implies $q^a(E)=0$ whence $\bar \a(E)=0$.} In this case
 ${d\ba}/{dq^*}=\ident_{\X\setminus\{x_o\}}/f$.
Now, according to \eqref{DEF}
\begin{equation}\label{Fdef}
F(q^*)
% =\ln \langle w,q^*\rangle -c \int \ln h d\ba
=\ln \theta_o+c \int_\X \ln f d\ba.\end{equation}
% For the next lemma to hold, it is not necessary to assume $\ba(x_o)=0$.
%-------------
\begin{lemma}\label{lemma2} Let \eqref{marta} fail and $g\in {\cal F}$. Then $F(g)<F(q^*)$, where $F(q^*)$ is defined as in \eqref{Fdef}.
\end{lemma}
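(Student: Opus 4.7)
The plan is to adapt the Jensen-type argument of Lemma~\ref{lemma1} to the present setting. Note that $f$ defined in \eqref{tihedus3} still has the functional form $f = c/(1+c-w/\theta_o)$ with $\theta_o = w(x_o)/(1+c)$ now equal to $\langle w, q^* \rangle$; the difference from Lemma~\ref{lemma1} is that $f$ is no longer a probability density (it integrates to $\beta < 1$), and $F(q^*)$ now involves $\int_\X \ln f\, d\ba$, which is finite because $f \geq c/(1+c)$ bounds $\ln f$ below by a constant and $\ln f \leq f - 1$ is $\ba$-integrable from above.

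First I would dispose of the case $\int_\X \ln g\, d\ba = -\infty$ (in particular when $g$ vanishes on a set of positive $\ba$-measure): then $F(g) = -\infty < F(q^*)$ and there is nothing to prove. In the remaining case, writing $\theta' := \langle w, g \rangle > 0$, the identity $1/f = (1+c - w/\theta_o)/c$ yields
\begin{equation*}
\int_\X \frac{g}{f}\, d\ba = \frac{1+c}{c} - \frac{\theta'}{c \theta_o}.
\end{equation*}
Jensen's inequality applied to $\ln$ against the probability measure $\ba$ then gives
\begin{equation*}
\int_\X \ln(g/f)\, d\ba \leq \ln\!\Bigl(\frac{1+c}{c} - \frac{\theta'}{c\theta_o}\Bigr),
\end{equation*}
with strict inequality unless $g/f$ is $\ba$-a.s.\ constant. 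Setting $u := \theta'/\theta_o$, the lemma reduces to verifying
\begin{equation*}
h(u) := -\ln u - c \ln\!\Bigl(\frac{1+c-u}{c}\Bigr) \geq 0 \quad \text{on } (0, 1+c),
\end{equation*}
which is a one-variable convexity check: $h'(u) = -1/u + c/(1+c-u)$ vanishes only at $u = 1$, $h'' > 0$, and $h(1) = 0$, so $h \geq 0$ with equality only at $u = 1$.

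The crux is to rule out simultaneous equality in the Jensen step and in $h(u) \geq 0$. Jensen's equality would force $g = \kappa f$ $\ba$-a.s.\ with $\kappa = 1/\beta$ by normalization, giving $\theta' = \beta^{-1}\int_\X w f\, d\ba$. The identity $\int_\X w f\, d\ba + (1-\beta) w(x_o) = \theta_o$ (which is just $\langle w, q^*\rangle = \theta_o$, cf.~\eqref{eq:thetao}) then implies $\theta' = \theta_o$ iff $(1-\beta)\theta_o = (1-\beta)w(x_o)$, i.e.\ iff $\beta = 1$ (since $c > 0$ forces $\theta_o \neq w(x_o)$); as \eqref{marta} fails we have $\beta < 1$ strictly, whence $\theta' \neq \theta_o$ and $h(\theta'/\theta_o) > 0$. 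Consequently, either the Jensen step is strict, or the convex-minimum bound is strict, and in both cases $F(q^*) - F(g) > 0$.

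The main obstacle, beyond the (already-handled) issue of when the logarithmic integrals are genuinely finite, is just cleanly joining the two possible sources of strict inequality; this is accomplished by the dichotomy above, which relies crucially on the failure of \eqref{marta} to give $\beta < 1$. Otherwise the argument parallels Lemma~\ref{lemma1} with $\theta$ replaced by $\theta_o$.
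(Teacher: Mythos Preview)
Your proof is correct and follows essentially the same Jensen-based approach as the paper. The only cosmetic difference is in the equality case: the paper computes $F(\beta^{-1}f)$ directly and reduces to the polynomial inequality $\beta(1+c)-c<\beta^{c+1}$ for $\beta<1$, whereas you fold this into your already-established strict convexity of $h$ by observing that $u=\theta'/\theta_o=1+c-c/\beta\neq 1$ when $\beta<1$ --- both arguments are equivalent and yield the same conclusion.
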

\begin{proof} Observe that $F(q^*)$ is independent of the choice of $x_o$. The proof is the exactly same as that of Lemma \ref{lemma1}. The Jensen inequality is an equality, when $f/g$ is constant $\ba$-a.s.. In our case it means
 $g=\beta^{-1}f$. In this case
 \begin{align*}
 F(g)& =\ln ( \langle w,f\rangle )-\ln \beta +c\int_\X \ln f d\ba-c\ln \beta=\ln \big(
\bar{w}
(\beta-{c\over 1+c})\big)-(c+1)\ln \beta +c \int_\X \ln f d\ba \\
 &< \ln \big(
{\bar{w}\over 1+c}\big)+c \int_\X \ln f d\ba=F(q^*).\end{align*}
 The inequality holds because,
$$ \ln \big(\bar{w}(\beta-{c\over 1+c})\big)-(c+1)\ln \beta < \ln \big({\bar{w}\over 1+c}\big)\quad \Leftrightarrow \quad \beta(1+c)-c<\beta^{c+1}$$
and for $\beta <1$ the L.H.S.~holds.
 \end{proof}
%---------------
\subsubsection{Maximizing $F$ over $\P_1\setminus\P_o$}
\paragraph{Inequality \eqref{marta} holds.} Let $f$ be the density \eqref{tihedus2}.
%------------
\begin{lemma}\label{lemma3} Let $q\in \P_1\setminus\P_o$. Then $F(q)<F(f)$.\end{lemma}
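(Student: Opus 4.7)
My plan is to adapt the Jensen-inequality argument from the proof of Lemma~\ref{lemma1} to accommodate the singular part of $q$. First I would take the Lebesgue decomposition $q = q^a + q^s$ with respect to $\bar\a$, and set $g := dq^a/d\bar\a$ and $\beta := q^a(\X) = \int g\, d\bar\a$. The hypothesis $\bar\a \ll q$, together with $q^s\perp\bar\a$, forces $g>0$ $\bar\a$-a.s.~and $\beta>0$, while $q\notin \P_o$ gives $q^s\ne 0$, so $\beta<1$. Since $d\bar\a/dq=1/g$ holds $\bar\a$-a.s., one obtains $D(\bar\a\|q)=-\int\ln g\, d\bar\a$ and
\[
F(q) = \ln(A+B) + c\int\ln g\, d\bar\a,
\qquad A := \langle w, q^a\rangle,\ B := \langle w, q^s\rangle
\]
(if $\int \ln g\, d\bar\a = -\infty$ then $F(q) = -\infty$ and there is nothing to prove).

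Next I would chain together three inequalities: (i) the crude bound $B\le \bar w(1-\beta)$, where $\bar w := w(x_o)$; (ii) Jensen's inequality $-c\int\ln(g/f)\, d\bar\a \ge -c\ln\int(g/f)\, d\bar\a$ applied exactly as in the proof of Lemma~\ref{lemma1}, using the same identity $\int(g/f)\, d\bar\a = [(1+c)\beta\theta-A]/(c\theta)$; and (iii) the one-variable estimate
\[
\phi(u) := \bigl(u+\mu(1-\beta)\bigr)\bigl((1+c)\beta-u\bigr)^c \le c^c,
\qquad u := A/\theta,\ \mu := \bar w/\theta\in[1,c+1],
\]
which I would prove by maximising $\phi$ in $u$ on $[0,\mu\beta]$. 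At the endpoint $u=0$ this reduces to Proposition~\ref{append}, while at the interior critical point $u^*=\beta-c\mu(1-\beta)/(c+1)$ a direct calculation yields $\phi(u^*)=c^c\bigl[((c+1)\beta+\mu(1-\beta))/(c+1)\bigr]^{c+1}\le c^c$, the last inequality being elementary since $\mu\le c+1$ and $\beta<1$.

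The main obstacle is upgrading this to a \emph{strict} inequality, because each of (i)--(iii) can individually be an equality in the boundary case $\mu=c+1$ (equality in~\eqref{marta}). The key remark is that they cannot all be equalities simultaneously: if (ii) is tight then $g/f$ is $\bar\a$-a.s.~constant, and the normalisation $\int g\, d\bar\a = \beta$ forces $g=\beta f$; hence $A=\beta\theta$, $u=\beta$, and a direct computation gives
\[
\phi(\beta)=c^c\beta^c\bigl(\beta+\mu(1-\beta)\bigr)<c^c
\]
for every $\beta\in(0,1)$ and $\mu\in[1,c+1]$, because $\beta\mapsto\beta^c[\beta+\mu(1-\beta)]$ is strictly increasing on $(0,1)$ with limit value $1$ at $\beta=1$ (an elementary derivative check using $\mu\le c+1$). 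Thus at least one of the three steps must be strict, and $F(f)>F(q)$ follows.
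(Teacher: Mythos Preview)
Your proposal is correct and follows essentially the same route as the paper. Both arguments take the Lebesgue decomposition of $q$ (you phrase it as $q=q^a+q^s$, the paper via $h=d\bar\a/dq$ and $g=1/h$, which gives the same $g$ and the same $\beta=\beta_1$), apply Jensen to $-c\int\ln(g/f)\,d\bar\a$, bound the singular contribution by $\bar w(1-\beta)$, reduce to a one-variable inequality, and then handle the Jensen-equality case $g=\beta f$ by the elementary bound $\beta^c[\beta+\mu(1-\beta)]<1$; the only cosmetic difference is that the paper parametrises the final inequality by $\alpha=\beta-\langle w,g\rangle/(\theta(1+c))$ and invokes Proposition~\ref{append}, whereas you parametrise by $u=A/\theta$ and optimise $\phi(u)$ directly---a linear change of variables away from the same estimate.
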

\begin{proof} Let $h={d\ba\over dq}$. Let $q(h>0)=:\beta_1$. Since $q\not\in \P_o$, $\beta_1<1$, because otherwise $1/h$ would be a density. Clearly $\beta_1>0$. Let $g:=1/h$. Thus $q(g=\infty)>0$, but $\ba(g=\infty)=\ba(h=0)=0$.
For any Borel set $B$
$$\int_B  gd \ba=\int_{B \cap \{h>0\}} gd \ba= \int_{B \cap \{h>0\}} {1\over h} h dq=\int_{B\cap \{h>0\}}dq=q(B\cap \{h>0\}).$$
Whence
\begin{equation}\label{eq:fundamentalq}
 q(B)=\int_B g d\bar \a+q(\{h=0\} \cap B)
\end{equation}
which, in particular, implies $q(h=0)=1-\beta_1$.\\
%-----
We also get that $$\int_\X g d\ba=q(h>0)=\beta_1,\quad \int_{\{h>0\}}w dq=\int_{\{h>0\}}w g d\ba=\int_\X wg d \ba,$$
because $\ba(h=0)=0$ and so
\begin{equation}\label{kpool}
\theta'=\langle w,q\rangle=\int_{\{h>0\}}wg d\ba+\int_{\{h=0\}}wdq\leq \int_\X wg d\ba+\bar{w}(1-\beta_1)=\langle w,g\rangle+\bar{w}(1-\beta_1),\end{equation}
where the equality holds if and only if $
q(h=0)=q(h=0, \, w=\bar w)\equiv 1-\beta_1$.
By definition \eqref{DEF},
\begin{equation}\label{muhvik}
F(q)=\ln[\langle w,q\rangle]-c\int_\X  \ln h d\ba = \ln \theta'+c\int_\X \ln g d\ba.\end{equation}
Therefore, it suffices to prove that
\begin{equation} F(f)-F(q)=\ln \theta+c\int_\X \ln f d\ba - \ln \theta'- c\int_\X \ln g d\ba= \ln {\theta\over \theta'} - c \int_\X \ln {g\over f} d\ba >0.\end{equation}
The proof follows the steps   of Lemma \ref{lemma1}. In this case
\begin{equation}\label{shabe}\int_\X {g\over f} d\ba={1+c\over c}\Big(\beta_1-{\langle w,g\rangle \over \theta (1+c)}\Big)\end{equation} so that, after using the Jensen inequality, instead of the inequality \eqref{jy}, now we have
\begin{equation}\label{jy2}
\Big[{1+c\over c}\Big(\beta_1-{\langle w,g\rangle \over \theta (1+c)}\Big)\Big]^c{\theta'\over \theta}\leq 1.
\end{equation}
To see that \eqref{jy2} holds, define
$$\a:=\beta_1-{\langle w,g\rangle \over \theta (1+c)}.$$
Since $\langle w,g\rangle \leq \bar{w}\beta_1$ and $\theta(1+c)\geq \bar{w}$, it holds that $\a\in[0,\beta_1]$. On the other hand, by \eqref{kpool}
$${\theta'\over \theta}\leq {\langle w,g\rangle\over \theta}+{\bar{w}(1-\beta_1)\over \theta}\leq (\beta_1-\a)(1+c)+(1+c)(1-\beta_1)=(1+c)(1-\a).$$
Hence
$$\Big[{1+c\over c}\Big(\beta_1-{\langle w,g\rangle \over \theta (1+c)}\Big)\Big]^c{\theta'\over \theta}\leq (1+c)\big(1+{1\over c}\big)^c\alpha^c(1-\a)\leq 1,$$
where the last inequality comes from Proposition \ref{append}. This proves
the strict inequality if the Jensen inequality is strict.
%--------------
%--------
\\
Since $\int_\X g d\ba=\beta_1<1$, we obtain that the Jensen inequality is an equality if and only if $f=\beta_1^{-1}g$; in this case by \eqref{kpool}, since $\langle w,g\rangle=\beta_1\langle w,f\rangle =\beta_1 \theta$, whe have
$\theta'\leq \theta (\beta_1+(1-\beta_1){\bar{w}\over \theta})$ and $\int_\X \ln {g\over f} d\ba=\ln (\beta_1)$ so that
$$\ln{\theta\over \theta'} - c \int_\X \ln {g\over f} d\ba\geq -\ln(\beta_1+(1-\beta_1){\bar{w}\over \theta})-c \ln \beta_1\geq -\ln(\beta_1+(1-\beta_1)(1+c))-c \ln \beta_1 >0,$$
because for every $c>0$, it holds
$$\beta_1^c(\beta_1+(1-\beta_1)(1+c))=\beta_1^c(1-c\beta_1+c)<1.$$
% $\Big[u(c)=\beta_1^c[1-c\beta_1+c]$ is such that $u(0)=1$, $\lim_{c\to \infty}u(c)=0$ and $$u'(c)=(\beta_1)^c\ln (\beta_1)[1-c\beta_1+c]+(\beta_1)^c(1-\beta_1)<0,$$ because
% $\ln (\beta_1)[1-c\beta_1+c]+(1-\beta_1)<(\beta_1-1)[1+c(1-\beta_1)]+(1-\beta_1)<0\Big]$.
\end{proof}
%------------
\paragraph{Inequality \eqref{marta} fails.}  \begin{lemma}\label{lemma4} Assume that $x_o$ is unique. Let $q\in \P_1\setminus\P_o$ and $q\ne q^*$, where $q^*$ is defined as in \eqref{q-def}. Then $F(q)<F(q^*)$.\end{lemma}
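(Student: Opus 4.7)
I would mirror the argument of Lemma \ref{lemma3}, with $\theta$ replaced by $\theta_o=\bar w/(1+c)$ and with $f$ given by \eqref{tihedus3}. The key algebraic observation is that $f=c/(1+c-w/\theta_o)$, which is formally identical to \eqref{tihedus2} after the substitution $\theta\mapsto\theta_o$. Decomposing $q\in\P_1\setminus\P_o$ exactly as in Lemma \ref{lemma3} via $h=d\bar\alpha/dq$, $g=1/h$, $\beta_1=q(h>0)$, the identity \eqref{eq:fundamentalq} still holds, $\int_\X g\,d\bar\alpha=\beta_1$, and the same computation gives
\[
\int_\X \frac{g}{f}\,d\bar\alpha=\frac{1+c}{c}\Bigl(\beta_1-\frac{\langle w,g\rangle}{\theta_o(1+c)}\Bigr).
\]
Using \eqref{muhvik} for $F(q)$ (still valid as $\ba(h=0)=0$) and \eqref{Fdef} for $F(q^*)$, I get
\[
F(q^*)-F(q)=\ln\frac{\theta_o}{\theta'}-c\int_\X \ln\frac{g}{f}\,d\bar\alpha,\qquad \theta':=\langle w,q\rangle,
\]
and Jensen's inequality (applied with the probability measure $\bar\alpha$ and concave $\ln$) reduces the non-strict inequality $F(q^*)\ge F(q)$ to showing
\[
\Bigl[\tfrac{1+c}{c}\bigl(\beta_1-\tfrac{\langle w,g\rangle}{\theta_o(1+c)}\bigr)\Bigr]^c\,\tfrac{\theta'}{\theta_o}\le 1,
\]
which follows exactly as in Lemma \ref{lemma3}: setting $\alpha:=\beta_1-\langle w,g\rangle/(\theta_o(1+c))\in[0,\beta_1]$ and combining \eqref{kpool} with $\theta_o(1+c)=\bar w$ yields $\theta'/\theta_o\le(1+c)(1-\alpha)$, and Proposition \ref{append} closes the estimate.

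\textbf{Where uniqueness of $x_o$ enters and how I would get the strict inequality.} The hard part is showing that $F(q^*)-F(q)>0$ when $q\ne q^*$, since this requires analysing all equality cases simultaneously. If Jensen is strict we are done. If Jensen is an equality, then $g/f$ is $\bar\alpha$-a.s.\ constant, so $g=(\beta_1/\beta)f$. Equality in \eqref{kpool} forces $q(\{h=0\}\cap\{w<\bar w\})=0$; because $x_o$ is the \emph{unique} maximiser of $w$, this pins the singular part of $q$ down to $(1-\beta_1)\delta_{x_o}$. Thus, in the extremal case, $q$ is determined by the single parameter $u:=\beta_1/\beta\in[0,1/\beta]$, and using the identity $\int wf\,d\bar\alpha=\theta_o[(1+c)\beta-c]$ (obtained from $wf=\theta_o[(1+c)f-c]$), a direct computation gives $\theta'=\theta_o(1+c-cu)$, so
\[
F(q^*)-F(q)=-\ln\bigl(1+c(1-u)\bigr)-c\ln u.
\]
It then suffices to show $v(u):=u^c\bigl(1+c(1-u)\bigr)<1$ for $u\ne 1$. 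I would verify $v(1)=1$ and compute $v'(u)=c(1+c)u^{c-1}(1-u)$, so $v$ is strictly increasing on $(0,1)$ and strictly decreasing on $(1,\infty)$, giving a unique maximum at $u=1$. Finally, $u=1$ corresponds exactly to $q=q^*$ (since then $g=f$ and the singular part is $(1-\beta)\delta_{x_o}$), so the assumption $q\ne q^*$ forces $u\ne 1$ and hence strict inequality.

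\textbf{Main obstacle.} The technically delicate step is the simultaneous treatment of the three equality cases (Jensen, \eqref{kpool}, and Proposition \ref{append}), and the verification that the residual one-parameter family parametrised by $u=\beta_1/\beta$ really attains its supremum only at $q^*$. This is precisely where the uniqueness of $x_o$ is indispensable: without it, $q$ could place mass on any point of $w^{-1}(\bar w)$ and still saturate \eqref{kpool}, destroying the identification $q=q^*$ in the equality case.
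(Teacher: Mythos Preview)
Your proof is correct and follows essentially the same route as the paper: decompose $q$ via $h=d\bar\alpha/dq$, apply Jensen, bound using \eqref{kpool} and Proposition~\ref{append}, and then chase the equality cases. The paper splits the Jensen-equality case into $\beta_1=\beta$ versus $\beta_1\ne\beta$ and proves the strict inequality $\beta^{c+1}>\beta_1^c(\beta(1+c)-\beta_1 c)$ by a direct function study; your parametrisation $u=\beta_1/\beta$ and the analysis of $v(u)=u^c(1+c-cu)$ is exactly the same inequality after dividing by $\beta^{c+1}$, and your derivative computation $v'(u)=c(1+c)u^{c-1}(1-u)$ is arguably tidier.

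One small presentational point: when you write ``a direct computation gives $\theta'=\theta_o(1+c-cu)$'', this equality holds only in what you call the extremal case (both Jensen and \eqref{kpool} saturated). In general, once Jensen is an equality you only have $\theta'\le\theta_o(1+c-cu)$ from \eqref{kpool}, hence $F(q^*)-F(q)\ge -\ln(1+c-cu)-c\ln u$. Your function analysis then gives strict positivity for $u\ne 1$ regardless of whether \eqref{kpool} is strict; for $u=1$ the lower bound is zero, and it is precisely here that uniqueness of $x_o$ forces $q=q^*$ if \eqref{kpool} is also an equality, while strictness of \eqref{kpool} gives $F(q^*)-F(q)>0$ directly. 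This is implicit in what you wrote, but making the inequality explicit closes the sub-case ``Jensen equality, \eqref{kpool} strict'' that your current phrasing skips over.
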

\begin{proof} We know that, since \eqref{marta} fails, then $\ba(x_o)=0$, whence $q^*\in \P_1\setminus \P_o$ and since $x_o$ is unique, the construction \eqref{q-def} uniquely defines $q^*$ as well.
We have to prove
\begin{equation}\label{neljas} F(q^*)-F(q)=\ln {\theta_o\over \theta'} - c \int_\X \ln {g\over f} d\ba >0,
\end{equation}
where
$$\theta_o={\bar{w}\over 1+c},\quad h={d\ba\over dq},\quad  g=h^{-1},\quad \beta_1=\int_\X g d\ba,\quad \theta'=\langle w,q\rangle\leq \langle w,g\rangle +(1-\beta_1){\bar w}$$
and $f$ is as in \eqref{tihedus3}. Observe $\int_\X f d\ba =\beta={\langle w,f \rangle\over \bar{w}}+{c\over 1+c} \geq {c\over 1+c}$. Again, we follow the steps of Lemma \ref{lemma1}. Since now $\theta_o (1+c)={\bar w}$, from \eqref{shabe} we get
\begin{equation}\label{shabe2}\int_\X {g\over f} d\ba={1+c\over c}\Big(\beta_1-{\langle w,g\rangle \over {\bar w}}\Big)\end{equation}
and with $\a=\beta_1-{\langle w,g\rangle \over {\bar w}}$ and as in the proof of Lemma \ref{lemma3} we obtain that \eqref{jy2} holds. \\\\
When the Jensen inequality is strict, there is nothing to prove.
Observe that the Jensen inequality is an equality if and only if $g={\beta_1\over \beta}f$ $\ba$-a.s. So, when $\beta=\beta_1$, it means that $f=g$. This implies that the inequality
$\theta'\leq \langle w,g\rangle +(1-\beta_1){\bar w}=\langle w,f\rangle +(1-\beta){\bar w}=\theta_o$ must be strict. Indeed, otherwise by \eqref{kpool}, $q(h=0)=q(h=0,\, w=\bar w)=1-\beta_1$ but $w^{-1}(\bar w)=\{x_o\}$, thus $q(x_o)=1-\beta_1=1-\beta=q^*(x_o)$ whence $q=q^*$.
% the measure $q$ must be as in \eqref{q-def} which contradicts the fact that $q^*$ is unique.
But when  $\theta'<\theta_o$, and $g=f$, then \eqref{neljas} trivially holds.
\\
Let us now consider the case $\beta_1 \ne \beta$ and $g={\beta_1\over \beta}f$ $\ba$-almost surely.
In this case
$$\langle w,g\rangle={\beta_1\over \beta}\langle w,f\rangle={\beta_1\over \beta}\big(\beta-{c\over 1+c}\big)\bar{w}$$
so that $$\theta'\leq \langle w,g\rangle + (1-\beta_1)\bar{w}=\bar{w}\big(1-{\beta_1\over \beta}{c\over 1+c}\big)=\bar{w}\big({\beta(1+c)-\beta_1 c\over \beta(1+c)}\big),\quad \ln {\theta_o\over \theta'}\geq \ln {\beta \over \beta(1+c)-\beta_1 c}.$$
Now
$$\ln{\theta_o\over \theta'} - c \int_\X \ln {g\over f} d\ba\geq \ln {\beta \over \beta(1+c)-\beta_1 c} -c \ln {\beta_1\over \beta}>0.$$
To see that the last inequality holds note that it is equivalent to
$$\beta^{c+1}> \beta_1^c(\beta(1+c)-\beta_1 c).$$
The function $\beta\mapsto \beta^{c+1} - \beta_1^c(\beta(1+c)-\beta_1 c)$ is strictly positive at ${c\over c+1}$ and at 1 (both statements hold for any $\beta_1\in [0,1]$ and $c>0$). The function has unique minimum at $\beta_1$, where it equals 0 and hence it is strictly positive elsewhere.\end{proof}
%---------------------------------------------

\subsubsection{Proof of the main theorem when $\lambda=0$.}
%
% Here we finally prove the main result in the case when $\lambda=0$.

\begin{proof}[Proof of Theorem~\ref{thm1a}]
\begin{enumerate}
\item
For any $q\in \P_1$, $G(q)-D(\ba\|q)={1\over c}\ln(\langle w,q
\rangle)-D(\ba\|q)={1\over c}F(q)$. Since for $q\not \in \P_1$, $D(\ba \|q)=\infty$, but $G(q)\geq 0$, it holds that
$$\sup_{q\in \P}[G(q)-D(\ba\|q)]=\sup_{q\in \P_1} {1\over c}F(q)={1\over c}F(q^*).$$
Indeed,
if \eqref{marta} holds, the last equality follows from Lemma \ref{lemma1} and Lemma \ref{lemma3}. On the other hand, if \eqref{marta} fails, then we apply Lemma \ref{lemma2} and Lemma \ref{lemma4} instead. By assumption $x_o$ is unique
% and $\ba(x_o)=0$,
so
$q^*\in \P_1$ is the unique maximizer
of $F(q)$. Therefore
$$I(q)=\left\{
         \begin{array}{ll}
          {1\over c}\big( F(q^*)-F(q)\big), & \hbox{if $q\in \P_1$;} \\
           \infty, & \hbox{else.}
         \end{array}
       \right.
$$
The LDP implies: for any closed set $C$:
\begin{equation}\label{ldp}
\lim\sup_{n}{1\over cn}\ln \bp(C)\leq -\inf_{q\in
C}I(q).\end{equation}
%\textcolor{blue}
{
By Lemma 6.2.13 in \cite{LDP},
$$
D(\ba\|q)
=\sup_{g\in C_b} [\langle g ,\ba\rangle-\ln \langle e^g,q\rangle].$$
For every $g$, the function $q\mapsto \langle g,\ba\rangle-\ln \langle e^g,q\rangle$ is continuous and so their supremum $D(\ba\|\cdot)$ is lower semicontinuous. Therefore
$F$ is a upper semicontinuous function.
Now  take $C=B^c$, where $B$ is an open ball (with respect to the Prokhorov metric) containing $q^*$. Thus, $C$ is compact and so $\sup_{q \in C}F(q)=F(q_o)$ for some $q_o \in C$.  Moreover
we know that $F(q) <F(q^*)$ for every $q \neq q^*$, in particular $F(q_o)<F(q^*)$ since $q^* \not \in C$. So we have shown that}
%--------------
\begin{equation}\label{j}
\sup_{q\in C}F(q)<F(q^*).
\end{equation}
From \eqref{j} we have $\inf_{q\in C}I(q)>0$ and so from \eqref{ldp}, it follows that
$\bp(C)\to 0$ exponentially fast. This means $\bp \Rightarrow
\delta_{q^*}$.
% The equality  $\sup_{q\in
% C}F(q)=F(q^*)$ implies the existence of $q_n\in C$ so
% that $F(q_n)\to F(q^*)$. Since $\P$ is compact,
% %[because $\X$ is by assumption],
% then there exists a convergent subsequence $q_{n'}\Rightarrow q$.  Lemma 6.2.13 in \cite{LDP} states that for every $q$, $D(\cdot\|q)=\Lambda^*(\cdot)$, where
% $$
% \Lambda^*(p)=\sup_{g\in C_b}[\langle g,p \rangle - \ln \langle e^g,q \rangle]=\sup_{g\in M_b}[\langle g,p \rangle - \ln \langle e^g,q \rangle].
% $$
% %,\quad \Lambda_q(g):=\ln \langle e^g,q \rangle.$$
% Here $M_b$ stands for bounded measurable functions on $\X$.
% \\
% \textcolor{red}{Do we need measurable functions? why not just continuous? the map is continuous if $g$ is continuous, but for measurable?}
% \\
% Thus
% $$D(\ba||q)=\sup_{g\in C_b}[\langle g,\ba \rangle - \ln \langle e^g,q \rangle].$$
% For any $g$,  $q\mapsto \langle g,\ba \rangle - \ln \langle e^g,q \rangle$ is continuous, and the supremum of continuous functions is lower semi-continuous. Thus $q \mapsto D(\ba\|q)$ is lower semicontinuous on $\P$, and since $G$ is continuous, we obtain the $F(q)$ is upper semi-continuous function. This means that the convergence $q_{n'}\Rightarrow q$ implies $F(q)\leq \lim\sup_{n'} F(q_{n'})=F(q^*)$. Since $q^*$ is unique, we have $q=q^*$ which is impossible  because $C$ is a closed set that does not
% contain $q^*$. The convergence of $Q_n$ follows directly from Theorem \ref{thmQ} (here $w_n=w$ since $\lambda=0$).
\item
When $\lambda=0$, $w_n=w$ and so $r_{q,n}=r_q$. By Theorem \ref{thmP} and Corollary \ref{cor1},
%it immediately follows that under the assumptions of  Theorem \ref{thm1a},
 the limit process $X_1,X_2,\ldots$ exists and it is  the i.i.d process, where $X_i\sim r_{q^*}$.
%The other assumptions of Theorem \ref{thmP} trivially hold: $\sup_{q\in {\cal P}}|r_{q,n}(A)-r_q(A)|=0$ and since $w$ is continuous and bounded, $q\mapsto r_q$ is continuous.
When \eqref{marta} holds, then
$$
r_{q^*}(A)=
%{1\over \theta}
\frac1{\langle w,f\rangle}\int_A w(x)f(x)\ba(dx),\quad A\in \B(\X),
$$
where $f$ is as in \eqref{tihedus2}. When \eqref{marta} fails, then (recall \eqref{eq:thetao})
$$
r_{q^*}(A)={1\over \theta_o}\Big(\int_A w(x)f(x)\ba(dx)+(1-\beta) w(x_o)
%\bar{w}
\delta_{x_o}(A)\Big),\quad A\in \B(\X),
$$
where $f$ is as in \eqref{tihedus3} and  $\beta=\int_\X f d\ba$. Since $\theta_o=\frac{w(x_o)}{1+c}$,
%{\bar{w}\over 1+c}$,
we see that when \eqref{marta} fails then
%$\bar \a(w=\bar w)=0$,
$\bar \a(\{x\colon w(x)= w(x_o)\})=0$,
hence the proportion of $x_o$-types in the limit population equals
 $r_{q^*}(\{x_o\})=(1-\beta)(1+c)$.
\item
The convergence of $Q_n$ follows from Theorem \ref{thmQ}.
\end{enumerate}
\end{proof}\\\\
%----------
%\textcolor{red}{I put the semi-continuity argument here and removed it from the next page}.
%---------------------
\subsection{The case $\lambda \in (0,1)$.}
\label{main-lambda01appendix}
%%% Everything below goes into the Appendix
This section contains the technical result that we need to prove the main theorems of Section~\ref{main-lambda01}.
%----------------------------------
\subsubsection{A generalization of Theorem \ref{thmFeng}}
In Section~\ref{main-lambda01}, we stated Theorem~\ref{thmFeng2} which is a generalization of Theorem~\ref{thmFeng} for uniform convergence. Our first task now is to prove this theorem.
The proof relies on
Theorems~\ref{thm:FengLDP}, \ref{thm:var} and
\ref{thm-abi}
below.
%-----------------------
\begin{theorem}\label{thm:FengLDP}
 {\rm (\cite{Feng}, Theorem 9.2)} Assume $\X$ is compact. Then $\pi_n$ satisfies the LDP with speed $m_n^{-1}$ and rate function $H(q)=D(\ba\|q)$.\end{theorem}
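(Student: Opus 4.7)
\textbf{Proof plan (Theorem \ref{thm:FengLDP}).} The strategy is a projective limit argument based on finite-dimensional projections. For each finite Borel partition $\mathcal{A}=\{A_1,\ldots,A_k\}$ of $\X$, consider the continuous projection $\phi_{\mathcal{A}}:\P\to\Delta_k$, $q\mapsto(q(A_1),\ldots,q(A_k))$, where $\Delta_k$ is the $(k-1)$-simplex. By the defining property of $\pi_n=DP(m_n,\ba)$ stated at the beginning of Section \ref{sec:dir}, the pushforward $(\phi_{\mathcal{A}})_*\pi_n$ is the Dirichlet distribution with parameters $(m_n\ba(A_1),\ldots,m_n\ba(A_k))$. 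I would first prove a finite-dimensional LDP for each such projection, and then lift it to $\P$ by Dawson--G\"artner, identifying the resulting rate function with $D(\ba\|q)$.

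For the finite-dimensional step, assume first $\ba(A_i)>0$ for all $i$. The Dirichlet density on $\Delta_k$ is
\[
f_n(p_1,\ldots,p_k)=\frac{\Gamma(m_n)}{\prod_i\Gamma(m_n\ba(A_i))}\prod_i p_i^{m_n\ba(A_i)-1}.
\]
Stirling's formula gives $\frac{1}{m_n}\log f_n(p)\to -\sum_i\ba(A_i)\log(\ba(A_i)/p_i)$ uniformly on compact subsets of the interior of $\Delta_k$; combined with standard Laplace-type bounds for closed sets touching the boundary, this yields the LDP on $\Delta_k$ with speed $m_n^{-1}$ and good rate function $H_{\mathcal{A}}(p)=\sum_i\ba(A_i)\log(\ba(A_i)/p_i)$. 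Partitions with some $\ba(A_j)=0$ are handled by the convention $0\log 0=0$ and the fact that the corresponding marginal concentrates on $\{p_j=0\}$.

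For the projective limit step, equip $\P$ with the weak topology; since $\X$ is compact, $\P$ is compact, so $\{\pi_n\}$ is trivially exponentially tight. The directed family of finite Borel partitions (ordered by refinement), together with the maps $\phi_{\mathcal{A}}$, realizes $\P$ as the projective limit of the finite simplices $\Delta_{\mathcal{A}}$ in the sense required by the Dawson--G\"artner theorem (see Dembo--Zeitouni, Thm.~4.6.1). Applying it yields the LDP for $\pi_n$ on $\P$ with good rate function
\[
H(q)=\sup_{\mathcal{A}}H_{\mathcal{A}}(\phi_{\mathcal{A}}(q))=\sup_{\mathcal{A}}\sum_{i=1}^{|\mathcal{A}|}\ba(A_i)\log\frac{\ba(A_i)}{q(A_i)}.
\]
The identification of this supremum with $D(\ba\|q)$ is the classical Gelfand--Yaglom--Perez variational formula: the inequality $\leq$ is a direct application of Jensen's inequality on each partition; the reverse is obtained, when $\ba\ll q$, by approximating $d\ba/dq$ in $L^1(q)$ by simple functions adapted to finer and finer partitions, and, when $\ba\not\ll q$, by exhibiting a partition that makes the sum arbitrarily large.

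The main technical obstacle I expect is the first step: the Dirichlet density degenerates at the boundary of $\Delta_k$ (it either blows up or vanishes depending on whether $m_n\ba(A_i)<1$ or $>1$), so the upper bound of the LDP must be verified carefully on closed sets that intersect that boundary, rather than relying on a naive interior Laplace argument. Once this finite-dimensional LDP is secured with the correct identification of $H_{\mathcal{A}}$, both the Dawson--G\"artner lift and the variational identification of $H$ are essentially routine, and goodness of $H$ follows from the compactness of $\P$ together with the lower semicontinuity of $D(\ba\|\cdot)$ in the weak topology.
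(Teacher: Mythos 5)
The paper offers no proof of this statement: it is quoted verbatim from Feng's monograph (Theorem 9.2 there), so the only thing to compare your plan against is the standard argument in the literature, which is indeed the one you outline — finite-dimensional Dirichlet marginals, Stirling asymptotics for the normalizing constants, a lift to $\P$, and the partition-supremum (Gelfand--Yaglom--Perez / Donsker--Varadhan) representation of $D(\ba\|q)$. Your finite-dimensional computation is right: the log-normalizer contributes $-\sum_i\ba(A_i)\log\ba(A_i)+o(1)$ per unit of $m_n$, giving the rate $H_{\mathcal{A}}(p)=\sum_i\ba(A_i)\log(\ba(A_i)/p_i)$, and you correctly flag the boundary of the simplex as the delicate part of the upper bound. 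Feng packages the lift slightly differently (via local ball estimates of the type in his Theorem B.6, which this paper reuses for Theorem \ref{thmFeng2}) rather than invoking Dawson--G\"artner by name, but the substance is the same.

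There is one concrete gap in your plan as written: the maps $\phi_{\mathcal{A}}:q\mapsto(q(A_1),\ldots,q(A_k))$ are \emph{not} continuous on $\P$ in the weak topology for general Borel partitions ($q\mapsto q(A)$ is weakly continuous only at measures with $q(\partial A)=0$), so Dawson--G\"artner does not apply directly to realize $(\P,\text{weak})$ as the projective limit of the simplices. The standard repair, exactly as in the projective-limit proof of Sanov's theorem (Dembo--Zeitouni, Section 6.2), is to prove the LDP first in the $\tau$-topology generated by the evaluations $q\mapsto q(A)$ (where the $\phi_{\mathcal{A}}$ are continuous by construction), check that the rate function is $+\infty$ off the countably additive probability measures so that the LDP restricts from the space of finitely additive set functions to $\P$, and then transfer to the coarser weak topology via the contraction principle applied to the identity map; goodness in the weak topology is then immediate from compactness of $\P$. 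With that detour inserted, and with the boundary estimates you already identified carried out, your outline is a faithful reconstruction of the cited proof.
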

%-------------------
Here the rate function $H(q)$ equals $D(\ba\|q)$.
%\textcolor{blue}
{As argued in the proof of Theorem \ref{thm1a}, $D(\ba\|\cdot)$
is lower semicontinuous} and so the level set $\{q: D(\ba\|q)\leq \alpha\}$ is closed for every $\alpha>0$.
Recall that a rate function is good if all level sets are compact; hence if $\X$ is compact, then the rate function
$H(q):=D(\ba\|q)$ is good.
%---------------
\begin{theorem}\label{thm:var} {\rm (Varadhan lemma (\cite[Theorem B.1]{Feng})} Assume $\X$ is compact and $\pi_n$ satisfies the  LDP with speed $m_n$ and good rate function $H$. Let $G_n$ and $G$ be a family of continuous functions on $\P$  satisfying
$\sup_q |G_n(q)-G(q)|\to 0$
Then
\begin{equation}\label{vard}
\lim_n {1\over m_n} \ln \int_\P \exp[m_n \cdot G_n(q)]\pi_n(dq)=\sup_q \big(G(q)-H(q)\big).
\end{equation}
\end{theorem}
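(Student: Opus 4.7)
The plan is to bootstrap from the classical Varadhan lemma applied to the limit function $G$. Since $G_n \to G$ uniformly on $\P$ and each $G_n$ is continuous, the function $G$ is continuous on $\P$; because $\X$ is compact, the space $\P$ (with the Prokhorov topology) is compact as well, so $G$ is in particular bounded. Under these hypotheses the classical Varadhan lemma applied to the sequence $\pi_n$ (which, by hypothesis, satisfies an LDP with good rate function $H$) yields
$$\lim_n \frac{1}{m_n}\ln \int_\P \exp[m_n G(q)]\,\pi_n(dq)=\sup_{q\in\P}\big(G(q)-H(q)\big).$$

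I would then exploit the uniform closeness to transfer this conclusion from $G$ to $G_n$. Set $\e_n:=\sup_q|G_n(q)-G(q)|$, so that $\e_n \to 0$ by hypothesis, and note the pointwise bound $G(q)-\e_n\le G_n(q)\le G(q)+\e_n$ for every $q \in \P$. Multiplying by $m_n$, exponentiating, and integrating against $\pi_n$ gives
$$e^{-m_n\e_n}\int_\P e^{m_n G(q)}\pi_n(dq)\le \int_\P e^{m_n G_n(q)}\pi_n(dq)\le e^{m_n\e_n}\int_\P e^{m_n G(q)}\pi_n(dq).$$

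Applying $m_n^{-1}\ln(\cdot)$ throughout, the outer expressions become $\pm\e_n + \frac{1}{m_n}\ln \int_\P e^{m_n G(q)}\pi_n(dq)$, and each converges to $\sup_q \big(G(q)-H(q)\big)$ by the classical Varadhan lemma already invoked; a sandwich argument then delivers the stated limit.

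I do not anticipate a serious obstacle: once the classical Varadhan lemma is invoked as a black box, the uniform convergence reduces the generalization to an elementary $\pm\e_n$ perturbation that vanishes in the limit. The only point requiring care is verifying the hypotheses of the classical statement (continuity and boundedness of $G$, and goodness of the rate function $H$), all of which are immediate from the compactness of $\P$, the continuity of each $G_n$, and the uniform convergence $G_n \to G$.
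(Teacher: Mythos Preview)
Your argument is correct: reducing to the classical Varadhan lemma for the fixed function $G$ and then sandwiching via $e^{\pm m_n\e_n}$ is exactly the right way to absorb the uniform perturbation, and all the hypotheses you check (continuity and boundedness of $G$ on the compact space $\P$, goodness of $H$) are in order.

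As for comparison: the paper does not give its own proof of this statement. It is quoted as Theorem B.1 from Feng's monograph and used as a black box (together with Theorems~\ref{thm:FengLDP} and~\ref{thm-abi}) in the proof of Theorem~\ref{thmFeng2}. So there is no in-paper argument to compare against; your derivation simply supplies what the paper outsources to the reference.
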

%--------------
\begin{theorem}\label{thm-abi} {\rm (\cite{Feng}, Theorem B.6))}  Assume $\X$ is compact and $\bp$ is such that
\begin{align*}
\lim_{\delta\to 0} \lim\sup_n {1\over m_n}  \ln \bp \big(B(q,\delta)\big)=\lim_{\delta\to 0}\lim\inf_{n} {1\over m_n} \ln \bp \big(B^o(q,\delta)\big)=-I(q),\end{align*}
where
$B(q,\delta):=\{p:d(p,q)\leq \delta\},\quad B^o(q,\delta):=\{p:d(p,q)< \delta\}$ and $I$ is a good rate function. Then $\bp$ satisfies the LDP with rate function $I$ and speed $m_n^{-1}$.
\end{theorem}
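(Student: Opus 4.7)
The plan is to verify the hypotheses of Theorem \ref{thm-abi}, that is, show that the closed-ball $\limsup$ and open-ball $\liminf$ of $\frac{1}{m_n}\ln\bp$ both converge to $-I(q)$, with $I$ a good rate function. First, Theorem \ref{thm:FengLDP} gives that $\pi_n$ satisfies the LDP at speed $m_n^{-1}$ with rate $H(q):=D(\ba\|q)$, which is good because $\P$ is compact. Next, since $G_n\to G$ uniformly on $\P$ and $G$ is continuous (hence bounded), Varadhan's lemma (Theorem \ref{thm:var}) yields
\[
\lim_n \frac{1}{m_n}\ln Z_n \;=\; \sup_{q'\in \P}\bigl[G(q')-H(q')\bigr] \;=:\; M.
\]

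The key step is a simple sandwich on balls. Fix $q\in\P$ and $\e>0$; uniform convergence provides $n_0$ with $\sup_p|G_n(p)-G(p)|<\e$ for $n\ge n_0$, and continuity of $G$ on the compact $\P$ gives $\omega_G(\delta):=\sup\{|G(p)-G(q)|:p\in B(q,\delta)\}\to 0$ as $\delta\downarrow 0$. For any Borel $A\subset B(q,\delta)$ and $n\ge n_0$,
\[
e^{m_n(G(q)-\omega_G(\delta)-\e)}\pi_n(A)\;\le\;\int_A e^{m_n G_n(p)}\pi_n(dp)\;\le\; e^{m_n(G(q)+\omega_G(\delta)+\e)}\pi_n(A).
\]
Specialising $A=B(q,\delta)$ and applying the LDP upper bound for $\pi_n$ on closed sets, then $A=B^o(q,\delta)$ and the LDP lower bound for $\pi_n$ on open sets, yields
\[
\limsup_n \tfrac{1}{m_n}\ln\bp(B(q,\delta))\;\le\; G(q)+\omega_G(\delta)+\e-\inf_{B(q,\delta)}H-M,
\]
\[
\liminf_n \tfrac{1}{m_n}\ln\bp(B^o(q,\delta))\;\ge\; G(q)-\omega_G(\delta)-\e-\inf_{B^o(q,\delta)}H-M.
\]

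Now I let $\e\downarrow 0$ and $\delta\downarrow 0$. The term $\omega_G(\delta)\to 0$ by continuity, and $\inf_{B(q,\delta)}H$ and $\inf_{B^o(q,\delta)}H$ both tend to $H(q)$: the relative entropy $H=D(\ba\|\cdot)$ is lower semicontinuous (as noted in the proof of Theorem \ref{thm1a}, being a supremum of continuous functions via the Donsker--Varadhan representation), so $\inf_{B(q,\delta)}H\ge H(q)-\eta$ for $\delta$ small, while $H(q)$ is always an upper bound because $q$ lies in every such ball. Both extremes therefore collapse to $G(q)-H(q)-M=-I(q)$; since $B^o(q,\delta)\subset B(q,\delta)$ forces the $\liminf$ on the open to be $\le$ the $\limsup$ on the closed, the two limits must coincide and equal $-I(q)$.

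It remains to check that $I$ is a good rate function. By definition $I\ge 0$, and $I=M-G+H$ is lower semicontinuous as the sum of a continuous function and an lsc one. For any $\alpha\ge 0$, the sublevel set $\{I\le\alpha\}$ is closed and contained in $\{H\le \alpha-M+\|G\|_\infty\}$, which is compact since $H$ is good; hence $\{I\le\alpha\}$ is compact. Theorem \ref{thm-abi} now applies and delivers the claimed LDP for $\bp$. The main obstacle is the careful ordering of the three limits in $\e$, $\delta$, $n$ together with the lsc-based passage $\inf_{B(q,\delta)}H\to H(q)$; the rest is bookkeeping on top of the LDP for $\pi_n$ and Varadhan's lemma.
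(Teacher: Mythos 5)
Your proposal does not prove the statement it was assigned. Theorem \ref{thm-abi} is a general implication: \emph{given} that the ball limits $\lim_{\delta\to 0}\limsup_n m_n^{-1}\ln\bp(B(q,\delta))$ and $\lim_{\delta\to 0}\liminf_n m_n^{-1}\ln\bp(B^o(q,\delta))$ both equal $-I(q)$ for a good rate function $I$, deduce the full LDP, i.e.\ the upper bound $\limsup_n m_n^{-1}\ln\bp(C)\le -\inf_C I$ for closed $C$ and the lower bound $\liminf_n m_n^{-1}\ln\bp(E)\ge -\inf_E I$ for open $E$. What you wrote instead verifies the \emph{hypotheses} of Theorem \ref{thm-abi} for the particular tilted measures $\bp(dq)=Z_n^{-1}e^{m_n G_n(q)}\pi_n(dq)$, using Theorem \ref{thm:FengLDP}, Varadhan's lemma and the uniform convergence $G_n\to G$, and then invokes Theorem \ref{thm-abi} itself as the concluding step. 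That is essentially the paper's proof of Theorem \ref{thmFeng2}, not a proof of Theorem \ref{thm-abi}; with respect to the present task it is circular, since the conclusion of Theorem \ref{thm-abi} is never derived from its hypotheses. (In the paper this theorem carries no proof at all; it is imported verbatim from \cite{Feng}, Theorem B.6.)

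A genuine proof would run roughly as follows. The lower bound for an open set $E$ is immediate: for $q\in E$ choose $\delta$ with $B^o(q,\delta)\subset E$, so that $\liminf_n m_n^{-1}\ln\bp(E)\ge \liminf_n m_n^{-1}\ln\bp(B^o(q,\delta))$; letting $\delta\downarrow 0$ and then optimizing over $q\in E$ gives $\ge -\inf_E I$. For the upper bound, fix a closed set $C$ (compact, since $\X$ compact implies $\P$ compact) and $\eta>0$; for each $q\in C$ pick $\delta_q>0$ with $\limsup_n m_n^{-1}\ln\bp(B(q,\delta_q))\le -\min\{I(q),1/\eta\}+\eta$, extract a finite subcover $B^o(q_1,\delta_{q_1}),\dots,B^o(q_N,\delta_{q_N})$ of $C$, and use $\ln\sum_{i=1}^N a_i\le \ln N+\max_i\ln a_i$ to conclude $\limsup_n m_n^{-1}\ln\bp(C)\le -\min\{\inf_C I,1/\eta\}+\eta$; then let $\eta\downarrow 0$. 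None of this covering argument appears in your write-up, and it is the entire content of the theorem.
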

%-----------------------
\begin{proof}[Proof of Theorem~\ref{thmFeng2}]
% Clearly $G$ and $G_n$ are bounded since the space is compact.
Since $\sup_q |G_n(q)-G(q)|\to 0$, for every $\e_1>0$ there exists $n_1$ so that $|G_n(q)- G(q)|\leq \e_1$, whenever $n>n_1$. Fix $q$ and $\e_2$ and take $\delta>0$ so small $|G(q)-G(p)|\leq \e_2 $ for every $p\in B(q,\delta)=:B$. Estimate
\begin{align*}
\int_B e^{m_n G_n(p)}\pi_n(dp)\leq \int_B e^{m_n \big(G(p)+\e_1\big)}\pi_n(dp)\leq \int_B e^{m_n \big(G(p)+\e_1+\e_2\big)}\pi_n(dp)\leq e^{m_n \big( G(q)+\e_1+\e_2\big)}\pi_n(B).\end{align*}
Then by Theorem \ref{thm:FengLDP}
\[
\begin{split}
\lim\sup_n{1\over m_n}\ln \Big(\int_B e^{m_n G_n(p)}\pi_n(dp)\Big)
& \leq \big( G(q)+\e_1+\e_2\big)+\lim\sup_n \big( \ln \pi_n(B)\big)\\
& \leq  G(q)+\e_1+\e_2 -\inf_{p\in B}H(p).
\end{split}
\]
Similarly, with $B^o=B^o(q,\delta)$,
\[
\begin{split}
\lim\inf_n{1\over m_n}\ln \Big(\int_B e^{m_n G_n(p)}\pi_n(dp)\Big)
& \geq \big( G(q)-\e_1-\e_2\big)+\lim\inf_n \big( \ln \pi_n(B^o)\big)\\
& \geq  G(q)-\e_1-\e_2 -\inf_{p\in B^o}H(p).
\end{split}
\]
Since
$$Z_n= \int_\P e^{m_n G_n(p)}\pi_n(dp),$$
by Theorem \ref{thm:var},
$${1\over m_n} \ln Z_n\to \sup_p\big(G(p)-H(p)\big).$$
Therefore
\begin{align*}
&\lim\sup_n{1\over m_n}\ln \bp (B)\leq G(q)+\e_1+\e_2 -\inf_{p\in B}H(p)-\sup_p \big(G(p)-H(p)\big)\\
&\lim\inf_n{1\over m_n}\ln \bp (B^o)\geq G(q)-\e_1-\e_2 -\inf_{p\in B^o}H(p)-\sup_p \big(G(p)-H(p)\big)
\end{align*}
Let $\delta\to 0$. Then $\e_2(\delta)$ goes to 0 and $\lim_n -\inf_{p\in B}H(p)=-H(p)$, because $-H$ is upper semi-continuous and so
$$
\lim_{\delta\to 0}-\inf_{B} H(p)=-H(q)
$$
because  $-\inf_{p\in B(q,\delta_n)}H(p)\geq -H(q)$ and so, taking $\delta_n\to0$,
 $\lim\inf_n -\inf_{p\in B(q,\delta_n)}H(p)\geq -H(q)$, but when $p_n\Rightarrow q$ is such that
$\lim\sup_n -\inf_{p\in B(q,\delta_n)}H(p)=\lim\sup_n-H(p_n)$, then by USC, it holds $\lim\sup_n -\inf_{p\in B_n}H(p) \leq -H(q)$. The same holds when $B$ is replaced by $B^o$.
\\
Therefore
\begin{align*}
&\lim_{\delta\to 0}\lim\sup_n{1\over m_n}\ln \bp (B)\leq G(q)+\e_1 -H(q)-\sup_p \big(G(p)-H(p)\big)\\
&\lim_{\delta\to 0}\lim\inf_n{1\over m_n}\ln \bp (B^o)\geq G(q)-\e_1-H(q)-\sup_p \big(G(p)-H(p)\big)
\end{align*}
Since $\e_1$ was arbitrary, we see that the assumptions of Theorem \ref{thm-abi} hold, therefore $\bp$ satisfies the LDP with speed $m_n$ and rate function $I$.
\end{proof}
%-----------------
\subsubsection{Optimizing the function $F$}
\paragraph{The objective function $F$.} We now define
%-----------
$$F(q)=-\langle \phi, q\rangle - c  H(q),\quad q\in \P_1.$$
When $q\in \P_0$, then $\exists g={d q \over d \ba}$ and then
$$F(q)=-\int_\X \phi g d\ba+c \int_\X \ln g d\ba=- \langle \phi,q \rangle+c \int_\X \ln g d\ba=F(g).$$
%-------------------------------------------
%-------------------------------
\subsubsection{Maximizing $F$: \eqref{marta2} holds}
%---------------------------
\begin{lemma}\label{lemma1a}
Assume that \eqref{marta2} holds. Then $F$ has a unique maximizer over $\P_1$, which is the measure with density \eqref{tihedus4} with respect
to $\bar\alpha$.\end{lemma}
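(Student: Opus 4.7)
The plan is to follow the same two-step strategy used in the $\lambda=0$ analysis (Lemmas \ref{lemma1} and \ref{lemma3}): first compare the candidate $f$ with densities of measures in $\P_o$, then handle the remaining $q\in\P_1\setminus\P_o$. The key structural identity, obtained by rearranging \eqref{tihedus4}, is
\[
 \phi(x) \;=\; \frac{c}{f(x)} + (\theta-c),
\]
so that $\langle\phi,\rho\rangle = c\int f^{-1}\,d\rho + (\theta-c)\rho(\X)$ for every finite measure $\rho$; in particular $\langle\phi,f\rangle=\theta$, as recorded in Lemma \ref{lemma1bb}.

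For $q\in\P_o$ with density $g:=dq/d\ba$ one has $D(\ba\|q) = -\int\ln g\,d\ba$ and therefore $F(q) = -\int\phi g\,d\ba + c\int\ln g\,d\ba$. Starting from
\[
 F(f) - F(g) \;=\; \langle\phi,g-f\rangle + c\int\ln(f/g)\,d\ba,
\]
substituting the identity above and using $\int g\,d\ba = \int f\,d\ba = 1$ collapses the right-hand side to
\[
 F(f) - F(g) \;=\; c\int \bigl[\, g/f - 1 - \ln(g/f)\,\bigr]\,d\ba,
\]
which is nonnegative by the elementary inequality $x - 1 - \ln x \geq 0$ for $x>0$, with equality iff $g = f$ $\ba$-a.s.

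For $q\in\P_1\setminus\P_o$ I would adopt the decomposition already used in the proof of Lemma \ref{lemma3}: setting $h := d\ba/dq$ and $g := 1/h$ on $\{h>0\}$, we get $\beta_1 := \int g\,d\ba = q(h>0) \in (0,1)$ and $D(\ba\|q) = -\int\ln g\,d\ba$. Bounding the singular part of $q$ by $\phi \geq \phi_o$ gives
\[
 F(q) \;\leq\; -\int\phi g\,d\ba - \phi_o(1-\beta_1) + c\int\ln g\,d\ba,
\]
and substituting $\phi = c/f + (\theta-c)$ together with the same bookkeeping as above produces
\[
 F(f) - F(q) \;\geq\; c\int \bigl[\,g/f - 1 - \ln(g/f)\,\bigr]\,d\ba + (1-\beta_1)(\phi_o + c - \theta).
\]
Both terms are nonnegative: the first by the elementary inequality, the second because Lemma \ref{lemma1bb} places $\theta$ in $[\phi_o,\phi_o+c]$. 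Strict positivity follows because $\int g\,d\ba = \beta_1 < 1 = \int f\,d\ba$ precludes $g = f$ $\ba$-a.s.

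The main obstacle I foresee is the bookkeeping in the second step: the singular part of $q$ contributes exactly the extra term $(1-\beta_1)(\phi_o + c - \theta)$, and the estimate closes only because the interval bound $\theta \in [\phi_o, \phi_o+c]$ from Lemma \ref{lemma1bb} makes this term nonnegative, while the pointwise bound $\phi \geq \phi_o$ is what generates the term in the first place. The rest of the algebra is routine and, in fact, cleaner than in the $\lambda = 0$ proof because $\phi$ enters the objective linearly rather than through a logarithm.
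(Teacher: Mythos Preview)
Your argument is correct, and it takes a genuinely cleaner route than the paper's. Both proofs split into $\P_o$ versus $\P_1\setminus\P_o$ and both ultimately rest on the scalar inequality $x-1\ge \ln x$, but the mechanics differ. The paper first applies Jensen's inequality to $\int\ln(g/f)\,d\ba$, computes $\int g/f\,d\ba=(\theta'-\theta+c)/c$, and then invokes $x-1\ge\ln x$ at the level of the single number $(\theta'-\theta)/c$; in the $\P_1\setminus\P_o$ step this forces a separate treatment of the Jensen-equality case $g=\beta_1 f$. You instead exploit the structural identity $\phi=c/f+(\theta-c)$ to collapse $F(f)-F(g)$ to $c\int\bigl[g/f-1-\ln(g/f)\bigr]\,d\ba$, applying the same scalar inequality \emph{pointwise} before integrating. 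This bypasses Jensen entirely and, in the second step, produces the clean two-term lower bound with the extra contribution $(1-\beta_1)(\phi_o+c-\theta)$, whose nonnegativity is exactly the interval condition on $\theta$ from Lemma~\ref{lemma1bb}; strictness then follows in one line from $\beta_1<1$ rather than by case analysis. Your approach is more transparent here because $\phi$ enters $F$ linearly; the paper's Jensen route is the one that transfers verbatim to the $\lambda=0$ situation, where the objective involves $\ln\langle w,q\rangle$ and your pointwise collapse is not available.
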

%----------
\begin{proof}Let $f$ be the density \eqref{tihedus4}. Again, we split the maximization: over $\cal{F}$ and over $\P_1\setminus \P_0$. At first, we show that for every $g\in {\cal F}$ such that  $g\ne f$  $\ba$-a.s., it holds
\begin{equation}
F(f)-F(g)=- \theta + c\int_\X \ln f d\ba  + \theta' - c\int_\X \ln g d\ba = {\theta' - \theta} - c \int_\X \ln {g\over f} d\ba >0,
\end{equation}
where  $\theta'=\langle \phi,g \rangle$. When $\int_\X \ln g d\ba=-\infty$, then inequality strictly holds, otherwise observe that all integrals above are finite and since
$$\int_\X {g\over f} d\ba={\theta'-\theta+c\over c},$$
by Jensen inequality we get
$$- c \int_\X \ln {g\over f} d\ba > - \ln \Big[{\theta'-\theta+c\over c}\Big]^c,$$
where the strict inequality follows from assumption $f\neq g$ $\ba$ - a.s.. Therefore, it suffices to show that the L.H.S.~is nonnegative; to this aim, note that
$$
{\theta' -  \theta}-   \ln \Big[{\theta'-\theta+c\over c}\Big]^c=
c \Big (\frac{\theta' -  \theta}{c}-   \ln \Big[\frac{\theta'-\theta}{c}+1\Big] \Big )\geq 0.
%\quad \Leftrightarrow \quad {\theta'-\theta\over c}\geq \ln \big(1+{\theta'-\theta\over c}\big).
$$
% which holds since $x\ge \ln(1+x)$.
%-----------
We now take $q\in \P_1\setminus \P_o$. Let $h={d\ba \over dq}$, $g=1/h$, $\beta_1=\int_\X g d\ba \equiv q(h>0)$, $\theta'=\langle \phi,q \rangle\geq \langle \phi,g \rangle + \phi_o(1-\beta_1)$. As previously, we obtain via Jensen's inequality
\begin{equation}\label{nojah}
F(f)-F(q)=- \theta + c\int_\X \ln f d\ba  + \theta' - c\int_\X \ln g d\ba  \geq {\theta' - \theta} - c \ln \big({\langle \phi, g\rangle +(c-\theta)\beta_1\over c}\big)
\end{equation}
and it remains to show that
$$\ln \big({\langle \phi, g\rangle +(c-\theta)\beta_1 \over c}\big)\leq {\theta' - \theta\over c}.$$
Since $\langle \phi, g\rangle\leq \theta'-\phi_o(1-\beta_1)$ and $c-\theta \geq -\phi_o$  we obtain that
\begin{align*}\label{viimane}\ln \big({\langle \phi, g\rangle +(c-\theta)\beta_1 \over c}\big)&\leq \ln \big({\theta' +(c-\theta)\beta_1 -\phi_o(1-\beta_1)\over c} \big)\leq \ln \big({\theta' +(c-\theta)\beta_1 + (c-\theta)(1-\beta_1)\over c} \big)\\
&\leq \ln \big(1+{\theta'-\theta \over c}\big)
\leq  {\theta' - \theta\over c}.\end{align*}
Now it remains to argue that at least one of the inequalities is strict. The Jensen inequality is an equality only if $\beta_1f=g$ and in this case
$F(f)-F(q)=\theta'-\theta - c\ln \beta_1$. Since for $g=\beta_1 f$, it holds  $\theta'\geq \beta_1 \theta+\phi_o(1-\beta_1)$ we obtain that $\theta'-\theta \geq (1-\beta_1)(\phi_o-\theta)\geq -c(1-\beta_1)>c\ln \beta_1$, and so
$F(f)-F(q)=\theta'-\theta - c\ln \beta_1>0$.
\end{proof}
%-------------------------------------------
\subsubsection{Maximizing $F$: \eqref{marta2} fails}
Remember the definition of $q^*$ given in
equation~\eqref{q-def2}.
Now
\begin{equation}\label{eq:5.29.5}
\langle \phi, q^*\rangle = \langle \phi, f\rangle +(1-\beta)\phi_o=c+\phi_o\beta+(1-\beta)\phi_o=c+\phi_o=:\theta_o.
\end{equation}
Since \eqref{marta} fails then $\ba(x_o)=0$, which in turn implies $q^*\in \P_1$, and then $F(q^*)=-\phi_o+c\int_\X \ln f d \ba.$
\begin{lemma}\label{lemma2a} Let \eqref{marta2} fail, $x_o$ be the unique minimizer of $\phi$. Then for every $q\in \P_1$ such that $q\ne q^*$, it holds $F(q)<F(q^*)$.\end{lemma}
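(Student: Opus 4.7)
The plan is to mirror the proof of Lemma~\ref{lemma4} in the $\lambda=0$ regime, adapted to the linear objective $F(q)=-\langle\phi,q\rangle-cH(q)$. Fix $q\in\P_1$ and let $h:=d\ba/dq$, $g:=1/h$ on $\{h>0\}$ (and $0$ elsewhere), and $\beta_1:=q(h>0)=\int_\X g\,d\ba$. When $q\in\P_o$ then $\beta_1=1$; otherwise $\beta_1<1$, and both cases can be handled uniformly. Using $\ba(\{h=0\})=0$ and hence $H(q)=-\int_\X\ln g\,d\ba$, together with $\theta_o=c+\phi_o$ from \eqref{eq:5.29.5}, I would write
\[
F(q^*)-F(q)=\theta'-\theta_o-c\int_\X\ln(g/f)\,d\ba,\qquad \theta':=\langle\phi,q\rangle.
\]

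Next, to bound the integral, I would use the definition $f=c/(\phi-\phi_o)$ from \eqref{fo} to obtain the identity $\int_\X(g/f)\,d\ba=(\langle\phi,g\rangle-\phi_o\beta_1)/c$, combined with $\langle\phi,g\rangle\leq \theta'-\phi_o(1-\beta_1)$ (since $\phi\geq\phi_o$ everywhere, and the singular part of $q$ contributes at least $\phi_o(1-\beta_1)$ to $\theta'$). Applying Jensen's inequality with the probability measure $\ba$,
\[
c\int_\X\ln(g/f)\,d\ba\leq c\ln\!\Big(\frac{\theta'-\phi_o}{c}\Big),
\]
and setting $u:=(\theta'-\phi_o)/c$ reduces matters to the standard inequality $u-1\geq\ln u$, yielding
\[
F(q^*)-F(q)\geq c(u-1-\ln u)\geq 0.
\]

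The hard part will be promoting this to a strict inequality whenever $q\neq q^*$, by analysing the three places where slack enters: (i) Jensen is tight iff $g=kf$ $\ba$-a.s.\ for some constant $k>0$; (ii) the bound on $\langle\phi,g\rangle$ is tight iff the singular part of $q$ is concentrated on $\{\phi=\phi_o\}=\{x_o\}$, where the uniqueness of $x_o$ (part of the hypothesis) is essential; (iii) $u-1-\ln u=0$ iff $u=1$, i.e.\ $\theta'=c+\phi_o$. If all three equalities hold simultaneously, then $\int g\,d\ba=\beta_1$ forces $k=\beta_1/\beta$, the singular part equals $(1-\beta_1)\delta_{x_o}$, and the direct computation $\langle\phi,f\rangle=c+\beta\phi_o$ gives $\theta'=\beta_1 c/\beta+\phi_o$; combining with $\theta'=c+\phi_o$ yields $\beta_1=\beta$, whence $q$ has $\ba$-density $f$ plus atom $(1-\beta)\delta_{x_o}$, i.e.\ $q=q^*$. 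Hence $q\neq q^*$ forces strict inequality in at least one of (i)--(iii), giving $F(q^*)>F(q)$.
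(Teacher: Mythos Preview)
Your proof is correct and follows the same route as the paper's: the identity $\int_\X (g/f)\,d\ba=(\langle\phi,g\rangle-\phi_o\beta_1)/c$, the bound $\langle\phi,g\rangle\le\theta'-\phi_o(1-\beta_1)$, Jensen, and the elementary $u-1\ge\ln u$ are exactly the ingredients the paper uses. The only difference is organizational: the paper treats $q\in\P_o$ and $q\in\P_1\setminus\P_o$ separately and, in the Jensen-equality case, splits further into $\beta_1=\beta$ versus $\beta_1\neq\beta$ with direct computations, whereas you handle everything at once via the parameter $\beta_1\in(0,1]$ and the clean contrapositive ``equality in (i), (ii) and (iii) simultaneously forces $q=q^*$''---a tidier packaging of the same argument.
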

%------------------
\begin{proof} Again, we start with maximizing over $\F$. Let $g\in \F$. Then, by Jensen Inequality
\begin{align*}
F(q^*)-F(g)&=- \theta_o + c\int_\X \ln f d\ba  + \theta' - c\int_\X \ln g d\ba = {\theta' - (\phi_o+c)} - c \int_\X \ln {g\over f} d\ba \\
           &\geq  {\theta' - (\phi_o+c)} - c\ln \Big[{\theta'-\phi_o\over c}\Big]\geq 0,
\end{align*}
because for every $x$, $x-1\ge \ln x$. The Jensen inequality is an equality, when $\beta g=f$ and then $F(q^*)-F(g)=\theta'-\theta_o+c\ln \beta$. Then also $\theta'=\langle \phi,g \rangle=\beta^{-1} \langle \phi,f \rangle=c/\beta + \phi_o$ and, therefore, $$F(q^*)-F(g)=\theta'-c-\phi_o+c\ln \beta=c/\beta + \phi_o -c-\phi_o+c\ln \beta = c({1\over \beta}-1)-c\ln {1\over \beta}>0.$$
%-----------------------------------------------------------------------------
We now take $q\in \P_1\setminus \P_0$. Let, again, $h={d\ba \over dq}$, $g=1/h$, $\beta_1=\int_\X g d\ba$, $\theta'=\langle \phi,q \rangle\geq \langle \phi,g \rangle + \phi_o(1-\beta_1)$ and $\theta_o=\phi_o+c$. The inequality \eqref{nojah} now reads
\begin{equation}\label{nojah2}
F(q^*)-F(q)=- \theta_o + c\int_\X \ln f d\ba  + \theta' - c\int_\X \ln g d\ba  \geq {\theta' - \theta_o} - c \ln \big({\langle \phi, g\rangle -\phi_o\beta_1\over c}\big)
\end{equation}
and
$$  \ln \big({\langle \phi, g\rangle -\phi_o\beta_1\over c}\big)\leq \ln \big({\theta' - \phi_o(1-\beta_1)-\phi_o\beta_1\over c}\big)=\ln \big({\theta' - \phi_o\over c}\big)=\ln \big(1+{\theta'-\theta_o\over c}\big)\leq {\theta'-\theta_o\over c}.$$
The Jensen inequality is an equality when $g={\beta_1\over \beta}f$. When $\beta=\beta_1$, then $f=g$ and $F(q^*)-F(q)=\theta'-(\phi_o+c)$. Since $q\ne q^*$, it follows that $\theta'> \langle \phi, f \rangle + (1-\beta)\phi_o=c+\phi_o$ and so $F(q^*)>F(q)$ (analogously as in the proof of Lemma~\ref{lemma4}). Consider now the case $\beta_1\ne \beta$. Then
$$F(q^*)-F(q)=\theta'-(\phi_o+c)+c (\ln \beta-\ln \beta_1).$$ Since now $$\theta'\geq \beta_1/\beta \langle \phi,f \rangle +\phi_o(1-\beta_1)=\beta_1/\beta(c+\beta\phi_o)+\phi_o(1-\beta_1)={\beta_1\over \beta}c+\phi_o,$$
we obtain that $F(q^*)-F(q)\geq {\beta_1\over \beta}c+\phi_o-(\phi_o+c)-c\ln ({\beta_1\over \beta})=c({\beta_1\over \beta}-1)-c\ln ({\beta_1\over \beta})>0$.\end{proof}
%-------------------------
\subsubsection{Proof of the main theorem when $\lambda \in (0,1)$.}
%---
\begin{proof}[Proof of Theorem~\ref{thm1b}]
\begin{enumerate}
\item
The proof $\bp\Rightarrow \delta_{q^*}$ is exactly as in Theorem \ref{thm1a}, just instead of Theorem \ref{thmFeng}, Theorem \ref{thmFeng2} should be used.
% \textcolor{blue}{{\bf Daniela and Fabio:} Please check, whether it goes indeed through without any changes}.
\item
Note that $w_n(x)\to 1$ and
$\sup_x|w_n(x)-1|=1-\exp[-{{\phi}_o\over n^{\lambda}}]\to 0$,
 since $\phi$ is continuous $\phi$ and $\X$ compact.
% $\X$ is bounded, i.e. $\sup_x \phi(x)=\bar{\phi}<\infty$, then
 %Since $w\equiv 1$, then $r_{q}=q$ and a
All assumptions of Corollary \ref{cor1} are fulfilled with $r_{q}=q$.
 By Theorem \ref{thmP}, the limit  process $X_1,X_2,\ldots$, exists, and it is an i.i.d. process, where $X_i\sim q^*$.
\item By Theorem \ref{thmQ}, $Q_n\Rightarrow \delta_{q^*}$
\end{enumerate}
\end{proof}

\end{document}